\providecommand{\keywords}[1]{\textbf{\textit{Key words and phrases }} #1}
\providecommand{\subjclass}[1]{\textbf{\textit{2010 Mathematics Subject Classification.}} #1}
\theoremstyle{definition}
\newtheorem{theo}{Theorem}[subsection]
\newtheorem{theore}{Theorem}[section]
\newtheorem{pr}[theo]{Proposition}
 \newtheorem{coro}[theo]{Corollary}
\theoremstyle{remark}
\newtheorem{rema}[theo]{Remark}
\newtheorem{rrema}[theore]{Remark}
\theoremstyle{definition}
\newtheorem{defi}[theo]{Definition}
\numberwithin{equation}{subsection}
\newcommand\cu{\underline{C}}
\newcommand\du{\underline{D}}
\newcommand\eu{\underline{E}}
\newcommand\au{\underline{A}}
\newcommand\bu{\underline{B}}
\newcommand\hu{\underline{H}}
\newcommand\obj{\operatorname{Obj}}
\newcommand\id{\operatorname{id}}
\DeclareMathOperator\adfu{\operatorname{AddFun}}
\DeclareMathOperator\adfur{\operatorname{Fun}_R}
\DeclareMathOperator\kar{\operatorname{Kar}}
 \DeclareMathOperator\ke{\operatorname{Ker}}
 \DeclareMathOperator\cok{\operatorname{Coker}}
\DeclareMathOperator\imm{\operatorname{Im}}
\DeclareMathOperator\co{\operatorname{Cone}}
\DeclareMathOperator\inli{\varinjlim}
\newcommand\hw{{\underline{Hw}}}
\newcommand\hrt{{\underline{Ht}}}
\newcommand\alz{{\aleph_0}}
\newcommand\alo{{\aleph_1}}
\newcommand\wstu{w^{st}}
\newcommand\spe{\operatorname{Spec}}
\newcommand\modd{\operatorname{Mod}}
\newcommand\q{{\mathbb{Q}}}
\newcommand\p{\mathbb{P}}
\newcommand\z{{\mathbb{Z}}}
 \newcommand\lan{\langle}
\newcommand\ra{\rangle}
\newcommand\al{\alpha}
\newcommand\be{\beta}
\newcommand\ns{\{0\}}
\newcommand\ab{\operatorname{Ab}}
\newcommand\cp{\mathcal{P}}
\newcommand\perpp{{}^{\perp}}
\newcommand\opp{^{op}}
\newcommand\kmp{\mathbb{K}_m(\operatorname{Proj}}
\newcommand\ccu{\underline{\tilde{\mathcal{C}}}} 
\newcommand\wu{\tilde{w}}
\newcommand\hwu{{\underline{H\tilde{w}}}}
\newcommand\mmodd{\operatorname{mod}}
\begin{document}

\title{From weight structures to (orthogonal) $t$-structures and back}
\author{Mikhail V. Bondarko
   \thanks{ 
 The main results of the paper were  obtained under support of the Russian Science Foundation grant no. 16-11-
00200.}}\maketitle
\begin{abstract} 
In this paper we study  $t$-structures that are "closely related" to weight structures (on a triangulated category $\cu$).
 A $t$-structure couple $t=(\cu_{t\le 0},\cu_{t\ge 0})$ is said to be right adjacent to a  weight structure $w=(\cu_{w\le 0}, \cu_{w\ge 0})$ if $\cu_{t\ge 0}=\cu_{w\ge 0}$; if this is the case then $t$ can be uniquely recovered from $w$ and vice versa. We prove that 
 if $\cu$ satisfies the Brown representability property (one may say that this is the case for any "reasonable" triangulated category closed with respect to coproducts) then $t$ that is right adjacent to $w$ exists if and only if $w$ is {\it smashing} (i.e., coproducts respect weight decompositions); in this case the heart $\hrt$ is the category of those functors $\hw\opp\to \ab$ that respect products (here $\hw$ is the heart of $w$). Certainly, the dual to this statement is valid is well, and we discuss its relationship to 
  results of B. Keller and P. Nicolas.

We also prove several generalizations and modifications of this result. In particular, we prove that a right adjacent $t$ exists whenever $w$ is a bounded weight structure on a {\it saturated} $R$-linear category $\cu$ (for a noetherian 
  ring $R$). Moreover,  we obtain  1-to-1 correspondences between bounded weights structures on $\cu$ and the classes of those bounded $t$-structures on it such that $\hrt$ has either enough projectives or injectives whenever $\cu$ equals the  derived category of perfect complexes $D^{perf}(X)$ 
 for $X$ that is regular and  proper over $\spe R$.

Furthermore, we generalize 
 the aforementioned existence statement to construct (under certain assumptions) a $t$-structure $t$ on a triangulated category $\cu'$ 
 that is {\it right orthogonal} to $w$; here  $\cu$ and $\cu'$ are subcategories of a common triangulated category $\du$. In particular, if  $X$ is proper over $\spe R$ but not necessarily regular then one can take $\cu=D^{perf}(X)$, $\cu'=D^b_{coh}(X)$ or $\cu'=D^-_{coh}(X)$, and $\du=D_{qc}(X)$.  
We 
 also study hearts of orthogonal $t$-structures and their restrictions, and prove some statements on "reconstructing" weight structures from orthogonal $t$-structures. 

The main tool of this paper is the notion virtual $t$-truncations of (cohomological) functors; these are defined in terms of weight structures and "behave as if they come from $t$-truncations of representing objects" whether $t$ exists or not.
\end{abstract}
\subjclass{Primary 18E30;  Secondary 18E40, 18F20, 18G05, 18E10, 16E65.}

\keywords{Triangulated category, weight structure, $t$-structure, virtual $t$-truncation, pure functor, coherent sheaves, perfect complexes.}

\tableofcontents
 \section*{Introduction}
This paper is dedicated to the study of those $t$-structures that are "closely related" to weight structures (on various triangulated categories). 

Let us 
 recall a bit of history.  $t$-structures on triangulated categories have become important tools in homological algebra since their introduction in \cite{bbd}. Respectively, their study and construction is an actual and non-trivial question.
 Next, in \cite{konk} and \cite{bws} a rather similar notion of a weight structure $w$ on a triangulated category $\cu$ was introduced.
Moreover, in ibid. a $t$-structure $t=(\cu_{t\le 0}, \cu_{t\ge 0})$  was said to be {\it adjacent} to $w$ if $\cu_{t\ge 0}=\cu_{w\ge 0}$, and certain examples of adjacent structures were constructed.\footnote{In contrast to ibid. and \cite{bger}, in the current paper we use the so-called homological convention for $t$ and $w$, and say that $t$ is right adjacent to $w$ if $\cu_{t\ge 0}=\cu_{w\ge 0}$. Besides, D. Paukstello and several other authors use the term "co-$t$-structure" instead of "weight structure".} 
Furthermore, in \cite{bger} 
 for a $t$-structure $t$ on a triangulated category $\cu'$ that is related to $\cu$ by means of  a {\it duality} bi-functor 
 a more general notion of {\it (left) orthogonality} of  a weight structure $w$ on $\cu$  to $t$ was introduced.\footnote{These definitions are contained in Definition \ref{ddual} below; however, our main examples have motivated us to concentrate on the particular case where $\cu$ and $\cu'$ are subcategories of a common triangulated category $\du$ for most of this paper.}
Also, the relationship between the hearts of adjacent and orthogonal structures was studied in detail. 

Next, if $w$ is (left or right) adjacent to $t$ then it determines $t$ uniquely and vice versa. 
 Yet the only previously existing way of constructing $t$ if $w$ is given was to 
 use certain "nice generators" of $w$ (see Definition \ref{dcomp}(\ref{dgenw}), \S4.5 of \cite{bws}, Theorem 3.2.2 of \cite{bwcp}, and Proposition 5.3.1 of \cite{bpgws}). However, already in \cite{bws} the notion of virtual $t$-truncations for (co)homological functors was introduced, and it was proved that virtual $t$-truncations possess several nice properties. In particular, it was demonstrated that these are closely related to $t$-structures (whence the name) even though they are defined in terms of weight structures only. 

In the current paper we propose a new construction method. We prove that adjacent and orthogonal $t$-structures can be constructed using virtual $t$-truncations whenever certain "Brown representability-type" assumptions on $\cu$ (and $\cu'$) are known.
 Respectively, our results yield the existence of some 
 new families of $t$-structures. 

Let us formulate one of these results. For a triangulated category $\cu$ that is {\it smashing}, i.e., 
 closed with respect to (small) coproducts, and a weight structure $w$ on it we will say that $w$ is {\it smashing} whenever $\cu_{w\ge 0}$ is closed with respect to $\cu$-coproducts (note that $\cu_{w\le 0}$ is $\coprod$-closed automatically).

\begin{theore}[See Theorem \ref{tsmash}(I)]\label{tadjti}
Let $\cu$ be a smashing triangulated category  
 that satisfies the following Brown representability property: any functor $\cu\opp\to \ab$ that respects ($\cu\opp$)-products is representable. 

Then for  a weight structure $w$  on $\cu$ there exists a $t$-structure $t$ right adjacent to it if and only if $w$ is smashing. Moreover, the heart of $t$ (if $t$ exists) is equivalent to the category of all those additive functors $\hw\opp\to \ab$ that respect products.\footnote{Here $\hw$ is the heart of $w$; note also that $G: \hw\opp\to \ab$ respects products whenever it converts $\hw$-coproducts into products of groups.}\end{theore}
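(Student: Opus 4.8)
The plan is to treat the two implications separately and then describe the heart, letting virtual $t$-truncations together with the Brown representability hypothesis carry the "if" direction.

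\emph{Necessity.} Suppose first that a $t$-structure $t$ right adjacent to $w$ exists, so that $\cu_{w\ge 0}=\cu_{t\ge 0}$. I would invoke the standard description of the coaisle of a $t$-structure as a left orthogonal, $\cu_{t\ge 0}={}^{\perp}\cu_{t\le -1}$: the inclusion $\subseteq$ is the orthogonality axiom shifted by $[-1]$, and for the reverse inclusion one takes the $t$-decomposition $A\to M\to B$ of an $M\in{}^{\perp}\cu_{t\le -1}$ with $A\in\cu_{t\ge 0}$, $B\in\cu_{t\le -1}$, notes that the map $M\to B$ vanishes, and concludes $B\in\cu_{t\ge 0}\cap\cu_{t\le -1}=0$. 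Since $\cu(\coprod_i M_i,Y)=\prod_i\cu(M_i,Y)$, any left orthogonal class is automatically closed under coproducts; hence $\cu_{w\ge 0}$ is, i.e. $w$ is smashing.

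\emph{Sufficiency.} Assume $w$ smashing and set $\cu_{t\ge 0}:=\cu_{w\ge 0}$, $\cu_{t\le 0}:=\cu_{w\ge 1}^{\perp}$. The retract and shift axioms, and the orthogonality $\cu(\cu_{t\ge 1},\cu_{t\le 0})=0$, hold by construction (using $\cu_{t\ge 1}=\cu_{w\ge 1}$); the only substantial point is the existence of $t$-decompositions. Given $M$, consider $\cu(-,M)\colon\cu\opp\to\ab$ and its virtual $t$-truncation $\tau^{\le 0}\cu(-,M)$. The crucial input is that virtual $t$-truncations of a product-respecting functor are again product-respecting \emph{provided $w$ is smashing}: one computes them from a weight decomposition of $\coprod_i M_i$ taken as the coproduct of weight decompositions of the $M_i$ (legitimate precisely because both $\cu_{w\le 0}$ and $\cu_{w\ge 1}$ are now $\coprod$-closed), and images commute with products in $\ab$. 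Thus $\tau^{\le 0}\cu(-,M)$ respects products, so by Brown representability it is represented by some $RM$, the canonical transformation $\cu(-,M)\to\tau^{\le 0}\cu(-,M)$ corresponding by Yoneda to a morphism $\eta\colon M\to RM$.

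I would then complete $\eta$ to a triangle $LM\to M\xrightarrow{\eta} RM\to LM[1]$ and verify it is a $t$-decomposition. That $RM\in\cu_{t\le 0}=\cu_{w\ge 1}^{\perp}$ is immediate from the definition of the truncation: for $N\in\cu_{w\ge 1}$ one uses the trivial weight decomposition with $w_{\le 0}N=0$, whence $\cu(N,RM)=\tau^{\le 0}\cu(-,M)(N)=0$. Conversely, for $N\in\cu_{w\le 0}$ the trivial decomposition $w_{\ge 1}N=0$ shows that $\eta$ induces an isomorphism $\cu(N,M)\xrightarrow{\sim}\cu(N,RM)$ (both sides equal $\tau^{\le 0}\cu(-,M)(N)=\cu(N,M)$, with the transformation the identity there); applying the same to $N[1]\in\cu_{w\le 0}$ and feeding both isomorphisms into the long exact sequence obtained from $\cu(N,-)$ and the triangle forces $\cu(N,LM)=0$. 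Hence $LM\in{}^{\perp}\cu_{w\le 0}=\cu_{w\ge 1}=\cu_{t\ge 1}$, the triangle is the desired decomposition, and $t=(\cu_{w\ge 1}^{\perp},\cu_{w\ge 0})$ is a $t$-structure right adjacent to $w$.

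\emph{The heart, and the main obstacle.} Finally I would study $\hrt=\cu_{w\ge 0}\cap\cu_{w\ge 1}^{\perp}$ through the restriction functor $\Phi\colon\hrt\to\adfu(\hw\opp,\ab)$, $\Phi(H)=\cu(-,H)|_{\hw}$. As $\hw$ is $\coprod$-closed in $\cu$ (smashing again) and $\cu(\coprod X_i,H)=\prod\cu(X_i,H)$, each $\Phi(H)$ respects products, so $\Phi$ targets the asserted category. Full faithfulness and essential surjectivity I would extract from the formalism of virtual $t$-truncations and pure functors: the inverse sends a product-respecting $F\colon\hw\opp\to\ab$ to the object representing the cohomological, product-respecting extension of $F$ along the weight complex functor (Brown representability supplying the representing object), followed by a $t$-truncation into $\hrt$, after which one checks that restriction to $\hw$ returns $F$. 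I expect the genuine difficulty to lie exactly in the compatibility issues underlying the last two paragraphs — that the purely $w$-theoretic functor $\tau^{\le 0}$ is compatible with cones, so that its representing object really sits in a distinguished triangle with the prescribed orthogonality, and that the weight-complex extension of $F$ is well defined, product-respecting and representable; granting these, everything else reduces to the elementary manipulations with trivial weight decompositions indicated above.
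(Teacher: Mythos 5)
Your proposal follows essentially the same route as the paper: the ``only if'' implication is the same easy orthogonality observation, and for the ``if'' implication you represent a virtual $t$-truncation of $H_M=\cu(-,M)$ via Brown representability, using precisely the paper's key lemma that virtual $t$-truncations of cp functors remain cp when $w$ is smashing (Proposition \ref{psmash}(\ref{icopr6p})). The paper packages the remaining diagram chase into its general orthogonality machinery (Theorem \ref{trefl} and Proposition \ref{padjpr}, which represent $\tau_{\ge 0}H_M$ and take a cone, where you represent $\tau_{\le 0}H_M$ and take a fibre --- an immaterial difference), and, like you, it obtains the heart from the pure-functor formalism (Proposition \ref{pwrange}(\ref{iwrpure}) together with Proposition 2.3.2(8) of \cite{bwcp}); so your citation-level treatment of the heart matches the paper's own. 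Note only that the $t$-truncation step in your inverse construction is superfluous: an object representing a product-respecting functor of weight range $[0,0]$ automatically lies in $\cu_{w\ge 1}^{\perp}\cap\cu_{w\le -1}^{\perp}=\cu_{t=0}$.

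However, one step of your verification is wrong as written, though repairable. In the paper's homological convention, axiom (ii) of Definition \ref{dwstr} gives $\cu_{w\le 0}\subset\cu_{w\le 0}[1]$, so $\cu_{w\le 0}$ is closed under $[-1]$ but \emph{not} under $[1]$: for $N\in\cu_{w\le 0}$ one only has $N[1]\in\cu_{w\le 1}$, and the map $\cu(N[1],M)\to\cu(N[1],RM)$ need not be bijective, contrary to your claim that one can ``apply the same'' isomorphism statement to $N[1]$. Fortunately only its surjectivity is needed, and that does hold: choosing $w_{\le 1}N'=N'$ for $N'\in\cu_{w\le 1}$ (legitimate, since virtual $t$-truncations are independent of choices by Proposition \ref{pwfil}(1)), the transformation $H_M\to\tau_{\le 0}H_M$ evaluated at $N'$ becomes the corestriction of $\cu(N',M)\to\cu(w_{\le 0}N',M)$ onto its image, hence an epimorphism. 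Feeding surjectivity at $N[1]$ and injectivity at $N$ into the exact sequence $\cu(N[1],M)\to\cu(N[1],RM)\to\cu(N,LM)\to\cu(N,M)\to\cu(N,RM)$ still gives $\cu(N,LM)=0$, i.e., $LM\in(\cu_{w\le 0})^{\perp}=\cu_{w\ge 1}$ (the orthogonal here is a right one, not the left orthogonal you wrote, but that is evidently a typo). A last indexing remark: your triangle has $LM\in\cu_{t\ge 1}$ and $RM\in\cu_{t\le 0}$, which is the decomposition of Definition \ref{dtstr}(iv) shifted by one; applying your construction to $M[1]$ and shifting by $[-1]$ yields the normalization the axiom requires.
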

Note here that (smashing) triangulated categories satisfying the Brown representability property 
  have recently become very popular in homological algebra and found applications in various areas of mathematics (thanks to the foundational results of A. Neeman and others); in particular, this property holds if either $\cu$ or $\cu\opp$ is compactly generated. 
Moreover, it is easy to construct vast families of smashing weight structures on $\cu$ (at least) if $\cu$ is compactly generated; see Remark \ref{rsmashex}(1) below. The resulting adjacent $t$-structures are {\it cosmashing} (i.e. $\cu_{t\le 0}$ is closed with respect to $\cu$-products); thus there appears to be no way to construct them using previously known methods.

Certainly the dual to Theorem \ref{tadjti} is valid as well. Moreover, if $\cu\opp$ satisfies the dual Brown representability property and $w$ is both cosmashing and smashing then the left adjacent $t$-structure $t$ (i.e., $\cu_{t\le 0}=\cu_{w\le 0}$) restricts to the subcategory of compact objects of $\cu$ as well as to all other "levels of smallness" for objects. Combining this statement with an existence of weight structures Theorem 3.1 of \cite{kellerw} we obtain a statement on $t$-structures extending  Theorem 7.1 of ibid.  

We also prove certain alternative versions of Theorem \ref{tadjti} that can be applied to "quite small" triangulated categories.
 Instead of the Brown representability condition for $\cu$ one can demand it to satisfy the {\it $R$-saturatedness} one instead (see Definition \ref{dsatur}(2) below; this is an  "$R$-linear finite" version of the Brown representability). Then for any {\it bounded} $w$ on $\cu$ there will exist a $t$-structure right adjacent to it. 
According to 
a saturatedness statement from \cite{neesat} 
 this result can be applied to the  derived category $D^{perf}(X)$  
of perfect complexes on a regular scheme 
  that is proper over the spectrum of a Noetherian ring $R$ 
   (see Proposition \ref{pnee1}(2)). In this case we obtain  1-to-1 correspondences between bounded weights structures on $\cu$ and the classes of those bounded $t$-structures on it such that the heart $\hrt$ of $t$ has either enough projectives or injectives; see Remark \ref{rtem1}(\ref{irecw}).
	
	 Moreover, we prove some generalizations of this "$R$-saturated" existence result  (see Proposition \ref{pnee1}(1) and Theorem \ref{tsatur}); they produce orthogonal $t$-structures 
 on bounded and bounded above derived categories of coherent sheaves  over $X$ for $X$ that is not necessarily regular.

 Furthermore, we study the question when a (fixed) $t$-structure $t$ on a triangulated category $\cu'$ is right orthogonal  to a weight structure $w$ on a certain category $\cu$. 
If $\cu$ is a triangulated subcategory of $\cu'$ 
and $w$ that is left orthogonal to $t$ exists then 
 $\hrt$ has enough projectives, and in certain cases this property of $\hrt$ is sufficient for the existence of $w$; see Theorem \ref{twfromt}. Under some other assumptions we prove the existence of a weight structure $w$ (on a triangulated category $\cu$ that is "large enough") that is {\it (strictly)} left orthogonal to $t$ 
 with $\cu$ not being a subcategory of $\cu'$. However, these assumptions on the heart make these result 
 more difficult to apply than the aforementioned "converse" ones, and the methods of their proofs are less interesting.

\begin{rrema}\label{rneetgen}
The author certainly does not claim that the methods of the current paper are the most general among the existing methods for constructing $t$-structures. In particular, if $\cu$ is a {\it well generated} triangulated category (in particular, this is the case if $\cu$ is  compactly generated or possesses a combinatorial Quillen model; see Proposition 6.10 of \cite{rosibr})  
 then the recent Theorem 2.3 of \cite{neetsgen} gives
all those $t$-structures that are generated by sets of objects of $\cu$ in the sense of Definition \ref{dcomp}(\ref{dgent}) below; this statement essentially vastly generalizes the well-known Theorem A.1 of \cite{talosa}.

Now, 
if $\cu$ is well generated then is generated by a set of its objects as its own localizing subcategory  (see Definition \ref{dcomp}(\ref{dlocal})). 
 Thus for any smashing weight structure $w$ on it Proposition 2.3.2(10) of \cite{bwcp} essentially says the following: there exists a set $\cp\subset \cu_{w\ge 0}$ such that the class $\cu_{w\ge 0}$ is the  smallest cocomplete pre-aisle that is "generated" by 
  $\cp$ in the sense of \cite[\S0]{neetsgen} (cf. Discussion 1.15 of ibid.). 
Hence Theorem 2.3 of \cite{neetsgen} says that there exists a $t$-structure on $\cu$ such that $\cu_{t\ge 0}=\cu_{w\ge 0}$ (see Remark \ref{rtst}(4) below). Hence loc. cit. generalizes the existence of $t$ part of our Theorem \ref{tadjti}.

On the other hand, note that neither loc. cit. nor Theorem A.1 of \cite{talosa} says anything on the hearts of $t$-structures. Also,  there appears to be no chance to extend these existence results to $R$-saturated categories (in any way). 
\end{rrema}

Let us  now describe the contents  of the paper. Some more information of this sort may be found in the beginnings of sections. 

In \S\ref{swt} we give some 
  definitions and conventions, and recall some basics on $t$-structures and weight structures.

In \S\ref{sortvtt} we 
 define  virtual $t$-truncations of functors and prove several nice properties for them. We also relate the existence of orthogonal $t$-structures to virtual $t$-truncations; this gives general if and only if criteria for the existence of adjacent $t$-structures. 

In  \S\ref{smashb} we study smashing triangulated categories along with the existence of $t$-structures adjacent to weight structures on them and certain restrictions of these $t$-structures. We also consider weight structures extended from subcategories of compact objects. 

In \S\ref{scoh} we study adjacent and orthogonal $t$-structures on $R$-linear triangulated categories; this includes $R$-saturated categories and various derived categories of (quasi)coherent sheaves.

In \S\ref{sortw} we try to answer the question whether the results of  previous sections give all $t$-structures that are adjacent to weight structures on the corresponding categories. So we study  criteria ensuring the existence of a weight structure
 that is left adjacent or orthogonal  to a given $t$-structure $t$; under certain assumptions we prove that the answer to our question is affirmative. 

The author is deeply grateful to prof. A. Neeman for calling his attention to \cite{kellerw} as well as for writing his extremely interesting texts that are crucial for the current paper, and also to prof. L. Positselski for an online lesson on the coherence of rings. 

\section{A reminder on weight structures and $t$-structures}\label{swt}

In this section we recall the notions of $t$-structures and weight structures, along with orthogonality and adjacency for them. 

In \S\ref{snotata}  we introduce some categorical notation and recall some basics on $t$-structures. 

In \S\ref{sws} we recall some of the theory of weight structures.

In \S\ref{sadj} we recall the definitions of adjacent and orthogonal weight and $t$-structures that are central for this paper.

\subsection{Some categorical and $t$-structure notation}\label{snotata}

\begin{itemize}

\item All products and coproducts in this paper will be small.

\item Given a category $C$ and  $X,Y\in\obj C$  we will write
$C(X,Y)$ for  the set of morphisms from $X$ to $Y$ in $C$.

\item For categories $C',C$ we write $C'\subset C$ if $C'$ is a full 
subcategory of $C$.

\item Given a category $C$ and  $X,Y\in\obj C$, we say that $X$ is a {\it
retract} of $Y$ 
 if $\id_X$ can be 
 factored through $Y$.\footnote{Certainly,  if $C$ is triangulated or abelian, 
then $X$ is a retract of $Y$ if and only if $X$ is its direct summand.}\ 

\item A 
 (not necessarily additive) subcategory $\hu$ of an additive category $C$ 
is said to be {\it retraction-closed} in $C$ if it contains all retracts of its objects in $C$.

\item  For any $(C,\hu)$ as above the full subcategory $\kar_{C}(\hu)$ of 
 $C$ whose objects
are all retracts of 
 (finite) direct sums of objects 
$\hu$ in $C$ will be called the {\it Karoubi-closure} of $\hu$ in $C$; note that this subcategory is obviously additive and retraction-closed in $C$. 

\item  We will say that $C$ is {\it Karoubian} if any idempotent morphism yields a direct sum decomposition in it. 


\item The symbol $\cu$ below will always denote some triangulated category;  it will often be endowed with a weight structure $w$. The symbols $\cu'$ and $\du$ will  also be used  for triangulated categories only.

\item For any  $A,B,C \in \obj\cu$ we will say that $C$ is an {\it extension} of $B$ by $A$ if there exists a distinguished triangle $A \to C \to B \to A[1]$.

\item A class $\cp\subset \obj \cu$ is said to be  {\it extension-closed}
    if it 
		is closed with respect to extensions and contains $0$.  

\item 
We will write $\lan \cp\ra$ for the smallest  full retraction-closed triangulated subcategory of $\cu$ containing $\cp$; we will  call  $\lan \cp\ra$  the triangulated subcategory {\it densely generated} by $\cp$ (in particular, in the case $\cu=\lan \cp \ra$).

Moreover, the smallest  {\bf strict}  full triangulated subcategory of $\cu$ containing $\cp$ will be called the subcategory {\it strongly generated} by $\cp$.

\item The smallest additive retraction-closed extension-closed class of objects of $\cu$ containing   $\cp$  will be called the {\it 
envelope} of $\cp$.  

\item For $X,Y\in \obj \cu$ we will write $X\perp Y$ if $\cu(X,Y)=\ns$. 

For
$D,E\subset \obj \cu$ we write $D\perp E$ if $X\perp Y$ for all $X\in D,\
Y\in E$.

Given $D\subset\obj \cu$ we  will write $D^\perp$ for the class
$$\{Y\in \obj \cu:\ X\perp Y\ \forall X\in D\}.$$
Dually, ${}^\perp{}D$ is the class
$\{Y\in \obj \cu:\ Y\perp X\ \forall X\in D\}$.

\item Given $f\in\cu (X,Y)$, where $X,Y\in\obj\cu$, we will call the third vertex
of (any) distinguished triangle $X\stackrel{f}{\to}Y\to Z$ a {\it cone} of
$f$.\footnote{Recall 
that different choices of cones are connected by non-unique isomorphisms.}\

\item Below $\au$ will always  denote some abelian category; $\bu$ is an additive category.

\item 
All complexes in this paper will be cohomological.

We will write $K(\bu)$ for the homotopy category
of 
 complexes over $\bu$. Its full subcategory of bounded complexes will be denoted by $K^b(\bu)$. We will write $M=(M^i)$ if $M^i$ are the terms of the complex $M$ (in the cohomological indexing).

	\item We will say that an additive covariant (resp. contravariant) functor from $\cu$ into $\au$ is {\it homological} (resp. {\it cohomological}) if it converts distinguished triangles into long exact sequences.
	
	For a (co)homological functor $H$ and $i\in\z$ we will write $H_i$ (resp. $H^i$) for the composition $H\circ [-i]$. 

\end{itemize}

Let us now recall the notion of  a $t$-structure (mainly to fix  notation). 

\begin{defi}\label{dtstr}

A couple of subclasses  $\cu_{t\le 0},\cu_{t\ge 0}\subset\obj \cu$ will be said to be a
$t$-structure $t$ on $\cu$  if 
they satisfy the following conditions:

(i) $\cu_{t\le 0}$ and $\cu_{t\ge 0}$  are strict, i.e., contain all
objects of $\cu$ isomorphic to their elements.

(ii) $\cu_{t\le 0}\subset \cu_{t\le 0}[1]$ and $\cu_{t\ge 0}[1]\subset \cu_{t\ge 0}$.

(iii)  $\cu_{t\ge 0}[1]\perp \cu_{t\le 0}$.

(iv) For any $M\in\obj \cu$ there exists a  {\it $t$-decomposition} distinguished triangle
\begin{equation}\label{tdec}
L_tM\to M\to R_tM{\to} L_tM[1]
\end{equation} such that $L_tM\in \cu_{t\ge 0}, R_tM\in \cu_{t\le 0}[-1]$.

2. $\hrt$ is the full subcategory of $\cu$ whose object class is $\cu_{t=0}=\cu_{t\le 0}\cap \cu_{t\ge 0}$.
\end{defi}

We will also give some auxiliary definitions.

\begin{defi}\label{dtstro}
1. For any $i\in \z$ we will use the notation $\cu_{t\le i}$ (resp. $\cu_{t\ge i}$) for the class $\cu_{t\le 0}[i]$ (resp. $\cu_{t\ge 0}[i]$). 

2. $\hrt$ is the full subcategory of $\cu$ whose object class is $\cu_{t=0}=\cu_{t\le 0}\cap \cu_{t\ge 0}$.

3. We will say that $t$ is {\it left (resp. right) non-degenerate} if $\cap_{i\in \z}\cu_{t\ge i}=\ns$ (resp. $\cap_{i\in \z}\cu_{t\le i}=\ns$). 

Moreover, we will say that $t$ is {\it non-degenerate} if it is both left and right non-degenerate. 

4. We will say that $t$ is {\it bounded below} if $\cup_{i\in \z}\cu_{t\ge i}=\obj\cu$.

Moreover, we will  say that $t$ is {\it bounded} if the equality $\cup_{i\in \z}\cu_{t\le i}=\obj\cu$ is valid as well.

5. Let $\du$ be a full triangulated subcategory of $\cu$.

We will say that $t$ {\it restricts} to $\du$ whenever the couple $t_{\du}= (\cu_{t\le 0}\cap \obj \du,\ \cu_{t\ge 0}\cap \obj \du)$ is a $t$-structure on $\du$.
\end{defi}

 \begin{rema}\label{rtst}
Let us recall some well-known properties of $t$-structures (cf. \S1.3 of \cite{bbd}).

1. 
The triangle (\ref{tdec}) is canonically and functorially determined by $M$. Moreover, $L_t$ is right adjoint to the embedding $ \cu_{t\ge 0}\to \cu$ (if we consider $ \cu_{t\ge 0}$ as a full subcategory of $\cu$) and $R_t$ is left adjoint to  the embedding $ \cu_{t\le -1}\to \cu$; respectively, $L_t$ and $R_t$ are  connected with $\id_{\cu}$ by means of canonical natural transformations. 

For any $n\in \z$ we will use the notation $t_{\ge n}$ for the functor $[-n]\circ L_t\circ [n]$, and $t_{\le n}=[-n-1]\circ L_t\circ [n+1]$.


2. 
$\hrt$ is 
  an abelian category with short exact sequences corresponding to distinguished triangles in $\cu$.

Moreover, have a canonical isomorphism of functors $L_t\circ [1] \circ R_t\circ [-1] \cong [1] \circ R_t\circ [-1] \circ L_t$ (if we consider these functors as endofunctors of $\cu$). This composite functor $H^t$ actually takes values in $\hrt\subset \cu$, and it is homological if considered this way. 

3. 
We have  $\cu_{t\ge 0}=\perpp \cu_{t\le -1}$. 
 Thus $t$ is uniquely determined by  $\cu_{t\le 0}$. 

Moreover, the notion of $t$-structure is self-dual (cf. Proposition \ref{pbw}(\ref{idual}) below). Hence $\cu_{t\le 0}=(\cu_{t\ge 1})^{\perp}$, and thus $t$ is uniquely determined by $\cu_{t\ge 0}$ as well. 

4. Though in \cite{bbd} where $t$-structures were introduced, in the papers of A. Neeman mentioning this notion, 
 and in several preceding papers of the author the "cohomological convention" for $t$-structures was used, in the current text we 
 use the homological convention; the reason for this is that it is coherent with the homological convention for weight structures (see Remark \ref{rstws}(3) below). Respectively, 
 our notation $\cu_{t\ge 0}$ 
 corresponds to the class $\cu^{t\le 0}$ in the cohomological convention. \end{rema}

\subsection{Some basics on weight structures}\label{sws}

Let us recall some basic  definitions of the theory of weight structures. 

\begin{defi}\label{dwstr}

I. A pair of subclasses $\cu_{w\le 0},\cu_{w\ge 0}\subset\obj \cu$ 
will be said to define a weight
structure $w$ on a triangulated category  $\cu$ if 
they  satisfy the following conditions.

(i) $\cu_{w\le 0}$ and $\cu_{w\ge 0}$ are 
retraction-closed in $\cu$ (i.e., contain all $\cu$-retracts of their objects).

(ii) {\bf Semi-invariance with respect to translations.}

$\cu_{w\le 0}\subset \cu_{w\le 0}[1]$, $\cu_{w\ge 0}[1]\subset
\cu_{w\ge 0}$.

(iii) {\bf Orthogonality.}

$\cu_{w\le 0}\perp \cu_{w\ge 0}[1]$.

(iv) {\bf Weight decompositions}.

 For any $M\in\obj \cu$ there
exists a distinguished triangle
$$L_wM\to M\to R_wM {\to} L_wM[1]$$
such that $L_wM\in \cu_{w\le 0} $ and $ R_wM\in \cu_{w\ge 0}[1]$.
\end{defi}

We will also need the following definitions.

\begin{defi}\label{dwso}
Let $i,j\in \z$; assume that a triangulated category $\cu$ is endowed with a weight structure $w$.

\begin{enumerate}
\item\label{idh}
The full category $\hw\subset \cu$ whose objects are
$\cu_{w=0}=\cu_{w\ge 0}\cap \cu_{w\le 0}$ 
 is called the {\it heart} of 
$w$.

\item\label{id=i}
 $\cu_{w\ge i}$ (resp. $\cu_{w\le i}$, resp. $\cu_{w= i}$) will denote the class $\cu_{w\ge 0}[i]$ (resp. $\cu_{w\le 0}[i]$, resp. $\cu_{w= 0}[i]$).

\item\label{id[ij]}
$\cu_{[i,j]}$  denotes $\cu_{w\ge i}\cap \cu_{w\le j}$.\footnote{If $i>j$ and  $M\in \cu_{[i,j]}$ then $M\perp M$ by the orthogonality axiom; thus $\cu_{[i,j]}=\ns$.}

\item\label{idrest}
Let $\du$ be a full triangulated subcategory of $\cu$.

We will say that $w$ {\it restricts} to $\du$ whenever the couple $w_{\du}= (\cu_{w\le 0}\cap \obj \du,\ \cu_{w\ge 0}\cap \obj \du)$ is a weight structure on $\du$.

Moreover, in this case we will also say that $w$ is an {\it extension} of $w_{\du}$.

\item\label{ilrd} We will say that $M$ is left (resp., right) {\it $w$-degenerate} (or {\it weight-degenerate} if the choice of $w$ is clear) if $M$ belongs to $ \cap_{i\in \z}\cu_{w\ge i}$ (resp.    to $\cap_{i\in \z}\cu_{w\le i}$).

\item\label{iwnlrd} We will say that $w$ is left (resp., right) {\it non-degenerate} if all left (resp. right) weight-degenerate objects of $\cu$ are zero.

\item\label{idbob} We will call $\cup_{i\in \z} \cu_{w\ge i}$ (resp. $\cup_{i\in \z} \cu_{w\le i}$) the class of {\it $w$-bounded below} (resp., {\it $w$-bounded above}) objects of $\cu$.

Moreover, we will say that $w$ is {\it bounded below} (resp. {\it bounded above}, resp. {\it bounded}) if all objects of $\cu$ are bounded below (resp.  bounded above, resp. are bounded both below and above).

\item\label{id6}  We will say that a subcategory $\hu\subset \cu$ 
 is {\it negative} (in $\cu$) if $\obj \hu\perp (\cup_{i>0}\obj (\hu[i]))$.
\end{enumerate}
\end{defi}

\begin{rema}\label{rstws}

1. A  simple (and still  useful) example of a weight structure comes from the stupid filtration on the homotopy categories of cohomological complexes $K(\bu)$ for an arbitrary additive  $\bu$ (it can also be restricted to bounded complexes; see Definition \ref{dwso}(\ref{idrest})). In this case $K(\bu)_{\wstu\le 0}$ (resp. $K(\bu)_{\wstu\ge 0}$) is the class of objects that are
homotopy equivalent to complexes  concentrated in degrees $\ge 0$ (resp. $\le 0$); see Remark 1.2.3(1) of \cite{bonspkar} for more detail. 

We will use this notation below. 
 The heart of this weight structure $\wstu$ is the Karoubi-closure  of $\bu$  in  $K(\bu)$; hence it is equivalent to $\kar(\bu)$ (see Remark 2.1.4(2) of \cite{bsnew}).  

2. A weight decomposition (of any $M\in \obj\cu$) is almost never canonical. 

Still for any $m\in \z$ the axiom (iv) gives the existence of a distinguished triangle \begin{equation}\label{ewd} w_{\le m}M\to M\to w_{\ge m+1}M\to (w_{\le m}M)[1] \end{equation}  with some $ w_{\le m}M\in \cu_{w\le m}$   and $ w_{\ge m+1}M\in \cu_{w\ge m+1}$; we will call it an {\it $m$-weight decomposition} of $M$.

 We will often use this notation below (even though $w_{\ge m+1}M$ and $ w_{\le m}M$ are not canonically determined by $M$); we will call any possible choice either of $w_{\ge m+1}M$ or of $ w_{\le m}M$ (for any $m\in \z$) a {\it weight truncation} of $M$. Moreover, when we will write arrows of the type $w_{\le m}M\to M$ or $M\to w_{\ge m+1}M$ we will always assume that they come from some $m$-weight decomposition of $M$.

3. In the current paper we use the ``homological convention'' for weight structures; 
it was previously used in   
\cite{bonspkar}, 
\cite{bpgws},  \cite{bokum},  in \cite{bvt}, and in \cite{bsnew}, 
whereas in \cite{kellerw},  \cite{bws}, and in  \cite{bger}  the ``cohomological convention'' was used. In the latter convention 
the roles of $\cu_{w\le 0}$ and $\cu_{w\ge 0}$ are interchanged, i.e., one
considers   $\cu^{w\le 0}=\cu_{w\ge 0}$ and $\cu^{w\ge 0}=\cu_{w\le 0}$. So,  a
complex $X\in \obj K(\bu)$ whose only non-zero term is the fifth one (i.e.,
$X^5\neq 0$) has weight $-5$ in the homological convention, and has weight $5$
in the cohomological convention. Thus the conventions differ by ``signs of
weights''; 
 $K(\bu)_{[i,j]}$ is the class of retracts of complexes concentrated in degrees
 $[-j,-i]$. 
 
 We also recall that D. Pauksztello has introduced weight structures independently (in \cite{konk}); he called them
co-t-structures. 
\end{rema}


\begin{pr}\label{pbw}
Let  
$m\le n\in\z$, $M,M'\in \obj \cu$, $g\in \cu(M,M')$. 

\begin{enumerate}
\item \label{idual}
The axiomatics of weight structures is self-dual, i.e., for $\cu'=\cu^{op}$
(so $\obj\cu'=\obj\cu$) there exists the (opposite)  weight structure $w'$ for which $\cu'_{w'\le 0}=\cu_{w\ge 0}$ and $\cu'_{w'\ge 0}=\cu_{w\le 0}$.

\item\label{iort}
 $\cu_{w\ge 0}=(\cu_{w\le -1})^{\perp}$ and $\cu_{w\le 0}={}^{\perp} \cu_{w\ge 1}$.

\item\label{icoprod} $\cu_{w\le 0}$ is closed with respect to all (small) coproducts that exist in $\cu$.

\item\label{iext} 
 $\cu_{w\le 0}$, $\cu_{w\ge 0}$, and $\cu_{w=0}$ are additive and extension-closed. 


\item\label{icompl} 
				For any (fixed) $m$-weight decomposition of $M$ and an $n$-weight decomposition of $M'$  (see Remark \ref{rstws}(2))
 $g$ can be extended 
to a 
morphism of the corresponding distinguished triangles:
 \begin{equation}\label{ecompl} \begin{CD} w_{\le m} M@>{c}>>
M@>{}>> w_{\ge m+1}M\\
@VV{h}V@VV{g}V@ VV{j}V \\
w_{\le n} M'@>{}>>
M'@>{}>> w_{\ge n+1}M' \end{CD}
\end{equation}

Moreover, if $m<n$ then this extension is unique (provided that the rows are fixed).

\item\label{isplit} If $A\to B\to C\to A[1]$ is a $\cu$-distinguished triangle 
 and $A,C\in  \cu_{w=0}$ then this distinguished triangle splits; hence $B\cong A\bigoplus C\in \cu_{w=0}$.

\item\label{iwdmod} If $M$ belongs to $ \cu_{w\le 0}$ (resp. to $\cu_{w\ge 0}$) then it is a retract of any choice of $w_{\le 0}M$ (resp. of $w_{\ge 0}M$).

\item\label{iwd0} 
 If $M\in \cu_{w\ge m}$ 
 then $w_{\le n}M\in \cu_{[m,n]}$ (for any $n$-weight decomposition of $M$).

\item\label{igenlm}
The class $\cu_{[m,l]}$ is the 
extension-closure of $\cup_{m\le j\le l}\cu_{w=j}$.

\item\label{iuni} Let  $v$ be another weight structure for $\cu$; assume   $\cu_{w\le 0}\subset \cu_{v\le 0}$ and $\cu_{w\ge 0}\subset \cu_{v\ge 0}$.      
  Then $w=v$ (i.e., these inclusions are equalities).
\end{enumerate}
\end{pr}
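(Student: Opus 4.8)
The plan is to deduce both equalities purely formally from the orthogonality characterization of $\cu_{w\ge 0}$ and $\cu_{w\le 0}$ recorded in part (\ref{iort}), using only that the operations $D\mapsto D^\perp$ and $D\mapsto \perpp D$ reverse inclusions of subclasses of $\obj\cu$.

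First I would record the elementary fact that for subclasses $D\subset E$ one has $E^\perp\subset D^\perp$ and $\perpp E\subset \perpp D$; this is immediate from the definitions, since orthogonality to a larger class is a stronger requirement. I would also note that the shift $[1]$ is an auto-equivalence, so the hypotheses $\cu_{w\le 0}\subset \cu_{v\le 0}$ and $\cu_{w\ge 0}\subset \cu_{v\ge 0}$ propagate to all shifts; in particular $\cu_{w\le -1}\subset \cu_{v\le -1}$ and $\cu_{w\ge 1}\subset \cu_{v\ge 1}$ by Definition \ref{dwso}(\ref{id=i}).

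Next, for the equality $\cu_{w\ge 0}=\cu_{v\ge 0}$, I would apply $(-)^\perp$ to the inclusion $\cu_{w\le -1}\subset \cu_{v\le -1}$ to get $(\cu_{v\le -1})^\perp\subset (\cu_{w\le -1})^\perp$, and then rewrite both sides via part (\ref{iort}) (applied to $v$ and to $w$, respectively) as $\cu_{v\ge 0}\subset \cu_{w\ge 0}$. Combined with the hypothesis $\cu_{w\ge 0}\subset \cu_{v\ge 0}$ this yields equality. Dually --- or by the symmetric argument applying $\perpp(-)$ to $\cu_{w\ge 1}\subset \cu_{v\ge 1}$ and using the second formula of part (\ref{iort}) --- I would obtain $\cu_{v\le 0}\subset \cu_{w\le 0}$, which together with the hypothesis $\cu_{w\le 0}\subset \cu_{v\le 0}$ gives $\cu_{w\le 0}=\cu_{v\le 0}$. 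Hence $w=v$.

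The argument is essentially formal and I do not anticipate a genuine obstacle; the only point requiring a little care is to invoke the correct half of part (\ref{iort}) --- the right-orthogonality formula $\cu_{w\ge 0}=(\cu_{w\le -1})^\perp$ for the lower-bound classes, and the left-orthogonality formula $\cu_{w\le 0}=\perpp\cu_{w\ge 1}$ for the upper-bound classes --- for both $w$ and $v$, and to keep track of the single shift by $[1]$ so that the orthogonality axiom matches exactly. One could alternatively invoke self-duality (part (\ref{idual})) to derive the second equality from the first, which slightly shortens the write-up.
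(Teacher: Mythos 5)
Your argument is correct as far as it goes, but it addresses only one of the ten assertions of the proposition. What you have written is a proof of part (\ref{iuni}) (the uniqueness of a weight structure with given ``one-sided'' inclusions), deduced formally from part (\ref{iort}). That deduction is sound: the inclusions pass to shifts since $\cu_{w\le -1}=\cu_{w\le 0}[-1]$ and $\cu_{w\ge 1}=\cu_{w\ge 0}[1]$, the operations $D\mapsto D^{\perp}$ and $D\mapsto {}^{\perp}D$ reverse inclusions, and then the two formulas of part (\ref{iort}) applied to $w$ and $v$ give the reverse inclusions $\cu_{v\ge 0}\subset \cu_{w\ge 0}$ and $\cu_{v\le 0}\subset \cu_{w\le 0}$, whence equality. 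This is the standard argument (and it parallels the reasoning the paper itself uses in Proposition \ref{portadj} and Remark \ref{rcomp}(3)).

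The genuine gap is everything else. Parts (\ref{idual})--(\ref{igenlm}) --- self-duality, the orthogonality description (\ref{iort}) itself, closure of $\cu_{w\le 0}$ under coproducts, extension-closure, the extension of a morphism to a morphism of weight decompositions and its uniqueness for $m<n$, the splitting of triangles with endpoints in $\cu_{w=0}$, the retract statement (\ref{iwdmod}), the weight range of truncations (\ref{iwd0}), and the generation statement (\ref{igenlm}) --- receive no argument at all. This matters doubly because your proof of (\ref{iuni}) takes part (\ref{iort}) as an ingredient: within the proposition that is a legitimate ordering of the parts, but (\ref{iort}) then must itself be established from the axioms of Definition \ref{dwstr} (it follows from the orthogonality axiom together with the existence of weight decompositions and part (\ref{iwdmod})-type retract arguments), and that step is absent. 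For comparison, the paper disposes of the entire proposition by citing \cite{bws}, where all ten statements were proved (modulo the change of convention flagged in Remark \ref{rstws}(3)); a self-contained write-up would need to supply proofs, or at least citations, for the remaining nine parts.
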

\begin{proof}
All of these statements were essentially proved in \cite{bws} (yet pay attention to Remark \ref{rstws}(3) above!). 
\end{proof}

\subsection{On orthogonal  and adjacent  structures}\label{sadj}

Now let us give a certain definition of orthogonality for weight and $t$-structures. 
 Till \S\ref{sdual} we will only consider a particular case of the general notion introduced in \cite{bger} (see Remark \ref{rort}(\ref{irort1}) and Definition \ref{ddual} below).

\begin{defi}\label{dort}
Assume that $\cu$ and $\cu'$ are (full) triangulated subcategories of a triangulated category $\du$, $w$ is a weight structure on $\cu$ and $t$ is a $t$-structure on $\cu'$

1. 
 We will say that $w$ is {\it left orthogonal} (or {\it left $\du$-orthogonal}) to $t$ or that $t$ is {\it right orthogonal} to $w$ whenever $\cu_{w\le 0}\perp_{\du} \cu'_{t\ge 1}$ and $\cu_{w\ge 0}\perp_{\du} \cu'_{t\le -1}$.

2.  Dually,  we will say that $w$ is {\it right orthogonal} (or {\it right $\du$-orthogonal}) to $t$ or that $t$ is {\it left orthogonal} to $w$ whenever $ \cu'_{t\ge 1} \perp_{\du} \cu_{w\le 0}$ and $\cu'_{t\le -1} \perp_{\du} \cu_{w\ge 0} $.

3. If $\cu=\cu'=\du$ and $w$ is left or right orthogonal to $t$ we will also say that $w$ is (left or right) {\it adjacent} to $t$.

4. We will say that $t$ is  {\it strictly right orthogonal} to $w$ and $w$ is {\it strictly left orthogonal} to $t$  if $\cu'_{t\ge 1}=\cu_{w\le 0}^{\perp_{\du}}\cap \obj \cu'$ and $\cu'_{t\le -1}= {}^{\perp_{\du}} \cu_{w\ge 0} \cap \obj \cu'$.

\end{defi}

\begin{rema}\label{rplr}
We will mostly treat the case where $w$ is {\bf left} orthogonal to $t$. Respectively, we will say that $w$ is orthogonal (resp. adjacent) to $t$ of that $t$ is orthogonal to $w$ to mean that $w$ is left orthogonal (resp. adjacent) to $t$ and $t$ is right orthogonal to $w$.
\end{rema}

Let us now relate the latter definition to the notion of adjacent structures introduced in \cite{bws}.

\begin{pr}\label{portadj}
For $\cu$, $w$, and $t$ as in Definition \ref{dort}(3) we have the following: $w$ is (left) adjacent to $t$ if and only if $\cu_{w\ge 0}=\cu_{t\ge 0}$.

\end{pr}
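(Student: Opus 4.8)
The plan is to reduce the biconditional to pure bookkeeping with the orthogonal‑complement descriptions of $\cu_{w\ge 0}$ and $\cu_{t\ge 0}$. Since here $\cu=\cu'=\du$, Definition \ref{dort}(3) (via (1)) says that $w$ being left adjacent to $t$ means exactly
\[
\cu_{w\le 0}\perp \cu_{t\ge 1}\qquad\text{and}\qquad \cu_{w\ge 0}\perp \cu_{t\le -1}.
\]
I would then recognize each of these two relations, after a suitable shift, as precisely one of the two inclusions whose conjunction is the equality $\cu_{w\ge 0}=\cu_{t\ge 0}$; this handles both implications of the theorem simultaneously.

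Concretely, the first step is to rewrite the relation $\cu_{w\le 0}\perp \cu_{t\ge 1}$ using shift‑invariance of $\perp$: applying $[-1]$ to both arguments preserves orthogonality (as $[1]$ is an autoequivalence, so $\cu(X[-1],Y[-1])\cong\cu(X,Y)$), and by the index conventions of Definitions \ref{dwso}(\ref{id=i}) and \ref{dtstro}(1) it turns this into the equivalent statement $\cu_{w\le -1}\perp \cu_{t\ge 0}$. So left adjacency is equivalent to the conjunction of $\cu_{w\le -1}\perp \cu_{t\ge 0}$ and $\cu_{w\ge 0}\perp \cu_{t\le -1}$. Next I would feed in the two complement descriptions. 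By Proposition \ref{pbw}(\ref{iort}) one has $\cu_{w\ge 0}=(\cu_{w\le -1})^{\perp}$, so $\cu_{w\le -1}\perp \cu_{t\ge 0}$ says exactly that every object of $\cu_{t\ge 0}$ lies in $(\cu_{w\le -1})^{\perp}=\cu_{w\ge 0}$, i.e.\ it is equivalent to $\cu_{t\ge 0}\subseteq \cu_{w\ge 0}$. Dually, by Remark \ref{rtst}(3) one has $\cu_{t\ge 0}={}^{\perp}\cu_{t\le -1}$, so $\cu_{w\ge 0}\perp \cu_{t\le -1}$ is equivalent to $\cu_{w\ge 0}\subseteq {}^{\perp}\cu_{t\le -1}=\cu_{t\ge 0}$. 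Combining, the two orthogonality relations hold together iff both inclusions hold, i.e.\ iff $\cu_{w\ge 0}=\cu_{t\ge 0}$.

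I do not expect a serious obstacle here; the argument is a direct manipulation of the definitions and requires no weight‑ or $t$‑decomposition. The only points demanding care are bookkeeping ones: correctly matching the \emph{side} of the orthogonal complement in each input characterization (note that $\cu_{w\ge 0}$ is described as a \emph{right} complement $(\,\cdot\,)^{\perp}$ while $\cu_{t\ge 0}$ is described as a \emph{left} complement ${}^{\perp}(\,\cdot\,)$), and performing the single shift $[-1]$ needed to align the first adjacency relation with the weight‑structure description. Once these indices are tracked correctly the equivalence is immediate.
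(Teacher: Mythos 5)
Your proof is correct and follows essentially the same route as the paper's: the forward implication amounts to the orthogonality axioms of $w$ and $t$ (the easy halves of the complement descriptions), and the converse uses Proposition \ref{pbw}(\ref{iort}) and Remark \ref{rtst}(3) to extract the two inclusions $\cu_{t\ge 0}\subset \cu_{w\ge 0}$ and $\cu_{w\ge 0}\subset \cu_{t\ge 0}$. Packaging both directions as a single chain of equivalences via the shift $[-1]$ is a mild streamlining, not a different argument.
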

\begin{proof}
If $\cu_{w\ge 0}=\cu_{t\ge 0}$ then $w$ is  (left) adjacent to $t$ immediately from the orthogonality axioms of weight and $t$-structures (see Definition \ref{dtstr}(iii) and Definition \ref{dwstr}(iii)). Conversely, if $w$ is  adjacent to $t$ then combining the orthogonality conditions with Proposition \ref{pbw}(\ref{iort}) and Remark \ref{rtst}(3)  we obtain that
$\cu_{t\ge 0}\subset \cu_{w\ge 0}$ and $\cu_{w\ge 0}\subset \cu_{t\ge 0}$. Hence $\cu_{w\ge 0}= \cu_{t\ge 0}$ as desired.
\end{proof}

\begin{rema}\label{rort}
\begin{enumerate}
\item\label{irort1}
In Definition 2.5.1 of \cite{bger} and Definition 2.3.1 of \cite{bgn}  orthogonality was defined in terms of {\it dualities} of triangulated categories; see Definition \ref{ddual}(1,2) and Remark \ref{rdualzero}(1) below. The reader may easily check that all the statements of this paper that concern orthogonal structures can be generalized to  $\cu$ and $\cu'$  related by an arbitrary duality $\Phi:\cu\opp\times \cu'\to \au$ for an abelian category $\au$; cf. Remark \ref{rprefl} below.

We prefer to avoid dualities in most of this paper due to the reason that we don't have 
 many interesting examples of orthogonal structures in this more general setting.

\item\label{irort2} Proposition \ref{portadj} says that our definition of adjacent "structures" is essentially equivalent to the original Definition  4.4.1 of \cite{bws} (yet cf. Remark \ref{rstws}(3) and note that the definition of left and right adjacent weight and $t$-structures in loc. cit. was "symmetric", i.e., $w$ being left adjacent to $t$ and $t$ being left adjacent to $w$ were synonyms; in contrast, our current convention follows Definition 3.10 of \cite{postov}). 

\item\label{irort3}
Recall also that the notions of weight and $t$-structures essentially have a common generalization; so, both of these yield certain {\it torsion theories}  as defined in \cite{iyayo} (this is the same thing as a {\it complete Hom-orthogonal pair} in the terms of \cite{postov}); see \S3 of \cite{bvt}. Respectively, our definition of adjacent structures is a particular case of Definition 2.2(3) of ibid.

Note also that 
 certain shifts of the classes 
$\cu_{w\le 0}$, $\cu_{w\ge 0}=\cu_{t\ge 0}$, and $\cu_{t\le 0}$  give  a {\it suspended TTF triple} in the sense of   \cite[Definition 2.3]{humavit}. \end{enumerate}
\end{rema}

\section{
On virtual $t$-truncations and their relation to orthogonal $t$-structures}\label{sortvtt} 

This section is dedicated to the virtual $t$-truncations of functors (these come from weight structures) and their general relationship 
with orthogonal $t$-structures.

In \S\ref{svtt} we recall the definition of virtual $t$-truncations of (co)homological functors and study their (easily defined) weight range.

In \S\ref{sgencrit} we introduce the notion of "reflection" (in the context of Definition \ref{dort}) and relate the existence of orthogonal $t$-structures to virtual $t$-truncations.

\subsection{Virtual $t$-truncations and their weight range}\label{svtt} 

We  recall the   notion of virtual $t$-truncations for a cohomological functor $H:\cu\to \au$ (as defined in \S2.5 of \cite{bws} and studied in more detail in \S2 of \cite{bger}). These truncations allow us to "slice" $H$ into $w$-pure pieces (see Remark \ref{rpure}(1--2) below).  

\begin{defi}\label{dvtt}

Assume that $\cu$ is endowed with a weight structure $w$, $ n\in \z$, and $\au$ is  an abelian category.

1. Let $H$ be a cohomological functor from $\cu$ into $\au$. 

 We define the {\it virtual $t$-truncation} functors $\tau_{\le n }(H)$ (resp. $\tau_{\ge n }(H)$)  by the correspondence $$M\mapsto\imm (H(w_{\le n+1}M)\to H(w_{\le n}M)) ;$$ 
(resp. $M\mapsto\imm (H(w_{\ge n}M)\to H(w_{\ge n-1}M)) $); here we take arbitrary choices of  
the corresponding weight truncations of $M$ and connect them using Proposition \ref{pbw}(\ref{icompl}) in the case $g=\id_M$.

2. Let $H':\cu\to \au$ be a homological functor.  Then we will write $\tau_{\le n }(H')$ for the correspondence $M\mapsto\imm (H'(w_{\le n}M)\to  H'(w_{\le n+1}M)) $ and $\tau_{\ge n }(H')=M\mapsto\imm (H'(w_{\ge n-1}M)\to H'(w_{\ge n}M)) $ (here we take the same connecting arrows between weight truncations of $M$ as above).

3.  Assume that $\cu$ is a full triangulated subcategory of a triangulated category $\du$. Then for any $M\in \obj \du$ we will write $H_M=H_{M,\cu}$  (resp. $H^M=H^M_{\cu}$) for the restriction of the functor (co)represented by $M$ to $\cu$ (thus $H_M$ and $H^M$ are functors from $\cu$ into $\ab$).  Moreover, sometimes we will say that these functors are $\du$-Yoneda ones, and that $H_M$ (resp. $H^M$) is $\du$-(co)represented by $M$.
\end{defi}

We recall the main properties of these constructions. 

\begin{pr}\label{pwfil}
In the notation of the previous definition the  following statements are valid.

1. The objects $\tau_{\le n}(H)(M)$ and $\tau_{\ge n}(H)(M)$ are $\cu$-functorial  in $M$  (and essentially do not depend on any choices).

2. The functors $\tau_{\le n}(H)$ and $\tau_{\ge n}(H)$ are cohomological.

3. There exist natural transformations that  yield a long exact sequence 
\begin{equation}\label{evtt}
\begin{gathered} 
\dots \to \tau_{\le n-1}(H)\circ [-1] \to \tau_{\ge n }(H)\to H \\ \to \tau_{\le n -1}(H)\to \tau_{\ge n }(H)\circ [1]\to H^{-1}\to\dots\end{gathered} 
 \end{equation}  (i.e.,  the result of applying this sequence to any object of $\cu$ is a long exact sequence); the shift of this exact sequence by $3$ positions is given by composing the functors with $-[1]$. 

4. Assume that there exists a $t$-structure $t$ that is right orthogonal to $w$ (for certain $\cu'$ and $\du$ as in Definition \ref{dort}). 
Then for any $M\in \obj \cu'$ 
 the functors $ \tau_{\ge n }(H_M)$ and $\tau_{\le n}(H_M)$ (where $H_M$ is defined in Definition \ref{dvtt}(3)) are $\du$-represented (on $\cu$) by $t_{\ge n}M$ and $t_{\le n}M$ (see Remark \ref{rtst}(1)), respectively.

5.  The correspondence  $\tau_{\ge n }(H')$ gives a well-defined homological functor, and there exists a homological analogue of the long exact sequence (\ref{evtt}). 

Moreover, if   there exists a $t$-structure $t$ on a 
 triangulated category $\cu'$ that is  left orthogonal to $w$ (with respect to a triangulated category $\du$ containing $\cu$ and $\cu'$), $\au=\ab$, and the functor $H$ is $\du$-corepresented by an object $N$ of $\cu'$,
 then $\tau_{\ge n }(H')$ is $\du$-corepresented by $t_{\ge n}N$  and $\tau_{\ge n }(H')$ is $\du$-corepresented by $t_{\le n}N$.

6. For any $i\in \z$ we have $\tau_{\le n+i}(H)\cong  \tau_{\le n}(H\circ [i])\circ [-i]$ and $\tau_{\ge n+i}(H)\cong  \tau_{\ge n}(H\circ [i])\circ [-i]$, and also $\tau_{\le n+i}(H')\cong  \tau_{\le n}(H'\circ [i])\circ [-i]$ and $\tau_{\ge n+i}(H')\cong  \tau_{\ge n}(H'\circ [i])\circ [-i]$. 

7. Let $\au'$ be an abelian subcategory of an abelian category $\au$, i.e., $\au'$ is its full subcategory that contains the $\au$-kernel and the $\au$-cokernel of any morphism in $\au'$; 
assume that $w$ restricts to a triangulated subcategory $\cu'$ of $\cu$. 

Then if the restriction of $H$ to $\cu'$ takes it values in $\au'$ then the same is true for all its virtual $t$-truncations.

\end{pr}
\begin{proof} Assertions 1-4 are given by Theorem 2.3.1 of \cite{bger} (yet pay attention to Remark \ref{rstws}(3); one should also invoke Remark \ref{rdualzero}(1) below to obtain assertion 4). Assertion 5  is easily seen to be dual to the previous ones, whereas assertions 6 and 7 follow from our definitions immediately.
\end{proof}


Now we define weight range and relate it to virtual $t$-truncations; some of these statements will be applied elsewhere (only). 

\begin{defi}\label{drange}
 Let $m,n\in \z$; let  $H$ be as above. 

Then we will say that $H$ is {\it of weight range} $\ge m$ (resp. $\le n$, resp.  $[m,n]$)  if it annihilates $\cu_{w\le m-1}$ (resp. $\cu_{w\ge n+1}$, resp. both of these classes). 
\end{defi}

\begin{pr}\label{pwrange}

\begin{enumerate}
\item\label{iwrort} Adopt the notation and assumptions of Definition \ref{dort}(1). Then for 
 $N\in \cu'_{t\le 0}$ (resp.  $N\in \cu'_{t\ge 0}$, resp. $N\in \cu'_{t= 0}$) the corresponding $\du$-Yoneda functor $H_N:\cu\opp\to \ab$ (see Definition \ref{dvtt}(3))  is of weight range $\le 0$ (resp. $\ge 0$, resp. $[0,0]$).

\item\label{iwrvt} 
For $H$ as in Definition \ref{dvtt}(1) the functor $\tau_{\le n}(H)$ is of weight range $\le n$, and $\tau_{\ge m}(H)$ is of weight range $\ge m$.

\item\label{iwridemp} We have $\tau_{\le n}(H)\cong H$ (resp. $\tau_{\ge m}(H)\cong H$) if and only if $H$ is of weight range $\le n$ (resp. of weight range $\ge m$).

\item\label{iwrcomm} We have $\tau_{\le n}(\tau_{\ge m})(H)\cong \tau_{\ge m}(\tau_{\le n})(H)$.

\item\label{iwruni} If a cohomological functor $H'$ from $\cu$ into $\au$ is of weight range $\ge n$ (resp. of weight range $\le n-1$) then any transformation $T:H'\to H$ (resp. $H\to H'$) factors through the transformation $\tau_{\ge n }(H)\to H$ (resp.  $H \to \tau_{\le n -1}(H)$) provided by the formula (\ref{evtt}).

\item\label{iwrpure} The (not necessarily locally small) category of weight range $[0,0]$ cohomological functors from $\cu$ into $\au$  is equivalent to $\adfu(\hw\opp,\au)$  in the obvious natural way. 

\item\label{iwrpureb} Assume that $H$ is a  weight range $[0,0]$ cohomological functor from $\cu$, and $M$ is a bounded above (resp. below) object of $\cu$. Then $H^i(M)=0$ for $i\gg 0$ (resp. for $i\ll 0$).

\item\label{iwfil4} If $H$ is 
of weight range $\ge m$  then  $\tau_{\le n}(H)$ is 
  of weight range $[m,n]$.

Dually, if $H$ is 
of weight range $\le n$  then  $\tau_{\ge m}(H)$ is 
  of weight range $[m,n]$.

\item\label{iwrvan} 
Assume that $m>n$. Then the only functors of weight range  $[m,n]$ are zero ones; thus if $H$ is of weight range $\le n$ (resp. $\ge m$) then $\tau_{\ge m}(H)=0$ (resp. $\tau_{\le n}(H)=0$).

\item\label{iwrperp} 
If a cohomological functor $H'$ (resp. $H''$) from $\cu$ into $\au$ is of weight range $\ge n$ (resp. of weight range $\le n-1$)
then there are no non-zero transformations from $H'$ into $H''$.

\item\label{iwfil3} The (representable) functor  $H_M=\cu(-,M):\cu\opp\to \ab$  if of weight range $\ge m$ if and only if $M\in \cu_{w\ge m}$.

\item\label{iwfil5} If $H$ is of weight range $\ge m$ (resp. $\le m$) then  the morphism  $H(w_{\ge m}M)\to H(M)$ is epimorphic (resp. the morphism  $H(M)\to H(w_{\le m}M)$ is monomorphic); here we take arbitrary choices of the corresponding weight decompositions of $M$ and apply $H$ to the connecting morphisms.

\end{enumerate}
\end{pr}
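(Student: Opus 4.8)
The plan is to treat the twelve assertions as a toolbox built up in dependency order, deriving most of them directly from the defining formulas of Definition \ref{dvtt} together with Proposition \ref{pwfil} and the elementary properties of weight decompositions in Proposition \ref{pbw}. The crucial preliminary observation is that virtual $t$-truncations are computed from \emph{arbitrary} weight truncations (Proposition \ref{pwfil}(1)), so I am free to plug in convenient ones. For \ref{iwfil3} I would unfold the definition of weight range $\ge m$ and apply the orthogonality formula $\cu_{w\ge m}=(\cu_{w\le m-1})^{\perp}$ of Proposition \ref{pbw}(\ref{iort}); \ref{iwrort} is the same computation pushed through the orthogonality of Definition \ref{dort}(1), writing $X\in\cu_{w\ge 1}$ as $Y[1]$ and using that $N\in\cu'_{t\le 0}$ gives $N[-1]\in\cu'_{t\le -1}$. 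For \ref{iwrvt} I would take the zero weight truncation: if $M\in\cu_{w\ge n+1}$ then $w_{\le n}M=0$ is admissible, forcing $\tau_{\le n}(H)(M)=\imm(H(w_{\le n+1}M)\to H(0))=0$, and dually for $\tau_{\ge m}$. The first sentence of \ref{iwrvan} follows by sandwiching $H(M)$ between the vanishing groups $H(w_{\ge m}M)$ and $H(w_{\le m-1}M)$ in the long exact sequence of an $(m-1)$-weight decomposition. Both halves of \ref{iwfil5} are read off the same long exact sequence (the relevant neighbouring term is killed by the weight-range hypothesis), and \ref{iwrperp} is a one-line naturality chase: for $T\colon H'\to H''$ and $b\colon M\to w_{\ge n}M$ one gets $T_M\circ H'(b)=H''(b)\circ T_{w_{\ge n}M}=0$ since $H''(w_{\ge n}M)=0$, while $H'(b)$ is epic since $H'(w_{\le n-1}M)=0$.

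The next layer is the idempotence \ref{iwridemp}. The key device here is the distinguished triangle $w_{\le n}M\to w_{\le n+1}M\to Q\to (w_{\le n}M)[1]$ with $Q\in\cu_{w=n+1}$, obtained from the octahedral axiom applied to $w_{\le n}M\to w_{\le n+1}M\to M$ together with the extension-closedness of $\cu_{w\ge n+1}$ and $\cu_{w\le n+1}$ (Proposition \ref{pbw}(\ref{iext})). If $H$ is of weight range $\le n$, then $H$ kills $Q$ and $H(M)\cong H(w_{\le n+1}M)$ (the ``error'' $w_{\ge n+2}M$ and its shift lie in $\cu_{w\ge n+1}$), while the triangle above shows $H(w_{\le n+1}M)\to H(w_{\le n}M)$ is monic; hence $\tau_{\le n}(H)(M)\cong H(M)$ naturally, and the converse is immediate from \ref{iwrvt}. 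Assertion \ref{iwfil4} is then an easy case distinction: if $m\ge n+2$ then $w_{\le n+1}M\in\cu_{w\le m-1}$ is killed so $\tau_{\le n}(H)=0$, and if $m\le n+1$ one takes $w_{\le n}M=w_{\le n+1}M=M$ for $M\in\cu_{w\le m-1}$. Given idempotence, \ref{iwruni} follows formally: applying the functor $\tau_{\ge n}(-)$ to $T\colon H'\to H$ and using that the canonical transformation $\alpha\colon\tau_{\ge n}(-)\to\mathrm{id}$ from (\ref{evtt}) is natural, the resulting square forces $T=\alpha_H\circ(\tau_{\ge n}(T)\circ\alpha_{H'}^{-1})$, where $\alpha_{H'}$ is invertible precisely because $H'$ is of weight range $\ge n$. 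The ``thus'' clause of \ref{iwrvan} then combines \ref{iwfil4} with its first sentence.

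For the pure-functor equivalence \ref{iwrpure} I would use restriction to the heart, $H\mapsto H|_{\hw}$, as the comparison functor. Its essential surjectivity and full faithfulness rest on reconstructing a weight-range-$[0,0]$ functor from its values on $\cu_{w=0}$: every object of $\cu_{[m,n]}$ is an iterated extension of weight-$j$ objects (Proposition \ref{pbw}(\ref{igenlm})) and $H$ is cohomological, so $H$ is determined on bounded pieces by $H|_{\hw}$, and $H\cong\tau_{\ge 0}\tau_{\le 0}(H)$ (idempotence) shows nothing is lost when one extends $F$ back to the pure functor it represents. Assertion \ref{iwrpureb} is immediate: if $M\in\cu_{w\le N}$ then $M[-i]\in\cu_{w\le N-i}\subset\cu_{w\le -1}$ for $i\gg 0$, so $H^i(M)=H(M[-i])=0$, and dually for bounded-below $M$.

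The one genuinely delicate point is the commutativity \ref{iwrcomm}, which I expect to be the main obstacle. Both composites are of weight range $[m,n]$ by \ref{iwfil4}, and by \ref{iwrvan} the case $m>n$ is vacuous, so the content is an isomorphism of two weight-range-$[m,n]$ functors. The difficulty is that the category of cohomological functors is not triangulated, so one cannot simply invoke the standard fact that $t$-structure truncations commute. The route I would take is the explicit one: establish that iterated weight truncations are compatible, i.e. $w_{\ge m}w_{\le n}M\cong w_{\le n}w_{\ge m}M$ lies in $\cu_{[m,n]}$ (again via the octahedral axiom and Proposition \ref{pbw}(\ref{iwd0}),(\ref{iext})), and then chase the resulting double-image descriptions of $\tau_{\le n}\tau_{\ge m}(H)(M)$ and $\tau_{\ge m}\tau_{\le n}(H)(M)$ to a common canonical subquotient of the values of $H$ on these bounded pieces. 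A cleaner but less self-contained alternative is to reduce to the representable situation of Proposition \ref{pwfil}(4), where the virtual truncations become honest $t$-truncations $t_{\le n},t_{\ge m}$ and commute for the usual reason; this is essentially the bookkeeping carried out in \cite{bger,bws}, to which I would defer.
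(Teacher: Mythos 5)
The one genuine gap in your proposal is part \ref{iwrpure}. Your comparison functor $H\mapsto H|_{\hw}$ is the right one, but your sketch does not prove essential surjectivity: given an arbitrary additive functor $F\in \adfu(\hw\opp,\au)$, one must \emph{construct} a cohomological functor of weight range $[0,0]$ on all of $\cu$ restricting to $F$, and your phrase ``the pure functor it represents'' presupposes exactly this existence. The reconstruction step also does not work as stated: knowing that every object of $\cu_{[m,n]}$ is an iterated extension of objects of the $\cu_{w=j}$ determines neither the value $H(M)$ (an object of an abelian category is not determined by exhibiting it as an extension of known objects) nor the functor structure and, if $w$ is unbounded, most objects of $\cu$ lie in no $\cu_{[m,n]}$ at all, so ``determined on bounded pieces'' says nothing about them; the same problem afflicts full faithfulness. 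The paper disposes of this part by citing Theorem 2.1.2(2) of \cite{bwcp}, whose proof rests on the weight complex functor ($H(M)$ is computed as the middle cohomology of $F$ applied to a weight complex of $M$); machinery of this kind is unavoidable here and your sketch replaces it by a hand-wave. A second, smaller defect concerns part \ref{iwrcomm}: your ``cleaner alternative'' of reducing to Proposition \ref{pwfil}(4) is unavailable, since that statement presupposes a $t$-structure right orthogonal to $w$ --- which need not exist, and whose construction is the very purpose of this proposition --- and since $H$ is an arbitrary $\au$-valued cohomological functor, not a representable one. As you hedge by deferring to \cite{bger}, exactly as the paper does (it quotes Theorem 2.3.1(II.3) of ibid.), this is a flawed suggestion rather than a broken step.

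Everything else is correct, and several of your routes are genuine alternatives to the paper's. Parts \ref{iwrort}, \ref{iwrvt}, \ref{iwruni}, \ref{iwrpureb}, \ref{iwrvan}, \ref{iwfil3}, and \ref{iwfil5} are argued essentially as in the paper (unwinding orthogonality, taking the zero weight truncation, the commutative square (\ref{euni}) plus idempotence, and the long exact sequence (\ref{eles})). For part \ref{iwridemp} the paper simply cites Theorem 2.3.1(III.2,3) of \cite{bger}, whereas your octahedron argument --- producing $Q\in\cu_{w=n+1}$, showing $H(M)\cong H(w_{\le n+1}M)$ and that $H(w_{\le n+1}M)\to H(w_{\le n}M)$ is monic --- is a correct self-contained proof. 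For part \ref{iwfil4} your two-case analysis is correct and has the advantage of not using part \ref{iwrcomm}, which the paper's chain $\tau_{\le n}(H)\cong \tau_{\le n}(\tau_{\ge m}(H))\cong \tau_{\ge m}(\tau_{\le n}(H))$ does use; similarly, your naturality chase for part \ref{iwrperp} is a correct direct substitute for the paper's deduction from parts \ref{iwruni} and \ref{iwrvan}. So, after repairing part \ref{iwrpure} (most efficiently, by citing \cite{bwcp} as the paper does), your proposal would prove everything except part \ref{iwrcomm}, which both you and the paper leave to \cite{bger}.
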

\begin{proof}

\ref{iwrort}. For $N\in \cu'_{t\le 0}$ and $N\in \cu'_{t\ge 0}$ 
the weight range estimates for the functor $H_N$ prescribed by the assertion 
 are given by the definition of orthogonality, and to obtain the claim for $N\in \cu'_{t= 0}$ one should combine the first two weight range statements.

\ref{iwrvt}. Let $M\in \cu_{w\ge n+1}$. Then we can take $w_{\le n}(M)=0$. Thus $\tau_{\le n}(H)(M)=0$, and we obtain the first part of the assertion. It second part is easily seen to be dual to the first part. 

\ref{iwridemp}. 
This  is precisely Theorem 2.3.1(III.2,3) of \cite{bger} (up to change of notation); assertion \ref{iwrcomm} is given by part II.3 of that theorem.

 \ref{iwruni}. The two statements in the assertion 
 are easily seen to be dual to each other; hence it suffices to consider the case where $H'$ is of weight range $\ge n$. Next, the obvious functoriality of the definition of virtual $t$-truncations gives the
following commutative square of transformations:

\begin{equation}\label{euni}
\begin{CD} 
\tau_{\ge n}H'@>{\tau_{\ge n}T}>> \tau_{\ge n}H \\
@VV{i'}V@VV{i}V \\
H'@>{T}>>H
\end{CD}\end{equation}
(cf. (\ref{evtt})).

Applying assertion \ref{iwridemp} we obtain that the transformation $i'$ is an equivalence. Hence the transformation 
$\tau_{\ge n}T$ yields the factorization in question. 

\ref{iwfil4}. 
Let  $H$ be of weight range $\ge m$.  Then $\tau_{\le n}(H)\cong \tau_{\le n}(\tau_{\ge m})(H)\cong \tau_{\ge m}(\tau_{\le n})(H)$ (according to the two previous assertions). It remains to apply assertion \ref{iwrvt} to obtain the first statement in the assertion,  
whereas its second part is easily seen to be the dual of the first part (and certainly  can be proved similarly).

\ref{iwrvan}. 
For any $l\in \z$ and any cohomological $H$ any choice of an $l$-weight decomposition triangle (cf. (\ref{ewd})) for $M$ gives a long exact sequence
\begin{equation}\label{eles}
\begin{gathered}
\dots \to H((w_{\le l}M)[1])\to  H(w_{\ge l+1}M)\to H(M)\\
\to H(w_{\le l}M)\to H((w_{\ge l+1}M)[-1])\to\dots
\end{gathered}
\end{equation}
The exactness of this sequence in $H(M)$ for $l=n$ immediately gives   the first part of the assertion. 
 Next, the second part is straightforward from the first one combined with assertion \ref{iwfil4}.

\ref{iwrpure}. Immediate from Theorem 2.1.2(2) of \cite{bwcp}.  

\ref{iwrpureb}. Straightforward from the definition of weight range.

\ref{iwrperp}. According to assertion \ref{iwruni}, any transformation in question factors through $\tau_{\ge n }(H'')$; thus it is zero according to assertion \ref{iwrvan}. 

Assertion \ref{iwfil3} is immediate from  Proposition \ref{pbw}(\ref{iort}). 

Assertion \ref{iwfil5} is a straightforward consequence of assertion \ref{iwfil3}; just apply (\ref{eles}) for $l=m$ and for $l=m-1$, respectively. 
\end{proof}

\begin{rema}\label{rpure}
1. Roughly, the statements above say that virtual $t$-truncations of functors behave as if they corresponded to $t$-truncations of objects in a certain triangulated "category of functors" (whence the name; certainly, another justification of this idea is provided by the existence of orthogonal $t$-structures statements that will be proved below). In particular, one can "slice" any functor of weight range $[m,n]$ for $m\le n$ into "pieces" of weight $[i,i]$ for $m\le i\le n$. Now, composing a "slice" of weight range $[i,i]$ with $[i]$ one obtains a functor of  weight range $[0,0]$. 

2. So we recall that functors of this type were studied in detail in (\S2.1 of) \cite{bwcp}; they were called {\it pure} ones due to the relation to Deligne's purity (cf. Remark 2.1.3(3--4) of ibid.).

3. The author suspects that the connecting transformation in part \ref{iwruni} of our proposition is actually unique. 
\end{rema}


We also formulate a simple statement for the purpose of applying it in \cite{bsnew}.

\begin{pr}\label{pcrivtt}
For $M\in \obj \cu$ the following conditions are equivalent.

(i) $M\in \cu_{w\ge 0}$. 

(ii) $H(M)=0$ for any cohomological functor $H$ from $\cu$ into (an abelian category) $\au$ that is of weight range $\le -1$.

(iii) $(\tau_{\le-  1}H_N)(M) =\ns$ for any $N\in \obj \cu$.

(iv) $(\tau_{\le -1}H_M)(M) =\ns$.

\end{pr}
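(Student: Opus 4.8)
The plan is to establish the cyclic chain of implications (i)$\Rightarrow$(ii)$\Rightarrow$(iii)$\Rightarrow$(iv)$\Rightarrow$(i). All but the last of these are formal consequences of the results already recorded; the only implication carrying genuine content is (iv)$\Rightarrow$(i).

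For (i)$\Rightarrow$(ii) I would simply unwind Definition \ref{drange}: a cohomological functor of weight range $\le -1$ is by definition one that annihilates $\cu_{w\ge 0}$, so if $M\in\cu_{w\ge 0}$ then $H(M)=0$ for every such $H$. For (ii)$\Rightarrow$(iii), observe that for each $N\in\obj\cu$ the functor $\tau_{\le -1}H_N$ is cohomological by Proposition \ref{pwfil}(2), takes values in $\ab$, and is of weight range $\le -1$ by Proposition \ref{pwrange}(\ref{iwrvt}); applying hypothesis (ii) to it gives $(\tau_{\le -1}H_N)(M)=\ns$. Finally (iii)$\Rightarrow$(iv) is just the specialization $N=M$.

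The crux is (iv)$\Rightarrow$(i). Here I would fix a $(-1)$-weight decomposition $w_{\le -1}M\xrightarrow{c}M\to w_{\ge 0}M\to (w_{\le -1}M)[1]$ together with a compatible $0$-weight decomposition $w_{\le 0}M\xrightarrow{c'}M\to w_{\ge 1}M\to(w_{\le 0}M)[1]$, linking them by the morphism of triangles furnished by Proposition \ref{pbw}(\ref{icompl}) for $g=\id_M$ (with $-1<0$). This produces a morphism $h\colon w_{\le -1}M\to w_{\le 0}M$ satisfying $c'\circ h=c$, and $h$ is exactly the connecting arrow entering the definition of $\tau_{\le -1}$. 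Computing with these choices, $(\tau_{\le -1}H_M)(M)=\imm\bigl(\cu(w_{\le 0}M,M)\xrightarrow{(-)\circ h}\cu(w_{\le -1}M,M)\bigr)$. The key observation — and the step I expect to be the main obstacle, albeit a mild one — is that the canonical morphism $c$ itself lies in this image, since $c=c'\circ h$. Hence the hypothesis $(\tau_{\le -1}H_M)(M)=\ns$ forces $c=0$.

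It then remains to deduce $M\in\cu_{w\ge 0}$ from $c=0$. Since the first arrow of the distinguished triangle $w_{\le -1}M\xrightarrow{c}M\to w_{\ge 0}M\to(w_{\le -1}M)[1]$ vanishes, the standard splitting lemma for triangulated categories yields $w_{\ge 0}M\cong M\oplus(w_{\le -1}M)[1]$; thus $M$ is a retract of $w_{\ge 0}M\in\cu_{w\ge 0}$, and retraction-closedness of $\cu_{w\ge 0}$ (Definition \ref{dwstr}(i)) gives $M\in\cu_{w\ge 0}$. This closes the cycle and proves the equivalence.
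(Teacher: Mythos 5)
Your proof is correct, and its skeleton coincides with the paper's: the same cyclic chain (i)$\Rightarrow$(ii)$\Rightarrow$(iii)$\Rightarrow$(iv)$\Rightarrow$(i), with the first three implications handled exactly as you handle them (definition of weight range, Proposition \ref{pwrange}(\ref{iwrvt}), and specialization $N=M$). The one genuine divergence is in the crux implication (iv)$\Rightarrow$(i). The paper invokes the long exact sequence (\ref{evtt}) (i.e., Proposition \ref{pwfil}(3)) for $H_M$ at $n=0$: exactness at $H_M$ plus $(\tau_{\le -1}H_M)(M)=\ns$ gives that $(\tau_{\ge 0}H_M)(M)$ surjects onto $\cu(M,M)$, so $\id_M$ factors through $w_{\ge 0}M$ and $M\in\cu_{w\ge 0}$. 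You instead work straight from Definition \ref{dvtt}(1): the canonical morphism $c\colon w_{\le -1}M\to M$ equals $c'\circ h$, hence lies in the image defining $(\tau_{\le -1}H_M)(M)$, so the hypothesis forces $c=0$, and then the triangle splits. These are two faces of the same exactness: applying $\cu(-,M)$ to the triangle $w_{\le -1}M\xrightarrow{c}M\to w_{\ge 0}M$, the condition that $\id_M$ lies in the image of $\cu(w_{\ge 0}M,M)\to\cu(M,M)$ is equivalent to $\id_M\circ c=c=0$. What your route buys is self-containedness — it needs only the definition of virtual $t$-truncations, Proposition \ref{pbw}(\ref{icompl}), and standard triangulated-category facts, and never cites the exactness of (\ref{evtt}), whose proof (from \cite{bger}) is the most substantial input in the paper's version; what the paper's route buys is brevity, given that (\ref{evtt}) is already on the table. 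One small remark: the full splitting $w_{\ge 0}M\cong M\oplus(w_{\le -1}M)[1]$ is slightly more than you need, since $c=0$ already yields, via the Hom-exact sequence above, that $\id_M$ factors through $w_{\ge 0}M$, i.e., that $M$ is a retract of $w_{\ge 0}M$.
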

\begin{proof}
Condition (i) implies condition  (ii) by definition; certainly, (iii)   $\implies$ (iv). Next, condition (ii) implies condition (iii) according to Proposition  \ref{pwrange}(\ref{iwrvt}). 

Lastly, if $(\tau_{\le -1}H_M)(M) =\ns$ then the long exact sequence (\ref{evtt}) yields that $(\tau_{\ge -0}H_M)(M)$ surjects onto $\cu(M,M)$. Hence the morphism $\id_M$ factors through $w_{\ge  0}M$; thus $M$ belongs to $ \cu_{w\ge 0}$.
\end{proof}

\subsection{General criteria for the existence of adjacent and orthogonal $t$-structures}\label{sgencrit}

To generalize the criteria below from the "main" case of adjacent structures to certain orthogonal structures we will need the following definition.

\begin{defi}\label{drefl}
Let $\cu$ and $\cu'$ be (full) triangulated subcategories of a triangulated category $\du$ (cf. Definition \ref{dort}).


Then 
 we will say that  $\cu$  {\it reflects  $\cu'$} 
 (in the category $\du$)  whenever the 
$\du$-Yoneda  functor $\cu'\to \adfu(\cu\opp,\ab):M\mapsto H_M$ (see Definition \ref{dvtt}(3))   is fully faithful. 
\end{defi}

Let us relate this notion to orthogonal structures and their properties. The reader may note that some of these proofs can be substantially simplified in the cases $\cu=\cu'$ (and so, for adjacent structures) and $\cu'\subset \cu$.

\begin{pr}\label{prefl}
Adopt the notation of the previous definition.

\begin{enumerate}
\item\label{irefl1} If $\cu'\subset \cu$ then $\cu$ reflects $\cu'$. 

\item\label{irefl3} If $\cu$ reflects $\cu'$ 
  and $w$  is a weight structure on $\cu$ then for the classes $C'_1= \cu_{w\ge 1}^{\perp_{\du}}\cap \obj \cu'$  and $C'_2= \cu_{w\le -1}^{\perp_{\du}}\cap \obj \cu'$ we have $C'_2[1]\perp C'_1$.

\item\label{irefl4} If $\cu$ reflects $\cu'$ 
 and there exists a $t$-structure $t=(\cu'_{t\le 0},\cu'_{t\ge 0})$ on $\cu'$ that is right orthogonal to $w$ then $t$ is also strictly right orthogonal to $w$, i.e., $t=(C'_1,C'_2)$.  Moreover, the corresponding Yoneda-type functor $\hrt\to \adfu(\hw\opp,\ab)$ is fully faithful. 
\end{enumerate}
\end{pr}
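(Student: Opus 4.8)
The unifying device is reflection: for objects of $\cu'$ it turns $\du$-Hom groups into groups of natural transformations of the associated functors $H_{(-)}$ on $\cu$, after which everything reduces to the weight-range bookkeeping of Proposition~\ref{pwrange}. Part~(\ref{irefl1}) needs no weight structure: if $\cu'\subset\cu$ then each $M\in\obj\cu'$ lies in $\cu$, and fullness of $\cu$ in $\du$ gives $H_M=\cu(-,M)$. Thus $M\mapsto H_M$ is the restriction of the contravariant Yoneda embedding of $\cu$, and the Yoneda lemma identifies $\adfu(\cu\opp,\ab)(H_M,H_{M'})$ with $\cu(M,M')=\cu'(M,M')$; this is precisely full faithfulness, so $\cu$ reflects $\cu'$.

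For part~(\ref{irefl3}) I would first absorb the shift, using $\cu_{w\le-1}^{\perp_{\du}}[1]=\cu_{w\le0}^{\perp_{\du}}$ to rewrite $C'_2[1]=\cu_{w\le0}^{\perp_{\du}}\cap\obj\cu'$. Then for $X\in C'_2[1]$ the functor $H_X$ annihilates $\cu_{w\le0}$, so it has weight range $\ge1$ (Definition~\ref{drange}), whereas for $Y\in C'_1$ the functor $H_Y$ annihilates $\cu_{w\ge1}$, so it has weight range $\le0$. Proposition~\ref{pwrange}(\ref{iwrperp}) then forbids nonzero transformations $H_X\to H_Y$, and reflection converts this into $\du(X,Y)=0$; hence $C'_2[1]\perp C'_1$.

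Part~(\ref{irefl4}) is the crux. The inclusions $\cu'_{t\le0}\subseteq C'_1$ and $\cu'_{t\ge0}\subseteq C'_2$ are read off Proposition~\ref{pwrange}(\ref{iwrort}), which gives $H_N$ of weight range $\le0$ for $N\in\cu'_{t\le0}$ and $\ge0$ for $N\in\cu'_{t\ge0}$. For the converse I would invoke the aisle characterizations $\cu'_{t\le0}=(\cu'_{t\ge1})^{\perp}$ and $\cu'_{t\ge0}={}^{\perp}\cu'_{t\le-1}$ of Remark~\ref{rtst}(3), the perpendicularity being computed inside $\cu'$ and hence inside $\du$ by fullness. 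Given $Y\in C'_2$ (so $H_Y$ of weight range $\ge0$) and any $Z\in\cu'_{t\le-1}$, the functor $H_Z=H_{Z[1]}\circ[1]$ is a $[1]$-shift of a weight-range-$\le0$ functor, hence of weight range $\le-1$; Proposition~\ref{pwrange}(\ref{iwrperp}) plus reflection give $\du(Y,Z)=0$, so $Y\in\cu'_{t\ge0}$. Dually, for $X\in C'_1$ (so $H_X$ of weight range $\le0$) and $W\in\cu'_{t\ge1}$ one finds $H_W$ of weight range $\ge1$ and obtains $\du(W,X)=0$, so $X\in\cu'_{t\le0}$. This proves $t=(C'_1,C'_2)$, i.e. strict right orthogonality.

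It remains to see that $\hrt\to\adfu(\hw\opp,\ab)$ is fully faithful. Each $N\in\cu'_{t=0}$ makes $H_N$ of weight range $[0,0]$ (Proposition~\ref{pwrange}(\ref{iwrort})), so it lies in the subcategory of weight-range-$[0,0]$ functors; reflection gives $\hrt(N,N')=\du(N,N')=\adfu(\cu\opp,\ab)(H_N,H_{N'})$, and the equivalence of Proposition~\ref{pwrange}(\ref{iwrpure}) (restriction along $\hw\hookrightarrow\cu$) identifies this last group with transformations $H_N|_{\hw}\to H_{N'}|_{\hw}$; composing the two identifications is exactly the claimed full faithfulness. I expect the only real friction to be in the reverse inclusions of part~(\ref{irefl4}): one must track the weight-range indices correctly through the shifts relating $\cu'_{t\le-1}$, $\cu'_{t\ge1}$ to $\cu'_{t\le0}$, $\cu'_{t\ge0}$, and must ensure that each vanishing established in $\adfu(\cu\opp,\ab)$ is legitimately pulled back to $\du$ — which is exactly where reflection, and the fact that all objects in sight belong to $\cu'$, are used.
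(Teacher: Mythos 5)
Your proposal is correct and follows essentially the same route as the paper: Yoneda for part (\ref{irefl1}), the weight-range estimates of Proposition \ref{pwrange}(\ref{iwrort}) combined with Proposition \ref{pwrange}(\ref{iwrperp}) and reflection for parts (\ref{irefl3}) and (\ref{irefl4}), and Proposition \ref{pwrange}(\ref{iwrpure}) for the full faithfulness of $\hrt\to \adfu(\hw\opp,\ab)$. The only cosmetic difference is that in part (\ref{irefl4}) the paper deduces the reverse inclusions from part (\ref{irefl3}) together with the inclusions $\cu'_{t\le 0}\subset C'_1$ and $\cu'_{t\ge 0}\subset C'_2$, whereas you inline the same weight-range-plus-reflection computation against $\cu'_{t\le -1}$ and $\cu'_{t\ge 1}$ directly.
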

\begin{proof}
Assertion  \ref{irefl1}  immediately follows from the Yoneda lemma.

\ref{irefl3}. If $M_1$ belongs to $ \cu_{w\ge 1} ^{\perp_{\du}}$ 
 and $M_2\in  \cu_{w\le 0}^{\perp_{\du}}$ then the 
  $\du$-Yoneda functor $H_{M_1}:\cu\opp\to \ab$ is  of weight range $\le 0$ and $H_{M_2}$  is of weight range $\ge 1$ (see the obvious Proposition \ref{pwrange}(\ref{iwrort})). 
 By Proposition \ref{pwrange}(\ref{iwrperp}) we obtain that there are no non-zero transformations between $H_{M_2}$ and $H_{M_1}$.

Next we assume in addition that $M_1$ and $M_2$ are objects of $\cu'$. Since $\cu$ reflects $\cu'$ this vanishing of transformations statements implies that $M_2\perp M_1$; thus $C'_2[1]\perp C'_1$ indeed.   

\ref{irefl4}. Assume that $t$ is   
 orthogonal to $w$. Then the definition of orthogonality says that  $\cu'_{t\le 0}\subset C'_1$ and  $\cu'_{t\ge 0}\subset C'_2$. 
On the other hand, recall that $\cu'_{t\ge 0}=^{\perp_{\cu'}} \cu'_{t\le -1}$ and $\cu_{t\le 0}=(\cu'_{t\ge 1})^{\perp_{\cu'}} $ (see Remark \ref{rtst}(3)). Since $C'_2\perp C'_1[1]$, we obtain that the converse inclusions are valid as well; thus $t$ is strictly right orthogonal to $w$. 

Next, if $M\in \cu'_{t=0}$ then the definition of orthogonality implies that the  functor $H_M:\cu\opp\to \ab$ (see Definition \ref{dvtt}(3)) is of weight range $[0,0]$. Thus it suffices to recall that $\cu$ reflects $\cu'$ 
 and apply Proposition \ref{pwrange}(\ref{iwrpure}).
\end{proof}

\begin{rema}\label{rhop}
 Our proposition implies that $w$ determines a $t$-structure that is (right) adjacent to it uniquely, and this $t$-structure is strictly right orthogonal to $w$. 
 Certainly, $t$ determines a weight structure that is left adjacent to it uniquely as well; see Proposition \ref{portadj} and Proposition \ref{pbw}(\ref{iort}). Moreover, this argument easily extends to arbitrary {\it Hom-orthogonal pairs} (see Definition 3.1 of \cite{postov}); in particular, it works for {\it torsion theories} (see \S2 of \cite{bvt} and Definition 2.2 of  \cite{iyayo}).

Moreover, the author conjectures that this uniqueness statements are valid for orthogonal torsion theories whenever $\cu$ reflects $\cu'$. 
\end{rema}

\begin{pr}\label{padjpr}
 Assume that $\cu$ reflects $\cu'$ 
(see Definition \ref{drefl}; here we adopt its notation),  $w$ is a weight structure on $\cu$,  $M\in \obj \cu'$, and 
 that for the functor $H_M:\cu\opp\to \ab$  (see Definition \ref{dvtt}(3))  its virtual $t$-truncation $\tau_{\ge 0}H_M$ is represented by some object $M_{\ge 0}$ of $\cu'$.

Then the following statements are valid.

\begin{enumerate}
\item\label{ile1} $M_{\ge 0}$ belongs to $\cu_{w\le -1}^{\perp_{\du}}\cap \obj \cu'$. 

\item\label{ile2}
The natural transformation $\tau_{\ge 0 }(H_M)\to H_M$ mentioned in (\ref{evtt}) is induced by some $f\in \cu'(M_{\ge 0},M)$. 

\item\label{ile3} The object $M_{\le -1}=\co(f)$ belongs to $\cu_{w\ge 0}^{\perp_{\du}}\cap \obj \cu'$. 

\item\label{ile5} 
For $M'\in \obj \cu'$ the $\du$-representability of the  functor  $\tau_{\ge 0}H_{M'}$ by an object of $\cu'$ is equivalent to that of  $\tau_{\le - 1}H_{M'}$.
\end{enumerate}

\end{pr}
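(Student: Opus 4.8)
The plan is to treat the four assertions in order, disposing of \ref{ile1} and \ref{ile2} immediately, concentrating the real work on \ref{ile3}, and then deducing \ref{ile5} from the first three via a short-exact-sequence computation.

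For \ref{ile1} I would just observe that $M_{\ge 0}$ represents $\tau_{\ge 0}H_M$, which is of weight range $\ge 0$ by Proposition \ref{pwrange}(\ref{iwrvt}); hence $H_{M_{\ge 0}}\cong\tau_{\ge 0}H_M$ annihilates $\cu_{w\le -1}$, which is precisely $M_{\ge 0}\in\cu_{w\le -1}^{\perp_{\du}}\cap\obj\cu'$. For \ref{ile2}, after the identification $\tau_{\ge 0}(H_M)\cong H_{M_{\ge 0}}$ the transformation of (\ref{evtt}) becomes a morphism $H_{M_{\ge 0}}\to H_M$ in $\adfu(\cu\opp,\ab)$; since $\cu$ reflects $\cu'$ the Yoneda map $\cu'(M_{\ge 0},M)\to\adfu(\cu\opp,\ab)(H_{M_{\ge 0}},H_M)$ is bijective (Definition \ref{drefl}), so this transformation is induced by a unique $f\in\cu'(M_{\ge 0},M)$.

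The heart of the matter is \ref{ile3}. Writing $\nu\colon\tau_{\ge 0}(H_M)\to H_M$ for the transformation of (\ref{evtt}), I would first record how $\nu$ acts pointwise, reading both facts off the image-description of virtual $t$-truncations in Definition \ref{dvtt}(1): for $X\in\cu_{w\ge 0}$ one may take all weight truncations trivial, so $\tau_{\ge 0}(H_M)(X)=H_M(X)$ and $\nu_X$ is an isomorphism; while for $Y\in\cu_{w\ge -1}$ one may take $w_{\ge -1}Y=Y$, whence $\tau_{\ge 0}(H_M)(Y)=\imm(H_M(w_{\ge 0}Y)\to H_M(Y))$ is a subobject of $H_M(Y)$ and $\nu_Y$ is its inclusion, hence a monomorphism. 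Now put $M_{\le -1}=\co(f)$ and fix the distinguished triangle $M_{\ge 0}\xrightarrow{f}M\xrightarrow{p}M_{\le -1}\xrightarrow{\partial}M_{\ge 0}[1]$. Applying $\du(X,-)$ for $X\in\cu_{w\ge 0}$ (note $X[-1]\in\cu_{w\ge -1}$) gives an exact sequence in which the map $\du(X,M_{\ge 0})\to\du(X,M)$ is $\nu_X$ and the map $\du(X,M_{\ge 0}[1])\to\du(X,M[1])$ is $\nu_{X[-1]}$; since $\nu_X$ is surjective the map $\du(X,M)\to\du(X,M_{\le -1})$ vanishes, and since $\nu_{X[-1]}$ is injective the map $\du(X,M_{\le -1})\to\du(X,M_{\ge 0}[1])$ vanishes, so by exactness $\du(X,M_{\le -1})=\ns$. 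As $M_{\le -1}=\co(f)\in\obj\cu'$, this is $M_{\le -1}\in\cu_{w\ge 0}^{\perp_{\du}}\cap\obj\cu'$. The delicate point, and what I expect to be the main obstacle, is the identification of the abstract transformation $\nu$ of (\ref{evtt}) with these concrete inclusions; this has to be extracted from the construction of virtual $t$-truncations in \cite{bger}.

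Finally, for \ref{ile5} I would upgrade the triangle of \ref{ile3} to a short exact sequence of functors. Applying $\du(X,-)$ to $M_{\ge 0}\xrightarrow{f}M\xrightarrow{p}M_{\le -1}\xrightarrow{\partial}M_{\ge 0}[1]$ produces an exact sequence in $\adfu(\cu\opp,\ab)$; since $H_{M_{\ge 0}}=\tau_{\ge 0}H_M$ is of weight range $\ge 0$ (so $H_{M_{\ge 0}}\circ[-1]$ is of weight range $\ge 1$) and $H_{M_{\le -1}}$ is of weight range $\le -1$ by \ref{ile3} (so $H_{M_{\le -1}}\circ[1]$ is of weight range $\le -2$), the kernel of $\nu$ is of weight range $[0,-2]$, being a quotient of $H_{M_{\le -1}}\circ[1]$ and a subobject of $H_{M_{\ge 0}}$, and the cokernel of $p_*$ is of weight range $[1,-1]$ likewise; both therefore vanish by Proposition \ref{pwrange}(\ref{iwrvan}). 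Hence $0\to H_{M_{\ge 0}}\xrightarrow{\nu}H_M\xrightarrow{p_*}H_{M_{\le -1}}\to 0$ is exact. It then remains to prove the elementary claim that for any short exact sequence $0\to F\to G\to Q\to 0$ in $\adfu(\cu\opp,\ab)$ with $F$ of weight range $\ge 0$ and $Q$ of weight range $\le -1$ one has $\tau_{\ge 0}(G)\cong F$ and $\tau_{\le -1}(G)\cong Q$; this is checked directly from Definition \ref{dvtt}(1) by evaluating at $w_{\ge 0}M,w_{\ge -1}M$ (resp. $w_{\le 0}M,w_{\le -1}M$) and using that $Q$ (resp. $F$) vanishes on $\cu_{w\ge 0}$ (resp. $\cu_{w\le -1}$). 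Applying the claim to the above sequence identifies $\tau_{\le -1}(H_M)$ with $H_{M_{\le -1}}$, so representability of $\tau_{\ge 0}H_M$ by $M_{\ge 0}\in\cu'$ entails that of $\tau_{\le -1}H_M$ by $M_{\le -1}=\co(f)\in\cu'$. The reverse implication is the evident dual statement (replace $\cu$ by $\cu\opp$ with the opposite weight structure, cf. Proposition \ref{pbw}(\ref{idual}) and Proposition \ref{pwfil}(5)): a representing object $M_{\le -1}$ of $\tau_{\le -1}H_M$ yields via reflection a morphism $M\to M_{\le -1}$ whose fibre $M_{\ge 0}$ satisfies the duals of \ref{ile1}--\ref{ile3} and, by the same short-exact-sequence computation, represents $\tau_{\ge 0}H_M$.
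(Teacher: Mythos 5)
Your parts (\ref{ile1}) and (\ref{ile2}) coincide with the paper's proof (weight range of $\tau_{\ge 0}H_M$ via Proposition \ref{pwrange}(\ref{iwrvt}); lifting the transformation through the fully faithful Yoneda-type functor supplied by reflection). For part (\ref{ile3}) your skeleton is also exactly the paper's: apply $\du(X,-)$ to the triangle and conclude from the surjectivity of $\nu_X$ and the injectivity of $\nu_{X[-1]}$ for $X\in\cu_{w\ge 0}$. But the ``delicate point'' you flag --- identifying the abstract transformation of (\ref{evtt}) with the concrete inclusions read off Definition \ref{dvtt}(1) --- is an unnecessary detour: both pointwise facts follow from black boxes the paper already provides, namely the exactness of (\ref{evtt}) combined with the vanishing $\tau_{\le -1}(H_M)(N)=\ns$ for $N\in \cu_{w\ge 0}$ (Proposition \ref{pwrange}(\ref{iwrvt})). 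That is literally how the paper argues (surjectivity of $h^1_N$ and injectivity of $h^2_N$ there), so part (\ref{ile3}) is fine once you route the justification this way.

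The genuine error is in part (\ref{ile5}): the sequence $0\to H_{M_{\ge 0}}\to H_M\to H_{M_{\le -1}}\to 0$ is \emph{not} exact in general. Your derivation applies Proposition \ref{pwrange}(\ref{iwrvan}) to $\ke(\nu)$ and $\cok(p_*)$, but weight range and the vanishing statement \ref{iwrvan} are defined and proved only for \emph{cohomological} functors (its proof uses the long exact sequence (\ref{eles})), whereas kernels and cokernels of transformations between cohomological functors need not be cohomological, and the vanishing genuinely fails for them. Concretely, take the adjacent case $\cu=\cu'=\du$ with $t$ right adjacent to $w$ (e.g.\ the canonical $t$-structure on $D^{perf}(k[x,y])$, right adjacent to the stupid weight structure); then $M_{\ge 0}\cong t_{\ge 0}M$ and $M_{\le -1}\cong t_{\le -1}M$ by Proposition \ref{pwfil}(4), and exactness of your sequence evaluated at $X=M_{\le -1}$ would produce a section of $p\colon M\to t_{\le -1}M$, splitting the $t$-decomposition triangle; non-split $t$-decompositions exist (glue along a nonzero class in $\operatorname{Ext}^2_{k[x,y]}(k,k)$). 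Indeed, by the Yoneda lemma your claim would force \emph{every} $t$-decomposition triangle to split. The three functors are related by the non-degenerate long exact sequence (\ref{evtt}), not by a short one; this is precisely why the paper does not argue this way but instead invokes Theorem 2.3.1(III.4) of \cite{bger} to identify $H_{M'_{\le -1}}$ with $\tau_{\le -1}H_{M'}$. A secondary flaw in the same part: the converse implication is not ``the evident dual statement'', because the hypothesis that $\cu$ reflects $\cu'$ is not self-dual (dualizing replaces the Yoneda functor $M\mapsto H_M$ by the co-Yoneda one); the correct converse reruns the same argument with the roles of the two truncations exchanged, lifting $H_{M'}\to\tau_{\le -1}(H_{M'})$ via the \emph{same} reflection hypothesis.
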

\begin{proof}
1.  It suffices to recall that $\tau_{\ge 0}H_M$ is of weight range $\ge 0$ according to  Proposition \ref{pwrange}(\ref{iwrvt})). 

2. This transformation 
 lifts to $\cu'$ since $\cu$ reflects $\cu'$. 

3. For any $N\in \obj \cu$ applying the functor $H^N=\du(N,-)$ to the distinguished triangle $M_{\ge 0}\to M \to M_{\le -1} \to M_{\ge 0}[1]$ one obtains a long exact sequence that yields the following short one: \begin{equation} \label{eshort}
\begin{gathered} 0\to \cok(H^N(M_{\ge 0})\stackrel{h^1_N}{\to} H^N(M))\to H^N(M_{\le -1})\\
 \to \ke (H^N(M_{\ge 0}[1])\stackrel{h^2_N}{\to} H^N(M[1]))\to 0
\end{gathered}
\end{equation} 
So, for any $N\in \cu_{w\ge  0}$ we should check that the homomorphism $h^1_N$ is surjective and $h^2_N$ is injective.

Applying  (\ref{evtt}) to the functor $H_M$ (in the case $n=0$) 
we obtain a long exact sequence of functors 
\begin{equation} \label{evttp} 
\dots\to \tau_{\ge 0 }(H_M)\to H_M\to \tau_{\le -1}(H_M)\to \tau_{\ge 0}(H_M)\circ [1]\to H_M\to \dots
\end{equation}
 Applying this sequence of functors to $N$ we obtain that the surjectivity of  $h^1_N$ along with the injectivity of $h^2_N$ is equivalent to  $\tau_{\le -1}(H_M)(N)=\ns$. 
 Recalling that $\tau_{\le -1}(H_M)$ is of weight range $\le -1$ according to Proposition \ref{pwrange}(\ref{iwrvt}), we conclude the proof.

4. If  $\tau_{\ge 0}H_{M'}$ is representable then the previous assertions imply the existence of a $\du$-distinguished triangle $M'_{\ge 0}\to M' \to M'_{\le -1} \to M'_{\ge 0}[1]$ with $M'_{\le -1}\in \cu_{w\ge 0}^{\perp_{\du}}\cap \obj\cu'$. Then the object $M'_{\le -1} $ $\du$-represents the functor  $\tau_{\le -1}H_{M'}$ according to Theorem 2.3.1(III.4) of \cite{bger} (and so, $\tau_{\le -1}H_{M'}$ is representable). The proof of the converse implication is similar.
\end{proof}

\begin{rema}\label{rprefl}
1. As we have already noted in Remark \ref{rort}(\ref{irort1}), it is not really necessary to assume that $\cu$ and $\cu'$ lie in some common triangulated category $\du$. However, the author does not now of any examples such that no $\du$ exists but $\cu$ reflects $\cu'$ in the easily defined generalized sense of this notion (cf. Definition \ref{ddual} below).

On the other hand, the main statements needed for the construction of orthogonal $t$-structures below are Proposition \ref{prefl}(\ref{irefl3}) and Proposition \ref{padjpr}(\ref{ile2}). The author does not know of any "abstract" conditions on the categories $\cu$ and $\cu'$, and the duality $\Phi:\cu\opp\times \cu'\to \au$ 
 that would allow to generalize our current 
  proofs of these statements. However, it appears that 
 if $\cu$  possesses a "model" that allows to define a triangulated derived category $\cu'$ of "reasonable" functors $\cu\to \au$ (this is certainly the case when $\cu$ is a triangulated subcategory of the homotopy category of a stable model category) then the (corresponding version of) Proposition \ref{padjpr}(\ref{ile2}) is fulfilled. Next, one can probably lift the square (\ref{euni}) to $\cu'$ to obtain eventually that the corresponding version of  Proposition \ref{prefl}(\ref{irefl3}) is valid as well.

Possibly, the author will study this matter in a succeeding paper. 

2. The author does not know whether it makes much sense to take $\au\neq \ab$ in the argument that we have just sketched. Note however that we could have considered a duality with values in $\au=R-\modd$ throughout section \ref{scoh} below.
\end{rema}

Now we are able to prove our main abstract criterion on the existence of an orthogonal $t$-structure.

\begin{theo}\label{trefl}
Assume that 
a triangulated subcategory $\cu$ of $\du$ reflects $\cu'\subset \du$ 
 (see Definition \ref{drefl}); let $w$ be a weight structure on $\cu$. 

 Then the following conditions are equivalent.

(i). There exists a $t$-structure $t$ on $\cu'$ right orthogonal to $w$. 
 
(ii). The functor  $\tau_{\ge 0}H_{M'}$ is $\du$-representable by an object of $\cu'$ for any object $M'$ of $\cu'$. 

(iii). The functor  $\tau_{\le -1}H_{M'}$ is $\du$-representable by an object of $\cu'$ for any object $M'$ of $\cu'$. 

(iv).  For any object $M'$ of $\cu'$ and $i\in \z$ the functors  $\tau_{\ge i}H_{M'}$ and $\tau_{\le i}H_{M'}$ are $\du$-representable by objects of $\cu'$.
\end{theo}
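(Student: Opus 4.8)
The plan is to establish the cycle (i) $\Rightarrow$ (iv) $\Rightarrow$ (ii) $\Rightarrow$ (i) and to treat (ii) $\Leftrightarrow$ (iii) on the side; since Propositions \ref{prefl} and \ref{padjpr} already carry essentially all of the analytic content, the work here is to assemble them and to verify the axioms of Definition \ref{dtstr} for an explicitly prescribed couple.

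I would begin with the cheap implications. The implication (i) $\Rightarrow$ (iv) is immediate from Proposition \ref{pwfil}(4): if a right orthogonal $t$ exists then for every $M'\in\obj\cu'$ and $i\in\z$ the functors $\tau_{\ge i}H_{M'}$ and $\tau_{\le i}H_{M'}$ are $\du$-represented by $t_{\ge i}M'$ and $t_{\le i}M'$, which are objects of $\cu'$. The implication (iv) $\Rightarrow$ (ii) is the specialization $i=0$, and (ii) $\Leftrightarrow$ (iii) is exactly Proposition \ref{padjpr}(\ref{ile5}). (One can also see (ii) $\Rightarrow$ (iv) without invoking (i): on $\cu$ one has $H_{M'}\circ[i]=H_{M'[-i]}$, so $\tau_{\ge i}H_{M'}\cong\tau_{\ge 0}(H_{M'[-i]})\circ[-i]$ by Proposition \ref{pwfil}(6), and a representing object $N\in\cu'$ for $\tau_{\ge 0}H_{M'[-i]}$ yields the representing object $N[i]$ for $\tau_{\ge i}H_{M'}$; the $\tau_{\le i}$ case follows via (iii).)

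The substance of the theorem is the implication (ii) $\Rightarrow$ (i), where I would construct $t$ directly. Take as candidate classes
\[ \cu'_{t\ge 0}=\cu_{w\le -1}^{\perp_{\du}}\cap\obj\cu',\qquad \cu'_{t\le 0}=\cu_{w\ge 1}^{\perp_{\du}}\cap\obj\cu', \]
so that $\cu'_{t\le -1}=\cu_{w\ge 0}^{\perp_{\du}}\cap\obj\cu'$. Then I would verify Definition \ref{dtstr}(i)--(iv) in turn. Strictness (i) holds because orthogonal complements are strict. Shift invariance (ii) is forced by the semi-invariance of $w$: from $\cu_{w\le -1}\subset\cu_{w\le 0}$ one gets $\cu_{w\le 0}^{\perp_{\du}}\subset\cu_{w\le -1}^{\perp_{\du}}$, i.e. $\cu'_{t\ge 0}[1]\subset\cu'_{t\ge 0}$, and dually for $\cu'_{t\le 0}$. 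The orthogonality axiom (iii), namely $\cu'_{t\ge 1}\perp\cu'_{t\le 0}$, is precisely Proposition \ref{prefl}(\ref{irefl3}). For the decomposition axiom (iv) --- the only place hypothesis (ii) enters --- I would feed each $M'\in\obj\cu'$ into Proposition \ref{padjpr}: representability of $\tau_{\ge 0}H_{M'}$ supplies an object $M'_{\ge 0}\in\cu'$ representing it, a lift $f\in\cu'(M'_{\ge 0},M')$ of the transformation in (\ref{evtt}), and a cone $M'_{\le -1}=\co(f)$, which lies in $\cu'$ since $\cu'$ is a triangulated subcategory. Parts (\ref{ile1}) and (\ref{ile3}) of that proposition read $M'_{\ge 0}\in\cu'_{t\ge 0}$ and $M'_{\le -1}\in\cu'_{t\le -1}$, so the triangle $M'_{\ge 0}\to M'\to M'_{\le -1}\to M'_{\ge 0}[1]$ is a $t$-decomposition of $M'$. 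Finally, because the defining classes are orthogonal complements of shifts of $\cu_{w\le 0}$ and $\cu_{w\ge 0}$, unwinding Definition \ref{dort}(1) shows that the $t$ so produced is right orthogonal to $w$.

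I do not anticipate a genuine obstacle here: the analytic weight has been discharged into Propositions \ref{prefl} and \ref{padjpr}, and what remains is careful bookkeeping. The points that most demand attention are matching the shift conventions so that $\cu'_{t\le -1}=\cu_{w\ge 0}^{\perp_{\du}}\cap\obj\cu'$ coincides with the target of Proposition \ref{padjpr}(\ref{ile3}), and confirming that $\co(f)$ may be formed inside $\cu'$ rather than only in $\du$. The one genuinely structural hypothesis underlying everything is that $\cu$ reflects $\cu'$; this is what both auxiliary propositions rest on, and in particular what guarantees that the transformation in (\ref{evtt}) lifts to an honest morphism $f$ of $\cu'$.
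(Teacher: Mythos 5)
Your proposal is correct and takes essentially the same route as the paper's own proof: the same candidate couple $(\cu_{w\ge 1}^{\perp_{\du}}\cap\obj\cu',\ \cu_{w\le -1}^{\perp_{\du}}\cap\obj\cu')$, with Proposition \ref{prefl}(\ref{irefl3}) giving the orthogonality axiom, Proposition \ref{padjpr}(\ref{ile1}--\ref{ile3}) giving the $t$-decompositions, and Proposition \ref{pwfil}(4,6) together with Proposition \ref{padjpr}(\ref{ile5}) handling the remaining implications. The differences are purely organizational (you arrange the implications as a cycle and spell out the strictness and shift axioms that the paper calls obvious), so there is nothing to correct.
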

\begin{proof}
  Condition (i) implies conditions (ii) and (iii) according to Proposition \ref{pwfil}(4). Next, conditions (ii) and (iii) are equivalent by Proposition \ref{padjpr}(\ref{ile5}). Moreover, conditions (iv) certainly implies conditions (ii) and (iii), whereas the reverse implication easily follows from Proposition \ref{pwfil}(6).

It remains to prove that condition (ii) implies (i). So,  we should check that the couple $(C'_1,C'_2)$, where $C'_1= \cu_{w\ge 1}^{\perp_{\du}}\cap \obj \cu'$  and $C'_2=\cu_{w\le -1}^{\perp_{\du}}\cap \obj \cu'$ (cf. Proposition \ref{prefl}(\ref{irefl3},\ref{irefl4})) is a $t$-structure.  Now, these classes are certainly closed with respect to $\cu'$-isomorphisms, and the "shift" axiom (ii) of Definition \ref{dtstr} is obviously fulfilled as well. Next, the orthogonality axiom (iii) is given by Proposition \ref{prefl}(\ref{irefl3}).

Hence it remains to check the existence of $t$-decompositions (this is axiom (iv) of $t$-structures), which 
 immediately follows from Proposition \ref{padjpr}(\ref{ile1}--\ref{ile3}).
\end{proof}

Let us also prove (one more) corollary from Proposition \ref{prefl}.  We will apply it in \S\ref{satur} for $\cu'=\cu$; note however that in this case one can avoid using Proposition \ref{prefl}.

\begin{pr}\label{psingen}
Assume that $\cu$ is densely generated by a single object $G$ and $w$ is a bounded weight structure on $\cu$.

1. Then there exists $N\in\z$ such that the class $\cu_{w\le 0}$ is contained in the envelope (see \S\ref{snotata}) of $\{G[j]:\ j<N\}$ and $\cu_{w\ge 0}$ 
 lies in the envelope of $\{G[j]:\ j>-N\}$.

2. Assume that $\cu(G,G[j])=\ns$ for $j\ll 0$, $\cu'\subset \cu$, and there exists a $t$-structure  on $\cu'$ that is right orthogonal to $w$. Then $\cu'_{t\ge 0}= \cu_{w\ge 0}^{\perp_{\du}}\cap \obj \cu'$ and there exists $N'\in\z$ such that $\cu_{t\le 0}\supset \cu_{w\le -N'}\cap \obj \cu'$; 
 hence $t$ is bounded as well.
	
\end{pr}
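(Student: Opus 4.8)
The plan is to deduce both inclusions of the first assertion from a single uniform finite-generation property of the heart, and then to feed this into the second assertion. First I would use boundedness (Definition \ref{dwso}(\ref{idbob})) to fix $a\le b\in\z$ with $G\in\cu_{[a,b]}$. A preliminary observation is that the envelope $E$ of $\{G[j]:j\in\z\}$ is all of $\obj\cu$: the class $E$ is additive, retraction-closed and extension-closed, and it is stable under $[\pm1]$ since shifting permutes its generators; but a shift-stable extension-closed class is automatically closed under cones (rotating $A\to B\to Z\to A[1]$ exhibits $Z$ as an extension of $A[1]$ by $B$), so $E$ is a retraction-closed triangulated subcategory containing $G$ and hence $E\supseteq\lan G\ra=\obj\cu$. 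In particular every object of $\cu$ is built from \emph{finitely many} shifts of $G$.

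The crux is the claim that $\cu_{w=0}=\kar(\operatorname{add}(\{H_k:a\le k\le b\}))$, where $H_a,\dots,H_b\in\cu_{w=0}$ are the weight-graded pieces of $G$ (so $H_k[k]\in\cu_{w=k}$ and $G$ is an iterated extension of the $H_k[k]$, obtained by iterating weight decompositions and Proposition \ref{pbw}(\ref{iwd0})). Given $H\in\cu_{w=0}$, the previous paragraph together with the standard fact that any object of an envelope is a retract of an iterated extension of finite sums of the generators shows that $H$ is a retract of some $X$ which is an iterated extension of shifts of $G$, hence of shifts $H_k[j]$. Writing $W=w_{\ge0}w_{\le0}X\in\cu_{w=0}$ (Proposition \ref{pbw}(\ref{iwd0})), the orthogonality axiom forces the two structural maps $H\to X\to H$ to factor through $W$, so $H$ is a retract of $W$; and $W$ is a finite direct sum of those $H_k$. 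This last point — that the weight-zero part of an extension is the sum of the weight-zero parts of its ends — is the main obstacle; it rests on the splitting of heart extensions (Proposition \ref{pbw}(\ref{isplit})) together with the behaviour of the weak weight complex functor of \cite{bws}, and is the only place where serious weight-structure machinery is needed.

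Granting the claim, the finitely many objects $H_k$ lie in $E$, hence in the envelope of $\{G[j]:|j|\le L\}$ for some $L$, whence $\cu_{w=0}$ is contained in this envelope. Setting $N=L+1$, for every $k\le0$ we get $\cu_{w=k}=\cu_{w=0}[k]\subseteq$ the envelope of $\{G[j]:j<N\}$; since by Proposition \ref{pbw}(\ref{igenlm}) and boundedness below $\cu_{w\le0}$ is the envelope of $\bigcup_{k\le0}\cu_{w=k}$, this gives $\cu_{w\le0}\subseteq$ the envelope of $\{G[j]:j<N\}$, and the inclusion for $\cu_{w\ge0}$ is dual.

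For the second assertion I would argue as follows. Since $\cu'\subset\cu$, Proposition \ref{prefl}(\ref{irefl1}) gives that $\cu$ reflects $\cu'$, so Proposition \ref{prefl}(\ref{irefl4}) shows $t$ is strictly right orthogonal to $w$; this yields at once the asserted perp-description of the aisles of $t$, and in particular that an $M\in\obj\cu'$ lies in $\cu'_{t\le0}$ as soon as $\cu(\cu_{w\ge1},M)=\ns$ (all perps may be computed inside $\cu$). Thus for the inclusion $\cu_{w\le-N'}\cap\obj\cu'\subseteq\cu'_{t\le0}$ it suffices to find $N'$ with $\cu(\cu_{w\ge1},\cu_{w\le-N'})=\ns$. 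By the first assertion $\cu_{w\ge1}$ lies in the envelope of $\{G[i]:i>1-N\}$ and $\cu_{w\le-N'}$ in the envelope of $\{G[i]:i<N-N'\}$; since a (co)homological functor vanishing on a set of objects vanishes on its whole envelope, it is enough that $\cu(G,G[s])=\ns$ for all $s\le 2N-3-N'$, which holds for $N'\ge 2N-3+j_0$ by the hypothesis that $\cu(G,G[j])=\ns$ for $j\le-j_0$. Finally $t$ is bounded below, because any $M\in\obj\cu'$ lies in some $\cu_{w\ge c}$ and then $\cu_{w\le c-1}\perp\cu_{w\ge c}$ (Proposition \ref{pbw}(\ref{iort})) places $M$ in $\cu_{w\le c-1}^{\perp_\du}\cap\obj\cu'=\cu'_{t\ge c}$; and it is bounded above by shifting the inclusion just proved and using that $w$ is bounded above. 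Hence $t$ is bounded.
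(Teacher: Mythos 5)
Your proposal follows the same skeleton as the paper's own proof. Part 2 is essentially identical: both arguments invoke Proposition \ref{prefl} to get strict orthogonality, reduce the inclusion $\cu_{w\le -N'}\cap \obj\cu'\subset \cu'_{t\le 0}$ to the vanishing $\cu_{w\ge 1}\perp \cu_{w\le -N'}$, deduce that vanishing from assertion 1 together with the hypothesis $\cu(G,G[j])=\ns$ for $j\ll 0$ (you merely spell out the envelope bookkeeping that the paper calls "straightforward"), and then get boundedness of $t$ from boundedness of $w$. Part 1 also rests on the same reduction: via Proposition \ref{pbw}(\ref{igenlm}) and boundedness, everything comes down to showing that $\cu_{w=0}$ is generated, as an additive retraction-closed subcategory, by finitely many objects ($H_k$, resp. the $G^i$ of the paper) obtained by weight-truncating $G$. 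The one real difference is how this finiteness statement for the heart is handled: the paper cites Remark 2.3.5(2) of \cite{bwcp}, whereas you reprove it inline. Your retraction argument — that $H\in\cu_{w=0}$ being a retract of $X$ forces $H$ to be a retract of \emph{any} choice of $W=w_{\ge 0}w_{\le 0}X$, by two applications of the orthogonality axiom — is correct and is the right way to make this self-contained.

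However, the step you yourself flag as "the main obstacle" is not merely unproven; as literally stated it is false. Weight truncations are non-canonical, and "the weight-zero part of an extension" need not equal the sum of the weight-zero parts of its ends for an arbitrary choice: already for $X\cong 0$ (an extension of $0$ by $0$) one may choose $w_{\le 0}X$ to be any object $L$ of $\cu_{w=0}$, and then $W\cong L\neq 0$. What is true, and what you need, is that weight decompositions can be \emph{chosen} compatibly with a distinguished triangle (the relevant lemma of \cite{bws}, which is not among the statements of Proposition \ref{pbw} and must be invoked explicitly); iterating it over the extension tower of $X$ and splitting the resulting iterated heart-extension by Proposition \ref{pbw}(\ref{isplit}) produces one particular choice of $W$ that is a finite direct sum of the $H_k$. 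Since your retraction argument applies to every choice of $W$, this repairs the step — but note that this compatibility lemma then plays exactly the role that the citation of \cite{bwcp} plays in the paper, so your proof is not more elementary, only more explicit. A final minor point on part 2: what strictness actually yields is $\cu'_{t\ge 0}=\cu_{w\le -1}^{\perp_{\du}}\cap\obj\cu'$, which by Proposition \ref{pbw}(\ref{iort}) equals $\cu_{w\ge 0}\cap\obj\cu'$; the formula $\cu_{w\ge 0}^{\perp_{\du}}\cap\obj\cu'$ printed in the statement is a typo (it would identify $\cu'_{t\ge 0}$ with $\cu'_{t\le -1}$), so it does not follow "at once" from strictness. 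Your subsequent reasoning uses only the correct descriptions, so your argument is unaffected, but the discrepancy should be acknowledged rather than elided.
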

\begin{proof}
1. Since $w$ is bounded, applying Proposition \ref{pbw}(\ref{igenlm}) we reduce our assertion to the existence of $N$ such that $\cu_{w=0}$ lies in  the envelope of $\{G[j]:\ -N<j<N\}$. Now, Remark 2.3.5(2) of \cite{bwcp} says that there exists a finite set of $G^i\in \cu_{w=0}$\footnote{These objects are the terms of a bounded choice of a {\it weight complex} $t(G)$ of $G$.} such that any element of  $\cu_{w=0}$ is a retract of a direct sum of a (finite) collection of $G^i$. Since the set $\{G\}$ densely generates $\cu$, it remains to choose $N$ such that all of these $G^i$ belong to the envelope of $\{G[j]:\ -N<j<N\}$.

2. Combining Proposition \ref{prefl} with Proposition \ref{pbw}(\ref{iort}) we obtain that $\cu'_{t\ge 0}= \cu_{w\ge 0}^{\perp_{\du}}\cap \obj \cu'$ indeed. 
Hence if $\cu_{t\le 0}\supset \cu_{w\le -N'}\cap \obj \cu'$ then $t$ is bounded since $w$ is.

According to Remark \ref{rtst}(3), to verify the inclusion in question it suffices to check that $\cu_{w\le -N'}\perp \cu'_{t\ge 1}\subset \cu_{w\ge 1}$. 
Now, the existence of $N'$ satisfying this condition is straightforward from our assumption on $G$ along with assertion 1.
\end{proof}

\section{On $t$-structures orthogonal to smashing weight structures}\label{smashb} 


In this section we study the existence of adjacent weight and $t$-structures in triangulated categories closed with respect to (co)products (these are called smashing and cosmashing ones).

In \S\ref{smash} we consider 
 smashing weight structures (on smashing triangulated categories); these are the ones that respect coproducts.

In \S\ref{smashort}  we recall the notion of (dual) Brown representability for smashing triangulated categories, and 
prove Theorem \ref{tadjti}, i.e., that a weight structure $w$ on a category satisfying this condition is left adjacent to a $t$-structure if and only if $w$ is smashing. 
 Certainly, the dual to this statement is also valid; moreover, if $w$ is both cosmashing and smashing then the left adjacent $t$-structure $t$ restricts to the subcategory of compact objects of $\cu$ as well as to all other "levels of smallness" for objects. Combining this statement with an existence of weight structures theorem from \cite{kellerw} we obtain a statement on $t$-structures extending yet another result of ibid.  

In \S\ref{sort}  we study extensions of weight structures from subcategories of compact objects and the corresponding adjacent $t$-structures. 

\subsection{On smashing weight structures}\label{smash} 

We will need a few definitions.

\begin{defi}\label{dsmash}
\begin{enumerate}
\item\label{ismcat}
 We will say that a triangulated category $\cu$  is {\it (co)smashing} if it is closed with respect to (small) 
 coproducts (resp., products).

\item\label{ismw} We will say that a weight structure $w$ on $\cu$ is (co)smashing if $\cu$ is (co)smashing and the class $\cu_{w\ge 0}$ is closed with respect to $\cu$-coproducts (resp., $\cu_{w\le 0}$ is closed with respect to $\cu$-products; cf. Proposition \ref{pbw}(\ref{icoprod})). 

\item\label{ismt} We will say that a $t$-structure $t$ on $\cu$ is (co)smashing if $\cu$ is (co)smashing and the class $\cu_{t\le 0}$ is closed with respect to $\cu$-coproducts (resp., $\cu_{t\ge 0}$ is closed with respect to $\cu$-products; cf.  Remark \ref{rtst}(3)).

\item\label{idcc} 
It will be convenient for us to use the following somewhat clumsy  terminology: 
a cohomological functor  $H'$ from $\cu$ into $\au$ will be called a {\it cp} functor if it converts all (small) coproducts into $\au$-products.

\item\label{idal} 
For an infinite cardinal $\al$ 
 a homological functor $H:\cu\to \au$ 
 is said to be {\it $\al$-small} if for any family $N_i$, $i\in I$, we have
$H(\coprod_{i\in I} N_i)=\inli_{J\subset I,\ \#J<\al}H(\coprod_{j\in J}N_j)$ (i.e., the obvious morphisms $H(\coprod_{j\in J}N_j)\to H(\coprod N_i)$ form a colimit diagram; note that this colimit is filtered). 
\end{enumerate}
\end{defi}

Let us now prove some properties of these notions and relate them to $t$-truncations. 

\begin{pr}\label{psmash}
Assume that $w$ is a smashing weight structure on $\cu$, $H:\cu\to \au$ is a homological functor (where $\au$ is an abelian category), 
$i\in \z$,  and $\al$ is an infinite cardinal. 
  Then the following statements are valid.

\begin{enumerate}
\item\label{ismash1}
If $\al'\ge \al$ 
  then any $\al$-small functor is also $\al'$-small.

\item\label{ismash2} $H$ is $\alz$-small if and only if it respects coproducts.

\item\label{icoprh} The class $\cu_{w=0}$ is closed with respect to $\cu$-coproducts.

\item\label{icopr2} 
Coproducts of $w$-decompositions are weight decompositions as well.

\item\label{icopr6p} Assume that $\au$ is an AB4* 
  category and 
a cohomological functor $H'$ from $\cu$ into $ \au$ is a cp one. Then   $\tau_{\ge  i}(H')$  and $\tau_{\le i}(H')$  are cp functors as well.

\item\label{icopr6n} Assume that $\au$ is an AB5 
  category and $H$ is an $\al$-small functor.  
	 Then the functors $\tau_{\ge  i} (H)$  and $\tau_{\le  i}(H)$  are $\al$-small as well.
\end{enumerate} 
\end{pr}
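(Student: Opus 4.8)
The plan is to dispose of the formal assertions \ref{ismash2}, \ref{icoprh}, \ref{icopr2} first and then treat the two substantive ones, \ref{icopr6p} and \ref{icopr6n}, in parallel, followed by the monotonicity assertion \ref{ismash1}.

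For assertion \ref{ismash2} I would note that for a \emph{finite} set $J$ additivity of $H$ gives $H(\coprod_{j\in J}N_j)\cong\bigoplus_{j\in J}H(N_j)$, and that $\inli_{\#J<\alz}\bigoplus_{j\in J}H(N_j)$ is precisely a computation of the coproduct $\coprod_{i\in I}H(N_i)$ as the filtered colimit of its finite subsums; hence $\alz$-smallness of $H$ is exactly the statement that the natural map $\coprod_i H(N_i)\to H(\coprod_i N_i)$ is an isomorphism, i.e.\ that $H$ respects coproducts. Assertion \ref{icoprh} is immediate: $\cu_{w=0}=\cu_{w\ge0}\cap\cu_{w\le0}$, the first class being $\coprod$-closed since $w$ is smashing and the second by Proposition \ref{pbw}(\ref{icoprod}). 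For assertion \ref{icopr2} I would take weight decomposition triangles $L_wM_\beta\to M_\beta\to R_wM_\beta\to L_wM_\beta[1]$ of a family $\{M_\beta\}$, form their coproduct (which is distinguished since $\cu$ is smashing, by the standard fact that coproducts of distinguished triangles are distinguished), and observe that $\coprod L_wM_\beta\in\cu_{w\le0}$ by Proposition \ref{pbw}(\ref{icoprod}) while $\coprod R_wM_\beta\in\cu_{w\ge1}$ because $w$ is smashing; this exhibits the coproduct as a weight decomposition of $\coprod M_\beta$.

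For assertion \ref{ismash1} I would fix a family $\{N_i\}_{i\in I}$ and, for each $J'\subseteq I$ with $\#J'<\al'$, apply $\al$-smallness of $H$ to the subfamily $\{N_j\}_{j\in J'}$ to get $H(\coprod_{j\in J'}N_j)\cong\inli_{K\subseteq J',\,\#K<\al}H(\coprod_{k\in K}N_k)$. Substituting this into $\inli_{\#J'<\al'}H(\coprod_{j\in J'}N_j)$ turns it into a colimit over the filtered poset of pairs $(J',K)$ with $K\subseteq J'$, $\#K<\al$, $\#J'<\al'$. The projection $(J',K)\mapsto K$ onto $\{K:\#K<\al\}$ is then cofinal, the connectedness of the relevant comma categories following from the fact that for an infinite cardinal the union of two sets of cardinality $<\al$ (resp.\ $<\al'$) again has cardinality $<\al$ (resp.\ $<\al'$). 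Hence this colimit equals $\inli_{\#K<\al}H(\coprod_{k\in K}N_k)$, which is $H(\coprod_{i\in I}N_i)$ by $\al$-smallness of $H$; this is exactly $\al'$-smallness.

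Finally, assertions \ref{icopr6p} and \ref{icopr6n} I would treat by the same mechanism, the only difference being products versus filtered colimits and AB4* versus AB5. By assertion \ref{icopr2} I may choose, for a coproduct $\coprod_\beta M_\beta$ (resp.\ $\coprod_{i\in I}N_i$), its weight truncations to be the coproducts of chosen weight truncations of the summands, compatibly with the connecting morphisms of Definition \ref{dvtt}; the essential independence of virtual $t$-truncations of all choices (Proposition \ref{pwfil}(1)) guarantees these assemble into the correct functorial maps. Then, writing $\tau_{\ge i}(H')(\coprod_\beta M_\beta)$ as the image of $H'(\coprod_\beta w_{\ge i}M_\beta)\to H'(\coprod_\beta w_{\ge i-1}M_\beta)$ and using that $H'$ is a cp functor, this becomes the image of the product $\prod_\beta f_\beta$ of the individual connecting maps $f_\beta$; in assertion \ref{icopr6n}, using $\al$-smallness of $H$ in place of the cp property, $\tau_{\ge i}(H)(\coprod_I N_i)$ becomes the image of a filtered colimit $\inli_{\#J<\al}f_J$ of the maps $f_J$ computing $\tau_{\ge i}(H)(\coprod_{j\in J}N_j)$. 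The point at which the hypotheses enter, and the main obstacle, is the commutation of the image construction with the product (resp.\ the filtered colimit): this is exactly the exactness of products in an AB4* category (resp.\ of filtered colimits in an AB5 category), which yields $\imm(\prod_\beta f_\beta)\cong\prod_\beta\imm(f_\beta)$ (resp.\ $\imm(\inli_J f_J)\cong\inli_J\imm(f_J)$), giving the desired identifications. The $\tau_{\le i}$ cases are identical up to replacing the relevant truncations.
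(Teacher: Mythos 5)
Your proposal is correct and follows essentially the same route as the paper's proof: parts (1)--(4) by the same direct arguments (the paper delegates (3) and (4) to cited references whose content is exactly what you wrote), and parts (5)--(6) by taking coproducts of weight decompositions, identifying the connecting morphisms of the coproduct with the (co)products of the individual connecting morphisms, and then using AB4* exactness of products (resp.\ AB5 exactness of filtered colimits) to commute the image construction past the product (resp.\ colimit). Nothing essential differs beyond your spelling out the cofinality argument in (1) that the paper leaves as "one easily obtains".
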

\begin{proof}
\ref{ismash1}. Assume that 
 $H$ is an $\al$-small functor; fix  an index set $I$ and certain $N_i\in \obj \cu$. Then for any $J\subset I$ we have 
$H(\coprod_{j\in J} N_j)=\inli_{J'\subset J,\ \#J'<\al}H(\coprod_{j'\in J'}N_{j'})$. Combining these statements for all $J\subset I$ (actually, it suffices to take  $J=I$ along with $J$ of cardinality less than $\al'$ only here) one easily obtains that $H(\coprod_{i\in I} N_i)=\inli_{J\subset I,\ \#J<\al'}H(\coprod_{j\in J}N_j)$. 

\ref{ismash2}. 
Since $H$ is additive,   $H(\coprod N_i)=\inli_{J\subset I,\ \#J<\alz}H(\coprod_{j\in J}N_j)$ if and only if $H$  respects coproducts (since this colimit will not change if one will consider only those $J$ that consist of a single element only). 

\ref{icoprh}. This is an easy consequence of  Proposition \ref{pbw}(\ref{iort}); see Proposition 2.3.2(1) of \cite{bwcp}. 

\ref{icopr2}. This is an easy consequence of  Proposition \ref{pbw}(\ref{icoprod}) along with Remark 1.2.2 of \cite{neebook}; it is given by Proposition 2.3.2(3) of \cite{bwcp}.  

\ref{icopr6p}. 
According to Proposition \ref{pwfil}(6), it suffices to verify that the functors  $\tau_{\ge  0}(H')$  and $\tau_{\le -1}(H')$ are cp ones for any cp functor $H'$. 
For 
a family $\{M_i\}$ of objects of $\cu$ we choose certain $-1$ and $-2$-weight decompositions for all $M_i$ (see Remark \ref{rstws}(2)).
According to the previous assertion,  their coproducts give a $-1$ and a $-2$-weight decomposition of $\coprod M_i$, respectively. Moreover, one can certainly obtain the unique morphisms $w_{\le -2}(\coprod M_i)\to w_{\le -1}(\coprod M_i)$  and $w_{\ge -1}(\coprod M_i)\to w_{\ge 0}(\coprod M_i)$ compatible with these decomposition triangles (see Proposition \ref{pbw}(\ref{icompl}) and Definition \ref{dvtt}(1))  as the coproducts of the corresponding morphisms for $M_i$.  Applying our assumptions on $H'$ and $\au$ we obtain that  $\tau_{\ge  0}(H')(\coprod M_i)\cong \prod \tau_{\ge  0}(H')(M_i)$ and 
 $\tau_{\le  -1}(H')(\coprod M_i)\cong \prod \tau_{\ge  -1}(H')(M_i)$.

Similarly, to prove assertion \ref{icopr6n} it suffices  
 to verify that  the functors $\tau_{\ge  0}(H)$  and $\tau_{\le -1}(H)$ are $\al$-small whenever $H$ is. 
One takes the same weight decompositions along with their coproducts corresponding to all subsets $J$ of $I$ of cardinality less than $\al$. Since the colimits in question are filtered ones,  the AB5 assumption on $\au$ allows to compute 
$\inli_{J\subset I,\ \#J<\al}\imm(H(\coprod_{j\in J}w_{\le -2}N_j)\to H(\coprod_{j\in J}w_{\le -1}N_j))$ and 
$\inli_{J\subset I,\ \#J<\al}\imm(H(\coprod_{j\in J}w_{\ge -1}N_j)\to H(\coprod_{j\in J}w_{\ge -0}N_j))$
as the corresponding images of colimits to obtain the statement in question.
\end{proof}

\begin{rema}\label{rsmash}
1. The current version of Proposition \ref{psmash}(\ref{icopr6p}) is sufficient for the purposes of this paper; yet the following modification of this statement is quite useful also (and will be applied elsewhere).

So, assume that $\al$ is a  {\it regular} cardinal (i.e., it cannot be presented as a sum of less then $\al$ cardinals that are less than $\al$), the category $\cu$ is closed with respect to coproducts of cardinality less then $\al$, $w$ is a weight structure on $\cu$ such that $\cu_{w\ge 0}$ is closed with respect to $\cu$-coproducts of cardinality less then $\al$, $H'$ is a cohomological functor from $\cu$ into $ \au$ that converts coproducts of this sort into products, and products of cardinality less then $\al$ are exact on $\au$. Then the obvious modification of the proof of  Proposition \ref{psmash}(\ref{icopr6p}) yields that all virtual $t$-truncations of $H'$ also convert  $\cu$-coproducts of cardinality less then $\al$ into products.

2. The author suspects that is suffices to assume that $\au$ is an AB4 category in part \ref{icopr6n} of our proposition. At least, this is easily seen to be the case for $\al=\alz$, i.e., for functors that respect coproducts (cf. parts \ref{ismash2} and \ref{icopr6p} of the proposition). However, below we will only need the  $\au=\ab$ case of the statement.
\end{rema}

\subsection{On the existence of $t$-structures orthogonal to smashing weight structures}\label{smashort} 

To formulate the main results of this section we will need some more definitions.

\begin{defi}\label{dcomp}
Let  $\cu$ be a smashing triangulated category, $\cp$ is a subclass of $ \obj \cu$, $\cu'$ is an arbitrary triangulated category, and $\cp'\subset \obj \cu'$. 

\begin{enumerate}
\item\label{idbrown} We will say that  $\cu$  satisfies the  {\it Brown representability} property whenever any cp functor from $\cu$ into $\ab$ is representable. 

Dually, we will say that 
 $\cu'$ satisfies the  {\it dual Brown representability} property if $\cu'$ is smashing  and any  functor  from $\cu'$ into $\ab$ that respects products is corepresentable (i.e., if $\cu'{}\opp$ satisfies the   Brown representability assumption).

\item\label{dsmall}
For an infinite cardinal $\al$ an object $M$ of $\cu$ is said to be {\it $\al$-small} (in $\cu$) if the functor $H^M=\cu(M,-):\cu\to \ab$ 
 is $\al$-small (see Definition \ref{dsmash}(\ref{idal})).

Moreover, $\alz$-small objects of $\cu$ (corresponding to  functors that respect coproducts) will also said to be {\it compact}.

\item\label{dlocal}
We will say  that a full strict triangulated subcategory $\du\subset \cu$ is  {\it localizing}  whenever it is closed with respect to $\cu$-coproducts. 
Respectively, we will call the smallest localizing  subcategory of $\cu$ that contains a given class $\cp\subset \obj \cu$  the {\it  localizing subcategory of $\cu$ generated by $\cp$}.

\item\label{dcgen}
We will say that $\cp$ {\it compactly generates} $\cu$ and that $\cu$ is compactly generated if $\cp$ generates $\cu$ as its own localizing subcategory  and $\cp$ is a {\bf set} of compact objects of $\cu$.

Moreover, we will say that a subcategory $C$ of $\cu$ compactly generates $\cu$ whenever $C$ is essentially small and (any) its small skeleton compactly generates $\cu$.

\item\label{dgenw} We will say that  $\cp'$ {\it generates} a weight structure $w$ on  $\cu'$ whenever $\cu'_{w\ge 0}=(\cup_{i>0}\cp'[-i])\perpp$.

\item\label{dgent} We will say that 
 $\cp'$ {\it generates} a $t$-structure $t$ on $\cu'$ whenever $\cu'_{t\le 0}=(\cup_{i>0}\cp'[i])\perpp$.
\end{enumerate}
\end{defi}

\begin{rema}\label{rcomp}
1. Recall that $\cu$ satisfies both the Brown representability property and its dual whenever it is  compactly generated; 
 see Proposition 8.4.1, Proposition 8.4.2, Theorem 8.6.1, and  Remark 6.4.5  of \cite{neebook}.  Moreover, the Brown representability property is fulfilled whenever $\cu$ is just {\it $\alo$-perfectly generated} (see Definition 8.1.4 and Theorem 8.3.3 of ibid.). 

Recall also that any  triangulated category possessing a combinatorial (Quillen) model satisfies the dual Brown representability property; see \S0 of \cite{neefrosi} (the statement is given by the combination of Theorems 0.17 and 0.14 of ibid.).

Furthermore, the abundance of smashing triangulated categories that satisfy the Brown representability property has motivated the author not to consider orthogonal $t$-structures on $\cu'\neq \cu$ in Theorem \ref{tsmash}(I) below (in contrast to Theorem \ref{tsatur}(II); note however that one can easily formulate and prove the corresponding modification of Theorem \ref{tsmash}(I)).

2. The easy Lemma 4.1.4 of \cite{neebook} says that for any infinite cardinal $\al$ the class of $\al$-small objects gives a (full strict) triangulated subcategory $\cu^{(\al)}$ of $\cu$. Moreover, if $\al$ is  regular 
 (see Remark \ref{rsmash}(1)) then $\cu^{(\al)}$ is closed with respect to $\cu$-coproducts of less than $\al$ objects; see Lemma 4.1.5 of ibid. 

On the other hand, an object $M$ of $\cu$ is $\al$-small if and only if any morphism from $M$ into $\coprod_{i\in I} N_i$ factors through $\coprod_{j\in J} N_j$ for some $J\subset I$ of cardinality less than $\al$. Now, take $M=\coprod_{i\in I} M_i$
where all $M_i$ are non-zero and $I$ is of cardinality $\al$. Then $\id_M$ does not possess a  factorization through any $\coprod_{j\in J} M_j$ for $\# J<\al$; hence $M$ is not $\al$-small. These observations demonstrate that the filtration of $\cu$ by   $\cu^{(\al)}$ is "often non-trivial". Note moreover that any object of $\cu$ is $\al$-small for some cardinal $\al$ if $\cu$ is {\it well-generated}, whereas this is the case whenever $\cu$ possesses a combinatorial  model (by Proposition 6.10 of \cite{rosibr}; cf. part 1 of this remark).

3.  A class $\cp'$ as above is easily seen to determine weight and $t$-structures it generates on $\cu'$ (if any) completely; see either of Proposition 2.4(1) (along with \S3) of  \cite{bvt} or  Remark \ref{rtst}(3)  and Proposition \ref{pbw}(\ref{iort}) above.
\end{rema}

Now we prove our first "practical" existence of $t$-structures results; see Definitions \ref{dsmash} and \ref{dcomp} for the 
 notions mentioned in our theorem.

\begin{theo}\label{tsmash}
Let $w$ be a weight structure on $\cu$.

I. Assume 
 that $\cu$ satisfies the Brown representability property (and so, $\cu$ is smashing). 

Then there exists a $t$-structure $t^r$ right adjacent to $w$ if  and only if $w$ is smashing. Moreover, $t^r$ is cosmashing (if exists; see Definition \ref{dsmash}(\ref{ismt})) and its heart is equivalent to the category of those  additive functors $\hw\opp\to \ab$ that respect products.

II. Assume  that $w$ is cosmashing and  $\cu$ satisfies the dual Brown representability property.  

1. Then there exists a smashing $t$-structure $t^l$ left adjacent to $w$ and $\hrt^l$ is equivalent to the category of those additive functors $\hw\to \ab$ that respect products.

2. Assume that $w$ is also smashing. Then for any infinite cardinal $\al$ the weight structure $t^l$ given by the previous assertion restricts to the subcategory $\cu^{(\al)}$ of $\cu$ (see Remark \ref{rcomp}(2) and Definition \ref{dtstro}(5));  
 this restricted $t$-structure 
 $t^{(\al)}$  is the only $t$-structure on $\cu^{(\al)}$ that is left orthogonal to $w$.  
\end{theo}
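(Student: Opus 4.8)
The plan is to derive parts I and II.1 from the abstract criterion Theorem \ref{trefl} together with the (co)smashing stability results of \S\ref{smash}, and then to obtain the restriction statement of part II.2 directly from virtual $t$-truncations.

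\emph{Part I.} I would apply Theorem \ref{trefl} with $\cu=\cu'=\du$; here $\cu$ reflects itself by Proposition \ref{prefl}(\ref{irefl1}), and its condition (i) is exactly the existence of a right adjacent $t^r$. First suppose $w$ is smashing. For any $M\in\obj\cu$ the representable functor $H_M=\cu(-,M)\colon\cu\opp\to\ab$ is a cp functor (a contravariant representable functor converts coproducts into products), so $\tau_{\ge 0}H_M$ is cp as well by Proposition \ref{psmash}(\ref{icopr6p}) (with $\au=\ab$, which is AB4*). The Brown representability property then makes $\tau_{\ge 0}H_M$ representable, which is condition (ii) of Theorem \ref{trefl}; hence $t^r$ exists. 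Conversely, if $t^r$ is right adjacent to $w$ then $\cu_{w\ge 0}=\cu_{t^r\ge 0}=\perpp\cu_{t^r\le -1}$ by Proposition \ref{portadj} and Remark \ref{rtst}(3); a class of the form $\perpp D$ is automatically coproduct-closed (morphisms out of a coproduct form the product of the individual Hom-groups), so $\cu_{w\ge 0}$ is coproduct-closed and $w$ is smashing. Dually $\cu$ has products (Brown representability) and $\cu_{t^r\ge 0}=\cu_{w\ge 0}=(\cu_{w\le -1})^{\perp}$ (Proposition \ref{pbw}(\ref{iort})) is product-closed, so $t^r$ is cosmashing. For the heart, Proposition \ref{prefl}(\ref{irefl4}) makes the Yoneda functor $\hrt^r\to\adfu(\hw\opp,\ab)$ fully faithful; under the identification of weight-range-$[0,0]$ functors with $\adfu(\hw\opp,\ab)$ (Proposition \ref{pwrange}(\ref{iwrpure})) its essential image is precisely the weight-range-$[0,0]$ functors that are cp (equivalently, by Brown, representable — see Proposition \ref{pwrange}(\ref{iwrort},\ref{iwfil3})), i.e. the product-respecting functors $\hw\opp\to\ab$.

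\emph{Part II.1} is the formal dual of part I, obtained by passing to $\cu\opp$ with its opposite weight structure (Proposition \ref{pbw}(\ref{idual})). For \emph{part II.2}, fix an infinite cardinal $\al$ and $M\in\cu^{(\al)}$. By definition the corepresentable functor $H^M=\cu(M,-)$ is $\al$-small (Definition \ref{dcomp}(\ref{dsmall})), whence its virtual $t$-truncation $\tau_{\ge 0}H^M$ is $\al$-small by Proposition \ref{psmash}(\ref{icopr6n}) (here $w$ is smashing and $\au=\ab$ is AB5). Since $t^l$ is left orthogonal to $w$, Proposition \ref{pwfil}(5) identifies $\tau_{\ge 0}H^M$ with the functor corepresented by $t^l_{\ge 0}M$; thus $\cu(t^l_{\ge 0}M,-)$ is $\al$-small, i.e. $t^l_{\ge 0}M\in\cu^{(\al)}$. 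As $\cu^{(\al)}$ is a triangulated subcategory (Remark \ref{rcomp}(2)), the cone $t^l_{\le -1}M$ of $t^l_{\ge 0}M\to M$ lies in $\cu^{(\al)}$ too, so the $t^l$-decomposition of $M$ lives in $\cu^{(\al)}$ and $t^l$ restricts to a $t$-structure $t^{(\al)}$ there.

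This $t^{(\al)}$ is left orthogonal to $w$ (the orthogonality of $t^l$ passes to the smaller aisles), and for uniqueness I would argue as follows. Since $\cu^{(\al)}\subset\cu$, the covariant Yoneda functor $N\mapsto\cu(N,-)$ is fully faithful on $\cu^{(\al)}$ — the dual of Proposition \ref{prefl}(\ref{irefl1}) — so $\cu$ \emph{coreflects} $\cu^{(\al)}$. The dual of Proposition \ref{prefl}(\ref{irefl4}) then forces every $t$-structure on $\cu^{(\al)}$ left orthogonal to $w$ to be \emph{strictly} so, with aisles $\cu^{(\al)}_{t\ge 1}=\perpp\cu_{w\le 0}\cap\obj\cu^{(\al)}$ and $\cu^{(\al)}_{t\le -1}=\perpp\cu_{w\ge 0}\cap\obj\cu^{(\al)}$ depending on $w$ alone; uniqueness follows by Remark \ref{rtst}(3).

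The main obstacle is the restriction step of II.2: one must convert the $\al$-smallness of the \emph{functor} $\tau_{\ge 0}H^M$ into the $\al$-smallness of the \emph{object} $t^l_{\ge 0}M$. This works only because Proposition \ref{pwfil}(5) pins down $t^l_{\ge 0}M$ as the corepresenting object of $\tau_{\ge 0}H^M$, and because Proposition \ref{psmash}(\ref{icopr6n}) preserves $\al$-smallness under virtual $t$-truncation — the latter resting on coproducts of weight decompositions being weight decompositions (so that the smashing hypothesis on $w$ is genuinely used) and on the exactness of filtered colimits in $\ab$. The uniqueness is, by contrast, formal once the dual of the reflection machinery of Proposition \ref{prefl} is in place.
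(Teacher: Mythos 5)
Your proposal follows the paper's own route almost step for step: existence in part I via Theorem \ref{trefl} (with $\du=\cu'=\cu$, reflection being Proposition \ref{prefl}(\ref{irefl1})) combined with Proposition \ref{psmash}(\ref{icopr6p}) and Brown representability; strictness and cosmashing of $t^r$ via Proposition \ref{prefl}(\ref{irefl4}) and Proposition \ref{pbw}(\ref{iort}); part II.1 by categorical duality; and part II.2 by combining Proposition \ref{pwfil}(5) with Proposition \ref{psmash}(\ref{icopr6n}) for the restriction and the dual of Proposition \ref{prefl} for uniqueness. Your direct argument for the ``only if'' half of part I (a class of the form $\perpp D$ is automatically coproduct-closed) is a perfectly good replacement for the paper's citation of \cite{bvt}, which the paper itself describes as very easy.

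The one genuine gap is in your computation of the heart in part I. You assert that, under the equivalence of Proposition \ref{pwrange}(\ref{iwrpure}) between weight-range-$[0,0]$ functors and $\adfu(\hw\opp,\ab)$, the cp functors correspond \emph{exactly} to the product-respecting functors $\hw\opp\to\ab$. The easy direction is fine: a cp pure functor restricts to a product-respecting functor on $\hw$ (using that $\cu_{w=0}$ is coproduct-closed, Proposition \ref{psmash}(\ref{icoprh})). But the direction you actually need for essential surjectivity --- that the pure extension of a product-respecting $G:\hw\opp\to\ab$ is again cp, hence representable by an object which then lies in $\cu_{t^r=0}$ --- does not follow from Proposition \ref{pwrange}(\ref{iwrpure}), which is a black-box equivalence saying nothing about which pure functors convert coproducts into products. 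Proving it requires the weight-complex description of pure functors (computing $H(M)$ from $G$ applied to the terms of a weight complex of $M$, together with the fact that coproducts of weight decompositions are weight decompositions for smashing $w$, Proposition \ref{psmash}(\ref{icopr2})). This is precisely the point where the paper invokes Proposition 2.3.2(8) and Remark 2.1.3(2) of \cite{bwcp}; your argument needs that external input (or a reconstruction of it) to close, and the propositions you cite do not supply it.
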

\begin{proof}
I. The "only if" assertion is essentially given by Proposition 2.4(6) of \cite{bvt} (cf. \S3. of ibid.; 
the statement is also very easy for itself).

Conversely,  assume that $w$ is smashing. According to Proposition \ref{prefl}(\ref{irefl1})  we can  apply Theorem \ref{trefl} (in the case $\du=\cu'=\cu$) to obtain 
that the existence of $t^r$ is equivalent to the representability of $\tau_{\le 0}H_{M'}$ for any representable functor $H_{M'}$. Next, Proposition \ref{psmash}(\ref{icopr6p}) says that   $\tau_{\le 0}H_{M'}$ is a cp functor since $H_{M'}$ is. Hence $\tau_{\le 0}H_{M'}$ is representable by the Brown representability assumption, and we obtain  that $t^r$ exists indeed. 

Next,  the category $\cu$  is cosmashing according to  Proposition 8.4.6 of  \cite{neebook} (since it satisfies the Brown representability property).  Moreover, $t^r=(C_1,C_2)$, where $C_1= \cu_{w\ge 1}^{\perp}$  and $C_2= \cu_{w\le -1}^{\perp}$ according to Proposition \ref{prefl}. 
 Hence the class $\cu_{t^r}\ge 0$  is closed with respect to $\cu$-coproducts; thus $t^r$ is cosmashing as well. 

Lastly, since $t^r=(C_1,C_2)$, the class $\cu_{t=0}$ equals $(\cu_{w\ge 1}\cup \cu_{w\le -1})^{\perp}$; hence $\hrt$ can be calculated using Proposition 2.3.2(8) of \cite{bwcp} (see also Remark 2.1.3(2) of ibid. and Proposition \ref{pwrange}(\ref{iwrpure}) above).  

II.1. This is just the categorical dual to assertion I.

2. The uniqueness of a $t$-structure on $\cu^{(\al)}$ that is left orthogonal to $w$ is given by (the dual to) Proposition \ref{prefl}. 
 Next, for any object $M$ of $\cu^{(\al)}$ 
the functor $H^M$ is $\al$-small by the definition of  $\cu^{(\al)}$. Now, for $M'=t^l_{\ge 0} M$ Proposition \ref{pwfil}(5) says that  $H^{M'}\cong \tau_{\ge 0}H^M$. Hence the   functor $H^{M'}$ is $\al$-small as well according to Proposition \ref{psmash}(\ref{icopr6n}), and we obtain that $M'$ is an object of $\cu^{(\al)}$. Thus $M$ has a $t^l$-decomposition whose components are objects  of $\cu^{(\al)}$; therefore $t^l$ restricts to $\cu^{(\al)}$ indeed.
\end{proof}

\begin{rema}\label{rsmashex}
1.  Now let us discuss examples to Theorem \ref{tsmash}(I). 

According to Theorem 5 of \cite{paucomp}, 
 any set $\cp$ of compact objects of $\cu$ generates a (unique) smashing weight structure (see Definition \ref{dcomp}(\ref{dgenw}) and Remark \ref{rcomp}(3)). Moreover,  Theorem 4.5(2) of \cite{postov} and Theorem 4.2.1(1,2) of \cite{bpgws} 
(we will mention these statements in the the proof of Corollary \ref{ckeller}(2) below) enable one to check  whether two weight structures obtained this way are distinct. Thus one may say that there are lots of smashing weight structures on $\cu$ whenever there are "plenty" of compact objects in it (see Theorem 4.15 of \cite{postov} for a certain justification of this claim for derived categories of commutative rings).  Thus part I of our theorem yields a rich collection of $t$-structures, and the author does not know of any other methods that give all of them (cf. Remark \ref{rneetgen}).

2.  Recall (from Proposition 3.4(4) of \cite{bvt})  that "shift-stable" weight structures are in one-to-one correspondence with exact embeddings $i:L\to \cu$ possessing  right adjoints.  Hence applying our theorem  in this case we obtain the following: if 
 $i$ possesses a right adjoint respecting coproducts and $\cu$ satisfies the Brown representability property then for the full triangulated subcategory $R$ of $\cu$ with $\obj R=L\perpp$ the embedding $R\to \cu$ possesses a right adjoint as well. 
Thus $R$ is {\it admissible} in $\cu$ in the sense of  \cite{bondkaprserr} and the embedding $R\to\cu$ may be completed  to a {\it gluing datum} (cf. \cite[\S1.4]{bbd} or \cite[\S9.2]{neebook}).

So we re-prove Corollary 2.4 of \cite{nisao}. 

3. The author suspects that the heart of the restricted $t$-structure $t^{(\al)}$ 
 in part II.2 of our theorem can be computed similarly to the hearts in assertions I and II.1, and so using the theory of {$w$-pure functors} as developed in \S2 of \cite{bwcp}. 
\end{rema}

Let us now verify that 
Theorem 3.1 of \cite{kellerw}  (that essentially generalizes Theorem 3.2 of  \cite{konk}) gives an example for the setting of Theorem \ref{tsmash}(II.2), and study the corresponding structures in detail.

\begin{coro}\label{ckeller}
Let $\au$ be an essentially small abelian semi-simple subcategory of the subcategory $\cu^{(\alz)}$ of $\cu$  that generates $\cu$ as its own localizing subcategory,  and assume that $\obj \au\perp_{\cu}\cup_{i<0}\obj\au[i]$. 

1. Then there exist a smashing and cosmashing weight structure $w$ and a $t$-structure $t$ on $\cu$  that are generated by $\obj \au$, and $t$ is left adjacent to $w$. 

2. $t$ restricts to the subcategory $\cu^{(\al)}$ (see 
 Remark \ref{rcomp}(2)) for any infinite cardinal $\al$. Moreover,  in the corresponding couple $t^{(\alz)}=(\cu^{(\alz)}_{t^{(\alz)}\le 0}, \cu^{(\alz)}_{t^{(\alz)}\ge 0})$ the class $\cu^{(\alz)}_{t^{(\alz)}\le 0}$ (resp. $\cu^{(\alz)}_{t^{(\alz)}\ge 0}$) is the envelope of $\cup_{i\le 0}\obj \au[i]$ (resp. of $\cup_{i\ge 0}\obj \au[i]$) in $\cu$ (see \S\ref{snotata}).
\end{coro}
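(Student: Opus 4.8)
The plan is to verify the hypotheses of Theorem \ref{tsmash}(II) and then identify the structures concretely. First I would check that $\au$ being essentially small, semi-simple, and satisfying $\obj\au\perp_\cu\cup_{i<0}\obj\au[i]$ makes $\au$ (or rather its small skeleton) into a \emph{negative} subcategory of $\cu$ in the sense of Definition \ref{dwso}(\ref{id6}): the orthogonality hypothesis says exactly that $\obj\au\perp\cup_{i>0}\obj(\au[i])$ after reindexing, so this is immediate. Since $\au\subset\cu^{(\alz)}$ consists of compact objects and $\obj\au$ generates $\cu$ as a localizing subcategory, I would invoke the generation result cited in Remark \ref{rsmashex}(1) (Theorem 5 of \cite{paucomp}, together with Definition \ref{dcomp}(\ref{dgenw})) to produce a smashing weight structure $w$ generated by $\obj\au$, with $\cu_{w\ge 0}=(\cup_{i>0}\obj\au[-i])^\perp$. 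The heart $\hw$ should then be the envelope (equivalently, since $\au$ is semi-simple, the Karoubi-closure) of $\obj\au$.

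\textbf{Getting cosmashing and the adjacent $t$-structure.} The more delicate point for applying Theorem \ref{tsmash}(II.2) is that I need $w$ to be \emph{both} smashing \emph{and} cosmashing, and I need $\cu$ (or $\cu\opp$) to satisfy the dual Brown representability property. Here I would argue that since $\au$ compactly generates $\cu$, the category $\cu$ is compactly generated, so by Remark \ref{rcomp}(1) it satisfies both Brown representability and its dual. For cosmashing, I expect to use the dual of the generation statement: because $\au$ consists of compact objects in a compactly generated category and the weight structure is generated in the strong sense from a set, one can show $\cu_{w\le 0}$ is closed under products as well (this is where the compactness and the semi-simplicity, giving good control of the heart, should be exploited). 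Once $w$ is smashing and cosmashing and $\cu\opp$ satisfies Brown representability, Theorem \ref{tsmash}(II.1) yields a smashing $t$-structure $t$ left adjacent to $w$, and part (II.2) gives the restriction of $t$ to each $\cu^{(\al)}$, proving assertion~2's first sentence. I would identify $t$ as the one generated by $\obj\au$ in the sense of Definition \ref{dcomp}(\ref{dgent}) by comparing $\cu_{t\le 0}$ with $(\cup_{i>0}\obj\au[i])^\perp$ and using Proposition \ref{portadj} together with the strict orthogonality from Proposition \ref{prefl}(\ref{irefl4}).

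\textbf{Computing the compact heart explicitly.} For the second sentence of assertion~2 I would restrict attention to $\cu^{(\alz)}=\cu^{(\alz)}$, the subcategory of compact objects. The claim that $\cu^{(\alz)}_{t^{(\alz)}\le 0}$ is the envelope of $\cup_{i\le 0}\obj\au[i]$ and $\cu^{(\alz)}_{t^{(\alz)}\ge 0}$ is the envelope of $\cup_{i\ge 0}\obj\au[i]$ should follow from the explicit description of bounded $t$-structures generated by a negative \emph{and} positive (i.e.\ orthogonal in both directions, hence ``Ext-orthogonal'') semi-simple family: semi-simplicity of $\au$ forces all the relevant extension groups between the graded pieces $\au[i]$ to vanish except in the obvious degrees, so the aisles are built purely by extensions and retracts from shifts of $\au$ in the prescribed range. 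Concretely I would show each envelope is contained in the corresponding $t$-aisle (clear from the defining orthogonality, since $\obj\au[i]$ for $i\le 0$ lies in $\cu_{t\le 0}$ and these classes are extension- and retract-closed), and conversely that any compact object admits a finite filtration with graded pieces in $\au[i]$ — this is where a weight/homology complex argument as in Remark 2.3.5(2) of \cite{bwcp}, analogous to Proposition \ref{psingen}(1), enters.

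\textbf{Main obstacle.} The hard part will be establishing that $w$ is genuinely \emph{cosmashing} (closure of $\cu_{w\le 0}$ under products), since the generation result produces smashing structures naturally but the dual closure property is not automatic; I expect to need the full strength of compact generation plus semi-simplicity of $\au$ to run a dualized version of the $\paucomp$ argument, and to then verify that the resulting $t$-structure on the compact subcategory is exactly the ``tautological'' one whose aisles are the envelopes described. Matching the abstract $t$-structure from Theorem \ref{tsmash} with the concrete envelope description is the step most likely to require care, as it amounts to showing no extra objects sneak into the restricted aisles beyond those manifestly generated by $\au$.
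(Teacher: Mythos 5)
There is a genuine gap in your treatment of part 1, and it is precisely at the point you flag as the ``main obstacle.'' The paper's proof does not run a ``dualized Pauksztello argument'' at all: it invokes Theorem 3.1 of \cite{kellerw} (Keller--Nicolas), which is applicable exactly because $\au$ is semi-simple (so that $\adfu(\au,\ab)$ is semi-simple), and which produces a weight structure $w$ with \emph{both} aisles described as right orthogonals: $\cu_{w\ge 0}=(\cup_{i>0}\cp[-i])\perpp$ \emph{and} $\cu_{w\le 0}=(\cup_{i>0}\cp[i])\perpp$ for $\cp=\obj\au$. Since $\cp$ consists of compact objects, any class of the form $(\cup \cp[j])\perpp$ is closed under both products and coproducts, so $w$ is smashing and cosmashing for free; moreover the equality $\cu_{t\le 0}=\cu_{w\le 0}=(\cup_{i>0}\cp[i])\perpp$ is what identifies $t$ as generated by $\obj\au$ in the sense of Definition \ref{dcomp}(\ref{dgent}). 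Your route via Theorem 5 of \cite{paucomp} only yields the description $\cu_{w\ge 0}=(\cup_{i>0}\cp[-i])\perpp$; the co-aisle is then merely ${}^{\perp}\cu_{w\ge 1}$, and such a left orthogonal has no reason to be closed under products. Indeed cosmashing genuinely fails without semi-simplicity: for $\cu=D(\z)$ and $\cp=\{\z\}$ the generated weight structure has $\cu_{w\le 0}$ built from complexes of projectives, and since infinite products of free abelian groups need not be projective, this $w$ is not cosmashing. So the semi-simplicity of $\au$ cannot enter as an afterthought ``giving good control of the heart''; it must be used at the level of the construction of $w$ itself, and supplying that argument amounts to reproving the Keller--Nicolas theorem rather than dualizing Pauksztello. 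The same missing description of $\cu_{w\le 0}$ also blocks your proposed identification of $t$ as generated by $\obj\au$: strict orthogonality gives $\cu_{t\le 0}=\cu_{w\le 0}$, but without Keller--Nicolas you have no formula equating $\cu_{w\le 0}$ with $(\cup_{i>0}\obj\au[i])\perpp$.

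A secondary, smaller point: for the envelope computation in part 2, the paper does not redo a filtration/weight-complex argument from scratch; it quotes Theorem 4.2.1(2) of \cite{bpgws} (applied to $w$ and to $t$), which states exactly that the compact part of an aisle generated by compact objects is the envelope of the corresponding shifts of the generators. Your sketch of ``any compact object in the aisle admits a finite filtration with graded pieces in $\au[i]$'' is the content of that cited theorem, but as written it is an assertion, not an argument; once part 1 is repaired via \cite{kellerw}, the clean way to finish is the citation the paper uses. The parts of your proposal that do align with the paper — compact generation giving dual Brown representability via Remark \ref{rcomp}(1), and Theorem \ref{tsmash}(II.1--2) producing $t$ and its restrictions to each $\cu^{(\al)}$ — are correct.
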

\begin{proof}
1.  Since $\au$ is semi-simple, the category $\adfu(\au,\ab)$ is semi-simple as well. Thus we can apply Theorem 3.1 of \cite{kellerw} to obtain the existence of a weight structure $w$ on $\cu$  such that $\cu_{w\ge 0}=(\cup_{i>0}\cp[-i])\perpp$ (i.e. $w$ is generated by $\cp=\obj \au$) and  $\cu_{w\le 0}=(\cup_{i>0}\cp[i])\perpp$.
 Since the category $\cu$ is compactly generated by $\au$, it satisfies the dual Brown representability property (by the aforementioned Theorem 8.6.1 and  Remark 6.4.5  of \cite{neebook}).  Next, $w$ is obviously smashing and cosmashing. Applying Theorem \ref{tsmash}(II.1), we obtain the existence of a smashing $t$-structure $t$ that is left adjacent to $w$. Since $\cu_{w\le 0}=\cu_{t\le 0}$, we obtain that $t$ is generated by $\cp$ (as a $t$-structure) as well.    

2. $t$ restricts to the subcategory $\cu^{(\al)}$ for any infinite cardinal $\al$ according to part II.2 of Theorem \ref{tsmash}.
Thus it remains to prove that the classes $\cu_{t\le 0}\cap\obj \cu^{(\alz)}=\cu_{w\le 0}\cap\obj \cu^{(\alz)}$ and 
  $\cu_{t\ge 0}\cap\obj \cu^{(\alz)}$ are the envelopes in question. The latter statement an easy consequence of Theorem 4.2.1(2) of \cite{bpgws} applied to $w$ and $t$ respectively (see Remark 4.2.2(1,2) of ibid.; note also that Theorem 4.5(1,2) of \cite{postov} gives this statement in the case under the assumption that $\cu$ is a "stable derivator" triangulated category).  \end{proof}

\begin{rema}\label{rkeller}
\begin{enumerate}
\item\label{ikel1}
Thus we obtain a serious generalization of 
 the existence of a (certain) $t$-structure on $\cu^{(\alz)}$ 
 part of   \cite[Theorem 7.1]{kellerw}.

Note also that the assumption that $\cu$ is compactly generated by $\au$ does not appear to be necessary in Theorem 3.1 of ibid.; thus is may be omitted in our corollary as well. However, in this case it is probably more interesting to look at the  localizing
subcategory $\cu'$ of $\cu$ generated by $\obj \au$ instead. We  note that  the embedding $i:\cu'\to \cu$ possesses an exact right adjoint $F$ (since $\cu'$ satisfies the Brown representability property; see Theorem 8.4.4 and  Lemma 5.3.6  of ibid.), and $F$ allows to recover $w$ and $t$ from their restrictions to $\cu'$ (whose existence is given by the present form of Corollary \ref{ckeller}) via Propositions 3.2(5) and 3.4(3) of \cite{bvt}. 

\item\label{ikel2} Now we try to study the question which 
 $t$-structures on $\cu^{(\alz)}$ extend to examples for our corollary.

So, assume that $\cu$ is an arbitrary triangulated category 
 and $t'$ is a $t$-structure on $\cu^{(\alz)}$, and take $\au'=\hrt'$. 
Then  we have $\obj \au' \perp(\cup_{i>0}\obj \au'[-i])$ by the orthogonality axiom for $t'$.

Thus any 
essentially small abelian subcategory $\au$  of $\hrt'$ whose objects are semi-simple satisfies all the assumptions of our corollary except the one that $\au$  compactly generates $\cu$. Hence we can apply our corollary to the  localizing subcategory $\cu'$ of $\cu$ generated by $\obj \au$. 

\item\label{ikel3} Now assume in addition that $\cu$ is compactly generated, $t'$ is bounded, and $\hrt'$ is a length category (cf. Theorem 7.1 of \cite{kellerw}). Then the category $\cu^{(\alz)}$ is essentially small according to Lemma 4.4.5 of \cite{neebook}; 
hence $\hrt'$ is essentially small as well, and we can take $\au$ to be its subcategory of semi-simple objects. 

Since $t'$ is bounded and $\hrt'$ is a length category,  
 the category $\cu^{(\alz)}$ is densely generated by $\obj \au$; hence  $\au$ is easily seen to generate  $\cu$ as its own localizing subcategory. Thus one can apply Corollary \ref{ckeller} to this setting.
Moreover, it is easily seen that our assumptions on $t'$ (combined with part 2 of our corollary) imply that the corresponding $t$-structure $t^{\alz}$ coincides with $t'$.

\item\label{ikel4} It would be interesting to find which assumptions on a general $t$-structure $t'$ on $\cu^{(\alz)}$ ensure that $t'$ "extends" to a $t$-structure on $\cu$ and a weight structure $w$ that is right adjacent to $t$.

Suppose that $\cu$ is compactly generated (or at least that $\cu^{(\alz)}$ is essentially small). Then it appears to be quite reasonable to consider the weight structure $w$ that is generated (in $\cu$) by $\cu^{(\alz)}_{t'\ge 0}$ (see Definition \ref{dcomp}(\ref{dgenw}) and Remark \ref{rsmashex}(1)) and the $t$-structure $t$ that is generated by $\cu^{(\alz)}_{t'\le 0}$ (see Definition \ref{dcomp}(\ref{dgent}); the existence of $t'$ is provided by Theorem A.1 of \cite{talosa}). However, it is not clear how to check whether $t$ is left adjacent to $w$ in the general case.

\item\label{ikel5} Let us now describe an example that demonstrates that certain additional assumptions are necessary to ensure that this $t$ is left adjacent to $w$. Possibly, for this purpose it suffices to impose certain conditions on $\hrt$ (for $t$ as above), but it would certainly be more interesting to seek for formulations in terms of $\hrt'$. 

So, we take $R$ to be a left semi-hereditary ring (see Definition \ref{dfp}(\ref{idfp3}) below or \S0.3 of \cite{cohn}) that is not noetherian; in particular, one can take $R$ to be any non-noetherian valuation ring (see Proposition \ref{pfp}(\ref{ipfp1})). We will consider left $R$-modules only, and set $\cu=D(R)$, i.e., $\cu$ is the derived category of (left) $R$-modules. Then $\cu^{(\alz)}$ 
is the subcategory of {\it perfect complexes} in $D(R)$ (a well-known fact; see Proposition \ref{pfp}(\ref{ipfp6}) below), i.e., its objects are quasi-isomorphic to bounded complexes of finitely generated projective $R$-modules. 

Now we claim that the canonical $t$-structure $t$ on $\cu$ (i.e., $t$-truncations are the canonical truncations of complexes, and $t$-homology is the "$R$-module" one) restricts to $\cu^{(\alz)}$. 
Indeed,  objects of $\cu^{(\alz)}$ are $t$-bounded; hence it suffices to verify that the cohomologies of perfect complexes are  
 perfect themselves. Now, these cohomology modules are finitely presented (see Definition \ref{dfp}(\ref{idfp1}) and Proposition \ref{pfp}(\ref{ipfp2},\ref{ipfp3}) below; cf. also Theorem A.9 of \cite{cohn}), whereas for any finitely presented $R$-module $M$ one can take a surjection $R^n\to M$, and its kernel is projective and finitely presented according to Proposition \ref{pfp}(\ref{ipfp5}).

We will write $t'$ for the restriction of $t$ to $\cu^{(\alz)}$.
It remains to verify for $\cp=\obj \hrt'$ that  $\cu_{w\ge 0}=(\cup_{i>0}\cp[-i])\perpp$) and  $\cu_{w\le 0}=(\cup_{i>0}\cp[i])\perpp$ is not a weight structure. Assume the opposite; then $\hw$ obviously contains injective $R$-modules (placed in degree $0$) and is closed with respect to $\cu$-coproducts. Since $R$ is not noetherian, the Bass-Papp Theorem 1.1 of \cite{bass} implies that $\hw$
also contains a non-injective $R$-module $M$. Then we take an embedding $M\to I$ of $M$ into an injective module, and consider the corresponding distinguished triangle $M\to I \to N\to M[1]$. Since $N$ is an $R$-module (put in degree $0$) as well, we obtain $N\in \cu_{w\ge 0}$. Hence $N\perp M[1]$ and we obtain that $M$ is a retract of $I$. Thus  is an injective $R$-module, and we obtain a contradiction.

More generally, it 
is sufficient to assume that $R$ is any {\it coherent} ring such that any finitely presented module over it possesses a bounded projective resolution; see Proposition \ref{pfp}(\ref{ipfp4}) below. The author suspects that rings of the form $R'[x_1,x_2,\dots,x_2]$, where $R'$ is commutative semi-hereditary, give examples of these assumptions.

\end{enumerate}
\end{rema}


\subsection{Weight structures extended from subcategories of compact objects, and orthogonal $t$-structures}\label{sort}

Now we prove that weight structures extend from subcategories of compact objects to the localizing subcategories they generate. 
The (proof of the) first part of the following theorem is quite similar to the corresponding arguments in \S2 of \cite{bsnew}.
Note also that one can certainly assume that $\eu$ is compactly generated and thus equals $\du$.

\begin{theo}\label{tcompws}
Let $\eu$ be a smashing triangulated category, $\cu=\eu^{(\alz)}$, $\du$ is the localizing subcategory of $\eu$ generated by $\obj \cu$, and $w$ is a weight structure on $\cu$.

I.1. Then $w$ extends uniquely to a smashing weight structure $w_{\du}$ on $\du$, i.e., $\du_{w_{\du}\le 0}\cap \obj \cu=\cu_{w\le 0}$ and $\du_{w_{\du}\ge 0}\cap \obj \cu=\cu_{w\ge 0}$.

2. $\du_{w_{\du}\le 0}$ (resp.  $\du_{w_{\du}\ge 0}$) is the smallest extension-closed subclass of $\obj \du$ that is closed with respect to $\du$-coproducts and contains $\cu_{w\le 0}$ (resp.  $\cu_{w\ge 0}$).

3.  $\du_{w= 0}$ consists of all retract of all (small) $\du$-coproducts of elements of $\cu_{w= 0}$.

II. Assume in addition that the category $\cu$ is essentially small. 

1. Then there exists a $t$-structure $t_{\du}$ right adjacent to $w_{\du}$ (on $\du$); it is right orthogonal to $w$.

2. 
 $t_{\du}$ is strictly right orthogonal to $w$; hence $t_{\du}$ is both smashing and cosmashing.

3. 
$\hrt_{\du}$ is equivalent (in the obvious way) to the category $\adfu(\hw\opp,\ab)$.

4. For an infinite cardinal $\be$ consider the full subcategory $\du_\be$ of $\du$ 
that consists of those $N$ such that $\#\du(M,N)<\be$ for all $M\in \obj \cu$.

Then this subcategory is triangulated, $t_{\du}$ restricts to it, and the heart of this restriction $t_\be$ is naturally equivalent to the category of those functors from $\hw\opp$ into $\ab$ whose values are of cardinality less than $\be$.
\end{theo}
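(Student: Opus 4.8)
The plan is to first establish the extension (Part I) by the method of \S2 of \cite{bsnew}, and then to feed the resulting smashing weight structure into the machinery of \S\ref{sortvtt} and \S\ref{smashort}. For Part I, I would \emph{define} $\du_{w_{\du}\le 0}$ (resp. $\du_{w_{\du}\ge 0}$) to be the smallest subclass of $\obj\du$ containing $\cu_{w\le 0}$ (resp. $\cu_{w\ge 0}$) that is extension-closed and closed with respect to $\du$-coproducts; since $\du$ is idempotent complete (being a localizing subcategory of the smashing category $\eu$), the Eilenberg swindle makes these classes automatically retraction-closed, so that Part~I.2 holds by construction. The two nontrivial weight axioms are orthogonality and the existence of weight decompositions. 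Orthogonality $\du_{w_{\du}\le 0}\perp\du_{w_{\du}\ge 0}[1]$ is where compactness is essential: for a fixed $X\in\cu_{w\le 0}$ the functor $\du(X,-)$ converts coproducts into coproducts (as $X$ is compact in $\du$), so $\{Y:\ X\perp Y\}$ is extension- and coproduct-closed and contains $\cu_{w\ge 1}$ by the orthogonality of $w$, whence it contains the whole envelope $\du_{w_{\du}\ge 1}$; then, fixing $Y\in\du_{w_{\du}\ge 1}$ and using that $\du(-,Y)$ is cohomological and sends coproducts to products, the class $\{X:\ X\perp Y\}$ is extension- and coproduct-closed and contains $\cu_{w\le 0}$, hence contains $\du_{w_{\du}\le 0}$. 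The existence of weight decompositions is the technical heart: the class of $N$ admitting a $w_{\du}$-decomposition contains $\obj\cu$ (use the $w$-decompositions, which lie in $\cu_{w\le 0}$ and $\cu_{w\ge 1}$), is closed under coproducts (coproducts of decompositions are decompositions) and under cones and shifts (a standard octahedral argument); being a localizing subcategory containing $\obj\cu$, it is all of $\du$. The restriction $\du_{w_{\du}\le 0}\cap\obj\cu=\cu_{w\le 0}$ follows from orthogonality: if $M\in\du_{w_{\du}\le 0}\cap\obj\cu$ then $M\perp\du_{w_{\du}\ge 1}\supseteq\cu_{w\ge 1}$, so $M\in{}^{\perp}\cu_{w\ge 1}=\cu_{w\le 0}$ by Proposition~\ref{pbw}(\ref{iort}), and dually for $\cu_{w\ge 0}$. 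Uniqueness is immediate from Proposition~\ref{pbw}(\ref{iuni}), and Part~I.3 follows from the description of the heart of a smashing weight structure generated by a negative family of compact objects (as in \cite{bpgws}) together with Proposition~\ref{psmash}(\ref{icoprh}).

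For Part~II.1, since $\cu$ is essentially small, $\du$ is compactly generated by a small skeleton of $\obj\cu$ and so satisfies the Brown representability property (Remark~\ref{rcomp}(1)); as $w_{\du}$ is smashing by Part~I, Theorem~\ref{tsmash}(I) yields a $t$-structure $t_{\du}$ right adjacent to $w_{\du}$, cosmashing, with heart the category of product-respecting functors on $\hw_{\du}$. Right orthogonality of $t_{\du}$ to $w$ (Definition~\ref{dort}, with $\cu\subset\du$) is immediate from $\du_{t_{\du}\ge 0}=\du_{w_{\du}\ge 0}\supseteq\cu_{w\ge 0}$ and $\du_{t_{\du}\le 0}=\du_{w_{\du}\le 0}\supseteq\cu_{w\le 0}$ together with the orthogonality axioms. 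For II.2, Proposition~\ref{prefl}(\ref{irefl1},\ref{irefl4}) applied with $\cu=\cu'=\du$ shows $t_{\du}$ is strictly right orthogonal to $w_{\du}$, i.e. $\du_{t_{\du}\ge 1}=\du_{w_{\du}\le 0}^{\perp_{\du}}$ and $\du_{t_{\du}\le -1}={}^{\perp_{\du}}\du_{w_{\du}\ge 0}$; to pass to $w$, I would note that $\du_{w_{\du}\le 0}$ is the coproduct- and extension-closure of $\cu_{w\le 0}$ and $\du(-,Y)$ is cohomological and product-converting, giving $\du_{w_{\du}\le 0}^{\perp_{\du}}=\cu_{w\le 0}^{\perp_{\du}}$, and dually (now using compactness of the objects of $\cu_{w\ge 0}$) ${}^{\perp_{\du}}\du_{w_{\du}\ge 0}={}^{\perp_{\du}}\cu_{w\ge 0}$. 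Both smashing and cosmashing follow, since $\du_{t_{\du}\le -1}={}^{\perp_{\du}}\du_{w_{\du}\ge 0}$ is a left-orthogonal (hence coproduct-closed) class and $\du_{t_{\du}\ge 0}=\cu_{w\le -1}^{\perp_{\du}}$ is a right-orthogonal (hence product-closed) class, $\du$ being both smashing and cosmashing.

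For II.3 I would compose the equivalence of Theorem~\ref{tsmash}(I), namely $\hrt_{\du}\simeq\{G\colon\hw_{\du}\opp\to\ab\text{ additive and product-respecting}\}$, with restriction along $\hw\hookrightarrow\hw_{\du}$. By Part~I.3 every object of $\hw_{\du}$ is a retract of a coproduct of objects of $\hw$, so a product-respecting functor on $\hw_{\du}$ is determined by, and freely reconstructed from, its restriction to $\hw$; hence this restriction is an equivalence onto $\adfu(\hw\opp,\ab)$, proving II.3. Finally, for II.4: $\du_\be$ is triangulated because for a distinguished triangle $N_1\to N_2\to N_3\to N_1[1]$ the exact sequences of the functors $\du(M,-)$ (with $M\in\obj\cu$, and $\obj\cu$ shift-stable) bound $\#\du(M,N_3)$ by a product of two cardinals $<\be$, which is again $<\be$ as $\be$ is infinite; that $t_{\du}$ restricts to $\du_\be$ follows from Proposition~\ref{pwfil}(4), since $t_{\du,\ge 0}N$ and $t_{\du,\le -1}N$ represent $\tau_{\ge 0}(H_N)$ and $\tau_{\le -1}(H_N)$, whose values at $M\in\obj\cu$ are subquotients of $\du(w_{\ge -1}M,N)$ resp. $\du(w_{\le -1}M,N)$ with the weight truncations chosen inside $\cu$ (possible since $w_{\du}$ restricts to $w$), hence of cardinality $<\be$. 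The heart computation then combines II.3 with the purity of $H_N=\du(-,N)|_{\cu}$ for $N\in\hrt_{\du}$ (weight range $[0,0]$ by Proposition~\ref{pwrange}(\ref{iwrort})): a two-step weight truncation shows $\du(M,N)$ is a quotient of $\du(M^0,N)$ for the weight-zero term $M^0=w_{\ge 0}w_{\le 0}M\in\hw$ of $M\in\obj\cu$ (using Proposition~\ref{pbw}(\ref{iwd0})), so $N\in\du_\be$ is equivalent to the corresponding functor $\hw\opp\to\ab$ having all values of cardinality $<\be$.

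I expect the main obstacle to be the existence of weight decompositions for $w_{\du}$ in Part~I, i.e. showing that the class of objects admitting a decomposition is a localizing subcategory, which requires the transfinite assembly of weight decompositions from the compact level as in \S2 of \cite{bsnew}; the remaining parts are comparatively formal, the only other delicate points being the cardinal bookkeeping and the purity argument in II.4.
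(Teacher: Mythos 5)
Your Part I follows the paper's own construction (extension/coproduct closures, compactness of $\cu_{w\le 0}$ for the orthogonality axiom, a closure argument for weight decompositions, uniqueness via Proposition \ref{pbw}(\ref{iuni})), and your Part II.4 is essentially the paper's cardinality argument; but two of your steps are genuinely broken. The first is Part I.3. You justify it by ``the description of the heart of a smashing weight structure generated by a negative family of compact objects''. That description applies to the weight structure $v$ that $\cu_{w=0}$ generates in the sense of Definition \ref{dcomp}(\ref{dgenw}), i.e.\ with $\du_{v\ge 0}=(\bigcup_{i>0}\cu_{w=0}[-i])^{\perp}$; and $v$ need not coincide with $w_{\du}$, since for unbounded or degenerate $w$ the heart does not determine the weight structure. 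Concretely, for the legitimate weight structure $w=(\obj\cu,\ns)$ one gets $w_{\du}=(\obj\du,\ns)$, while the structure generated by $\cu_{w=0}=\ns$ is $(\ns,\obj\du)$. So your citation computes the heart of a possibly different weight structure; together with Proposition \ref{psmash}(\ref{icoprh}) it yields only the easy inclusion. The hard inclusion --- every object of $\du_{w_{\du}=0}$ is a retract of a coproduct of objects of $\cu_{w=0}$ --- needs a direct argument: the paper shows, via Proposition \ref{pstar}(I, II.1) and Proposition \ref{pbw}(\ref{isplit}), that the class of extensions of elements of $\du_{w_{\du}\ge 1}$ by elements of the candidate class equals $\du_{w_{\du}\ge 0}$, and then applies Proposition \ref{pbw}(\ref{iwdmod}). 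Since your II.3 (and hence the heart computation in II.4) rests on I.3, this gap propagates.

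The second problem is in Parts II.1--II.2, which rest on the identity $\du_{t_{\du}\le 0}=\du_{w_{\du}\le 0}$ (asserted in II.1, and again in II.2 in the equivalent form $\du_{t_{\du}\le -1}={}^{\perp_{\du}}\du_{w_{\du}\ge 0}$; note that ${}^{\perp_{\du}}\du_{w_{\du}\ge 0}=\du_{w_{\du}\le -1}$ by Proposition \ref{pbw}(\ref{iort})). This identity is false: right adjacency identifies only the $\ge 0$ classes, and $\du_{t_{\du}\le 0}=(\du_{w_{\du}\ge 1})^{\perp_{\du}}$ is in general strictly larger than $\du_{w_{\du}\le 0}={}^{\perp_{\du}}\du_{w_{\du}\ge 1}$ (were they equal, the couple $w_{\du}$ would itself satisfy the $t$-structure axioms). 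In II.1 the error is harmless, since the condition $\cu_{w\ge 0}\perp\du_{t_{\du}\le -1}$ follows already from $\cu_{w\ge 0}\subset\du_{w_{\du}\ge 0}=\du_{t_{\du}\ge 0}$ together with the orthogonality axiom of $t_{\du}$ itself. In II.2 it is load-bearing, and the supporting claim ${}^{\perp_{\du}}\du_{w_{\du}\ge 0}={}^{\perp_{\du}}\cu_{w\ge 0}$ ``using compactness of the objects of $\cu_{w\ge 0}$'' is also false: for a non-compact $X$, the vanishing of $\du(X,Y_i)$ for all $i$ does not imply the vanishing of $\du(X,\coprod Y_i)$ (for instance $\operatorname{Ext}^1_{\z}(\q,\z/p)=0$ for every prime $p$ while $\operatorname{Ext}^1_{\z}(\q,\bigoplus_p\z/p)\neq 0$), so left orthogonals do not pass through the closures of I.2. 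The correct route --- the paper's --- is: Proposition \ref{prefl}(\ref{irefl4}) gives $\du_{t_{\du}\le 0}=(\du_{w_{\du}\ge 1})^{\perp_{\du}}$ and $\du_{t_{\du}\ge 0}=(\du_{w_{\du}\le -1})^{\perp_{\du}}$, with \emph{right} orthogonals on both sides; these equal $(\cu_{w\ge 1})^{\perp_{\du}}$ and $(\cu_{w\le -1})^{\perp_{\du}}$ for the purely formal reason that, for fixed $N$, the class $\{X:\ X\perp N\}$ is extension- and coproduct-closed (maps out of a coproduct form a product), hence contains the closures of I.2 as soon as it contains the generators; compactness enters only afterwards, where it actually applies: the elements of $\cu_{w\ge 1}$ are compact, so $(\cu_{w\ge 1})^{\perp_{\du}}=\du_{t_{\du}\le 0}$ is coproduct-closed, i.e.\ $t_{\du}$ is smashing. (You may have been misled by the placement of ${}^{\perp_{\du}}$ in Definition \ref{dort}(4), which appears to be a misprint; the paper's own usage in Proposition \ref{prefl} and in this proof consistently takes right orthogonals.)
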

\begin{proof}
I.1,2. We will write  $C_1$ and $C_2$ for the classes of objects described in assertion I.2. Let us prove that 
 $(C_1,C_2)$ is a weight structure on $\du$ indeed. 

Firstly, 
 axiom (ii) of Definition \ref{dwstr} (for $w$) easily implies that  $C_1\subset C_1[1]$ and $C_2[1]\subset C_2$.  
Combining this statement with  
Proposition \ref{pstar}(II.2) below we obtain that $C_1$ and $C_2$ are retraction-closed in $\du$. 

Next, the compactness of the elements of $\cu_{w\le 0}$ in $\du$ implies that the class $\cu_{w\le 0}\perpp$ is closed with respect to coproducts. Since it is also extension-closed and contains $\cu_{w\ge 1}$ by the axiom (iii) of Definition \ref{dwstr}, this orthogonal 
 contains $C_2[1]$, i.e., $\cu_{w\le 0}\perp C_2[1]$. Thus $\cu_{w\le 0}\subset \perpp C_2[1]$, and since the latter class is closed with respect to coproducts and extension, we obtain that $C_1\perp C_2[1]$ (cf. the proof of \cite[Lemma 1.1.1(2)]{bokum}).

Let us now prove the existence of weight decompositions, i.e., for the set $E$ of those $M\in \obj \du$ such that there exists a distinguished triangle $LM\to M\to RM\to LM[1]$ with $LM\in C_1$ and $RM\in C_2[1]$ we should prove $E=\obj \du$. Now, $E$ certainly contains $\obj \cu$, and 
 Proposition \ref{pstar}(I, II.1) below implies that it is also extension-closed and closed with respect to coproducts. Hence $E=\obj \du$, and we obtain that $w_{\du}=(C_1,C_2)$ is a weight structure on $\du$ indeed. Certainly, this weight structure is smashing.

Lastly, assume that $v$ is a smashing weight structure on $\du$ such that $ \cu_{w\le 0}\subset \du_{v\le 0}$ and $ \cu_{w\ge 0}\subset \du_{v\ge  0}$. Then we obviously have $ \du_{w_{\du}\le 0}\subset \du_{v\le 0}$ and $ \du_{w_{\du}\ge 0}\subset \du_{v\ge  0}$, and applying Proposition \ref{pbw}(\ref{iuni}) we obtain $v=w_{\du}$.

3. Denote  our candidate for $\du_{w_{\du}=0}$ by $C$. 
Firstly we note that  $C\subset \du_{w_{\du}=0}$ since the latter class is  closed with respect to coproducts according to Proposition \ref{psmash}(\ref{icoprh}). 

Applying Proposition \ref{pbw}(\ref{isplit})  we obtain that $C$  is  extension-closed (since this class is certainly additive); certainly, it is also closed with respect to coproducts. 

Next we apply 
  Proposition \ref{pstar}(I, II.1) once again to obtain that the class of extensions of elements of $\du_{w_{\du} \ge 1}$ by that of $C$ is extension-closed and closed with respect to coproducts; hence this class coincides with $\du_{w_{\du} \ge 0}$. Thus for any $M\in \du_{w_{\du}= 0}$ there exists its weight decomposition $LM\to M\to RM\to LM[1]$ with $LM\in C$.  Since $M$ is a retract of $LM$ according to Proposition \ref{pbw}(\ref{iwdmod}),  we obtain that $M\in C$.

II.1. The category $\du$ is compactly generated by $\cu$ in this case; hence $\du$ satisfies the Brown representability property (see Remark \ref{rcomp}(1)). Next, $w_{\du}$ is smashing; thus a $t$-structure $t_{\du}$ adjacent to it exists according to Theorem \ref{tsmash}(I). Certainly, $t_{\du}$ is also 
 orthogonal to $w$.

2. $t_{\du}$ is cosmashing according to Theorem \ref{tsmash}(I).

Next, Proposition \ref{prefl} implies that $t_{\du}$ is strictly right orthogonal to $w_{\du}$, i.e., $\du_{t_{\du}\le 0}=\du_{w_{\du}\ge 1}\perpp$ and $\du_{t_{\du}\ge 0}=\du_{w_{\du}\le -1}\perpp$. Since for any object $N$ of $\du$ the class ${}^{\perp_{\du}} N$ is closed with respect to coproducts and extensions, assertion I.2 implies that $\du_{w_{\du}\ge 1}\perpp=\cu_{w\ge 1}\perpp$ and $\du_{w_{\du}\le -1}\perpp =\cu_{w\le- 1}\perpp$. Lastly, the elements of $\cu_{w\ge 1}$ are compact in $\du$; hence $t_{\du}$ is smashing as well.

3. According to Theorem \ref{tsmash}(I), the category $\hrt_{\du}$ is equivalent to the category of those functors from $\hw_{\du}\opp$ into $\ab$ that respect products. Thus it remains to apply the description of $\hw_{\du}$ provided by assertion I.3.

4. The proof relies on the following obvious observation that will be denoted by (*): the category of abelian groups of cardinality less than $\be$ is a (full) exact abelian subcategory of $\ab$. It immediately implies that the category $\du_{\be}$ is triangulated.  
	
	To prove that $t$ restricts to $\du_{\be}$ we argue similarly to the proof of Theorem \ref{tsmash}(II.2). For an object 
	$N$ of $\du_{\be}$  the values of the  corresponding functor $H_N:\cu\opp\to \ab$ are of cardinality less than $\be$  by the definition of $\du_{\be}$, and 
	 combining (*) with Proposition \ref{pwfil}(7) we obtain that the functor   $\tau_{\ge 0}H_N$ possesses this property as well. Since 
 	for $N'=t_{{\du},\ge 0} N$ we have $\tau_{\ge 0}H_N\cong H_{N'}$, $N'$ is an object of $\du_{\be}$.  Thus $N$ has a $t_{\du}$ 
 -decomposition whose components are objects  of $\du_{\be}$, i.e.,  $t_{\du}$ restricts to $\du_{\be}$.

It remains to calculate the heart of the $t$-structure $t_{\be}$ obtained. Applying assertion II.3 we obtain that it suffices to verify the following: a $w$-pure functor $\cu\opp\to \ab$ has values of cardinality less than $\be$ if and only if the values of its restriction to $\hw$ satisfy this property. This is immediate from Lemma 2.1.4 of \cite{bwcp} along with (*). 
\end{proof}

\begin{rema}\label{restralcomp}
1. The restriction of our theorem to the case where $w$ is bounded and $\cu$ is essentially small was essentially established in \S4.5 of \cite{bws}; cf.  Theorem 3.2.2 of \cite{bwcp} and Remark 2.3.2(2) of \cite{bsnew} for some more detail.

2. Similarly to Corollary 2.3.1(2) of ibid., for any regular cardinal $\al$ the weight structure $w$ restricts to the smallest subcategory of $\du$ that contains $\cu$ and is closed with respect to coproducts of cardinality less than $\al$. Moreover, this filtration of $\du$ may be easily completed to a filtration indexed by all infinite cardinals, cf. loc. cit.

Moreover, part I of our theorem along with Remark \ref{rkeller}(\ref{ikel5}) demonstrate that it is "easier to extend weight structures from compact objects than $t$-ones". 

3. Note that one can easily obtain plenty of examples  for our theorem such that  $w$ is unbounded. 

Indeed, can obtain lots of unbounded weight structures on essentially small triangulated categories using (say) the previous part of this remark.  Next, it appears that any "reasonable" essentially small triangulated category $\cu$ is the subcategory of compact objects in some smashing triangulated category $\du$; cf. Remark 5.4.3(1) and Proposition 5.5.2 of \cite{bpgws} (that relies on \cite{tmodel}; one should dualize it and pass to a subcategory).

4.   Theorem 
B of \cite{krauwg} relates  the filtration of $\du$ by the subcategories $\du_{\be}$ to the so-called $\be$-compactness filtration (as introduced in \cite{neebook}).
\end{rema}

\section{On 
 $t$-structures related to saturated categories and coherent sheaves}\label{scoh}

 In \S\S\ref{satur}--\ref{sperf2} we will treat $R$-linear categories and functors. 
we will always assume that $R$ is an associative commutative unital {\it coherent} (see Definition \ref{dfp} below) ring; moreover, $R$ will be Noetherian in \S\S\ref{sperf1}--\ref{sperf2}.

We start \S\ref{satur} from recalling (from \cite{neesat} and preceding papers on the subject) the definition of $R$-saturated   categories and (locally) finite $R$-linear functors.
Next  we prove the existence of a $t$-structure adjacent to a bounded weight structure on a $R$-saturated category $\cu$; we also generalize this statement to the case of orthogonal structures.  

In \S\ref{sperf1} we describe  rich families of "geometric" examples to these statements; one takes $\cu$ and $\cu'$ to be the derived categories of perfect complexes and of bounded (above) complexes of coherent sheaves over $X$, where $X$ is proper over $\spe R$; 
 we also apply duality if $X$ is regular (or Gorenstein).

In \S\ref{sperf2} we discuss whether one can obtain interesting results 
 "starting from" $\cu=D^b_{coh}(X)$ (instead of $\cu=D^{perf}(X)$).

In \S\ref{scoher} we recall certain definitions and statements related to coherent rings and perfect complexes.

\subsection{Constructing $t$-structures corresponding to (locally) finite functors into $R$-modules}\label{satur}

To help the reader we recall that all Noetherian rings are coherent, and if $R$ is Noetherian then an $R$-module is finitely presented if and only if it is finitely generated. Moreover, the restriction of Proposition \ref{pfp} below to the case of Noetherian rings is very well-known. We mention coherent rings in this section just for the sake of generality. Since in the most interesting of the currently available examples for our statements the ring $R$ is Noetherian, the reader may restrict herself to the Noetherian ring case.

\begin{defi}\label{dsatur} 
Let  $\cu$ be  $R$-linear category.

1. We will say that an $R$-linear cohomological functor $H$ from $\cu$ into $R-\modd$ is  {\it (anti) locally finite } 
 whenever for any $M\in \obj\cu$ the $R$-module $H(M)$ is finitely  presented and $(H(M[-i])=) H^i(M)=\ns$  for  $i\ll 0$ (resp.  for  $i\gg 0$) 

Moreover, we will say that $H$ is finite if it is both locally and anti-locally finite.


2. We will say that $\cu$ is {\it $R$-saturated} if the  representable functors from $\cu$ into $R-\modd$  are exactly all the  finite ones.

3. The symbol $\adfur(C,D)$ will denote the (possibly, big) category of $R$-linear (additive) functors from $C$ into $D$ whenever $C$ and $D$ are $R$-linear categories.   

4. We will write $R-\mmodd$ for the category of finitely presented $R$-modules  (see Definition \ref{dfp}\ref{idfp1}) below).
\end{defi}

The following statement should be understood as the conjunction of three its versions. To obtain the "finite" version
 one should ignore all adjectives in brackets, to obtain  the "locally finite" version one should take the first adjectives in all the brackets  into account, and one should take the second adjectives to get the "anti-locally finite" version. This rule should also be applied to some of the sentences in the proof. 

\begin{theo}\label{tsatur}
Assume that $\cu$ and $\cu'$ are full $R$-linear triangulated subcategories of an $R$-linear triangulated category $\du$ (recall that $R$ is a coherent ring),  $\cu$ is endowed with a bounded (resp. bounded below, resp. bounded above) weight structure $w$, and a subcategory $\cu'$ of $\cu$ is characterized by the following condition:  for $N\in \obj\du $ the $\du$-Yoneda-functor $H_N:\cu\opp\to R-\modd$ (see Definition \ref{dvtt}(3))  
 is  finite   (resp. locally finite, resp. anti-locally finite) if and only if  $N$ is an object of $\cu'$. 

I. Then all virtual $t$-truncations of   finite  (resp. locally finite, resp. anti-locally finite) functors $\cu\opp\to R-\modd$ are  finite  (resp. locally finite, resp. anti-locally finite) as well.

II. Assume that 
 finite  (resp. locally finite, resp. anti-locally finite) functors $\cu\opp\to R-\modd$ are precisely the functors of the form $H_N$  for $N\in \obj\cu'$,  and that $\cu$  reflects  $\cu'$ 
 in $\du$ (see Definition \ref{drefl}). 

1. Then $\cu'$ is a triangulated subcategory of $\du$ and there exists a (unique) $t$-structure $t$ on $\cu'$ that is strictly right orthogonal to $w$.

2. 
 The obvious Yoneda-type functor from the category $\hrt$ 
into the subcategory $\adfur(\hw\opp,R-\mmodd)$ of $\adfur(\hw\opp,R-\modd)$ is an equivalence of categories.


III. Assume that $\du$ is smashing, $\cu$ equals $\du^{(\alz)}$ and compactly generates $\du$ (thus $\cu$ is essentially small). 

1. Then the $t$-structure $t_{\du}$ on $\du$ provided by Theorem \ref{tcompws}(II) restricts to $\cu'$ (i.e., 
$t=
 ((\cu_{w\ge 1})^{\perp_{\du}}\cap \obj \cu',  (\cu_{w\le -1})^{\perp_{\du}}\cap \obj \cu')$ is a $t$-structure  on $\cu'$).

2. The obvious Yoneda-type functor from $\hrt$ into 
the category $\adfur(\hw{}\opp,R-\mmodd)\subset \adfur(\hw\opp,R-\modd)$ is an equivalence of categories.

\end{theo}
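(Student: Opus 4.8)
The plan is to treat the three parts in order, using Part~I as the computational engine for Parts~II and~III, and throughout to read off the three ``finite / locally finite / anti-locally finite'' versions from the boundedness hypotheses on $w$. For Part~I, I would first dispose of the finite presentation of values: since $R$ is coherent, $R-\mmodd$ is an abelian subcategory of $R-\modd$ (closed under $\ke$ and $\cok$), so Proposition~\ref{pwfil}(7) applies verbatim and shows that every virtual $t$-truncation of a functor with finitely presented values again has finitely presented values. It then remains to transfer the vanishing conditions, which amount to $H(M[j])=\ns$ for $j\gg 0$ in the locally finite case and for $j\ll 0$ in the anti-locally finite case (and both in the finite case). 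Here the useful observation is asymmetric: by Proposition~\ref{pwrange}(\ref{iwrvt}) the functor $\tau_{\le n}(H)$ annihilates $\cu_{w\ge n+1}$, and when $w$ is bounded below any shift $M[j]$ lands in $\cu_{w\ge n+1}$ for $j\gg 0$, so $\tau_{\le n}(H)$ inherits ``vanishing for $j\gg 0$''; dually $\tau_{\ge n}(H)$ annihilates $\cu_{w\le n-1}$ and inherits ``vanishing for $j\ll 0$'' when $w$ is bounded above. To obtain the \emph{complementary} vanishing for each truncation (e.g. that $\tau_{\ge n}(H)$ also vanishes for $j\gg 0$ in the locally finite case) I would feed the long exact sequence (\ref{evtt}) relating $\tau_{\ge n}(H)$, $H$ and $\tau_{\le n-1}(H)$: two of these three vanish on the relevant shifts (one by the weight-range argument just given, one by the hypothesis on $H$), and exactness forces the third.

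For Part~II, I would first check that $\cu'$ is a triangulated subcategory of $\du$. Shift-invariance of the three conditions is clear (a shift only reindexes the vanishing and permutes finitely presented values), and for a distinguished triangle $N_1\to N_2\to N_3\to N_1[1]$ with $N_1,N_2\in\obj\cu'$ the long exact sequence obtained by applying $\du(-,N_\bullet)$ to objects of $\cu$, together with coherence of $R$ (so that finitely presented modules are closed under kernels, cokernels and extensions), shows $H_{N_3}$ is finite, whence $N_3\in\obj\cu'$. Next, since $\cu$ reflects $\cu'$, I would invoke Theorem~\ref{trefl}: its condition~(ii) asks that $\tau_{\ge 0}H_{M'}$ be $\du$-representable by an object of $\cu'$ for each $M'\in\obj\cu'$, and this holds because $\tau_{\ge 0}H_{M'}$ is finite by Part~I, hence of the form $H_N$ with $N\in\obj\cu'$ by the standing hypothesis that finite functors are exactly the $H_N$. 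Thus a right orthogonal $t$ exists; Proposition~\ref{prefl}(\ref{irefl4}) upgrades it to a \emph{strictly} right orthogonal one, and strictness pins its aisles down in terms of $w$ and $\cu'$, giving uniqueness. For the heart, Proposition~\ref{prefl}(\ref{irefl4}) already shows the Yoneda-type functor $\hrt\to\adfu(\hw\opp,\ab)$ is fully faithful with image consisting of weight-range-$[0,0]$ functors; for essential surjectivity onto $\adfur(\hw{}\opp,R-\mmodd)$ I would take an $R$-linear $F\colon\hw\opp\to R-\mmodd$, extend it through Proposition~\ref{pwrange}(\ref{iwrpure}) to a weight-range-$[0,0]$ functor $\tilde F\colon\cu\opp\to R-\modd$, and check $\tilde F$ is finite: its vanishing is automatic by Proposition~\ref{pwrange}(\ref{iwrpureb}) since $w$ is bounded, and by the pure-functor computation (Lemma~2.1.4 of \cite{bwcp}) its values are finitely presented exactly because those of $F$ are. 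Then $\tilde F\cong H_N$ with $N\in\cu'_{t=0}=\obj\hrt$.

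For Part~III, I would apply Theorem~\ref{tcompws}(II) with $\eu=\du$ to obtain the $t$-structure $t_{\du}$ on $\du$ that is (strictly) right orthogonal to $w$, smashing and cosmashing. To see it restricts to $\cu'$, take $M\in\obj\cu'$; by Proposition~\ref{pwfil}(4) the truncations $t_{\du,\ge 0}M$ and $t_{\du,\le -1}M$ $\du$-represent $\tau_{\ge 0}H_M$ and $\tau_{\le -1}H_M$ on $\cu$, and these are finite by Part~I, so both truncations lie in $\cu'$; hence the $t_{\du}$-decomposition of $M$ stays inside $\cu'$ and $t_{\du}$ restricts. The identification of the restricted aisles with $(\cu_{w\ge 1})^{\perp_{\du}}\cap\obj\cu'$ and $(\cu_{w\le -1})^{\perp_{\du}}\cap\obj\cu'$ is exactly the strict right orthogonality from Theorem~\ref{tcompws}(II.2) intersected with $\obj\cu'$. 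Finally, the heart of the restriction is $\obj\hrt_{\du}\cap\obj\cu'$; under the ($R$-linear form of the) equivalence $\hrt_{\du}\simeq\adfur(\hw\opp,R-\modd)$ of Theorem~\ref{tcompws}(II.3) and Proposition~\ref{pwrange}(\ref{iwrpure}) the objects of $\cu'$ correspond to the finite functors, and a $w$-pure functor is finite if and only if its restriction to $\hw$ takes finitely presented values (the argument of the proof of Theorem~\ref{tcompws}(II.4), with ``finitely presented'' in place of ``of cardinality $<\be$''), so the heart is $\adfur(\hw{}\opp,R-\mmodd)$.

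I expect the main obstacle to be Part~I, specifically the transfer of the vanishing conditions to the ``wrong'' truncation: the clean weight-range estimate only controls $\tau_{\le n}$ from below and $\tau_{\ge n}$ from above, so the half-boundedness of $w$ in the locally and anti-locally finite versions must be combined carefully with the long exact sequence (\ref{evtt}). A secondary point requiring vigilance is that coherence of $R$ is genuinely what keeps finite presentation stable under the kernels, cokernels and extensions appearing both in the images defining virtual $t$-truncations and in the triangle arguments of Parts~II and~III.
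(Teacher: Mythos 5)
Your proposal is correct and follows essentially the same route as the paper's own proof: Proposition \ref{pwfil}(7) plus coherence for the finitely presented values, the weight-range estimate of Proposition \ref{pwrange}(\ref{iwrvt}) combined with the long exact sequence (\ref{evtt}) for transferring the vanishing conditions in Part I, Theorem \ref{trefl} with Proposition \ref{prefl}(\ref{irefl4}) for Part II, and Proposition \ref{pwfil}(4) together with Theorem \ref{tcompws}(II.3) and the Lemma 2.1.4 of \cite{bwcp} argument for Part III. The only differences are that you spell out some steps the paper calls "easily seen" (the triangulatedness of $\cu'$ and the identification of the restricted aisles), which is harmless.
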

\begin{proof}
I.  Recall that virtual $t$-truncations of cohomological functors are cohomological. Moreover, virtual $t$-truncations of $R$-linear functors are obviously $R$-linear.

Now, let $H$ be a functor 
whose values are finitely presented $R$-modules.
Since the category $R-\mmodd$ is an abelian subcategory of $R-\modd$ according to Proposition \ref{pfp}(\ref{ipfp3}) below,
 the values of virtual $t$-truncations of $H$ are finitely presented as well according to Proposition \ref{pwfil}(7).\footnote{Actually, it is not necessary to assume that $R$ is coherent to prove part I of our theorem. Indeed, for any $M\in \obj \cu$ and $n\in \z$ we can complete the morphisms $w_{\le n}M\to w_{\le n+1}M$ and $w_{\ge n-1}M\to w_{\ge n}M$ to distinguished triangles to obtain that the values of 
 $\tau_{\le n }(H)$ and $\tau_{\ge n }(H)$ (see Definition \ref{dvtt}(1)) are quotients of finitely presented $R$-modules by some images of modules of this type. Thus the values of virtual $t$-truncations of $H$ are finitely presented $R$-modules (see Lemma  A.8 of \cite{cohn}).
 
On the other hand, it appears that the subcategory $\cu'$ of $\cu$ does not have to be triangulated if $R$ is not coherent.}

Next, for any $k\in \z$ and any cohomological functor $H$ from $\cu$ the functor $\tau_{\le  k}(H)$ is of weight range $\le k$ according to Proposition \ref{pwrange}(\ref{iwrvt}). Assume that $w$ is bounded below; then 
 for any fixed $k$ and $M\in \obj \cu$ we have $\tau_{\le  k}(H)(M[j])=0$ if $j$ is large enough. Hence if $H$ takes values in $R-\mmodd$  then the functor  $\tau_{\le  k}(H)$ is locally finite.  Applying the long exact sequence (\ref{evtt}) we obtain that the functor $\tau_{\ge  k+1}(H)$ is locally finite if $H$ is. 

Moreover, the corresponding boundedness statement for the case where  $w$ is bounded  above and $H$ is anti-locally finite is dual to the "locally finite" version that we have just verified. 

Lastly, combining 
the 
locally finite and the anti-locally finite cases of our assertion one immediately obtains its finite case. 

II.1. Since $R-\mmodd$ is an exact abelian subcategory of $R-\modd$  (see Proposition \ref{pfp}(\ref{ipfp1})), $\cu'$ is easily seen to be a triangulated subcategory of $\du$.

Next, combining our assumptions with assertion I we obtain that virtual $t$-truncations of functors of the type $H_N$ for $N\in \obj \cu'$ are $\du$-represented by objects of $\cu'$ as well. Thus we can combine Theorem \ref{trefl} with Proposition \ref{prefl}(\ref{irefl4}) to obtain the result. 

2. Certainly, restricting locally or anti-locally finite  functors 
from $\cu$ to $\hw$ yields additive functors from $\hw\opp$ into $R-\mmodd$. This restriction gives an embedding of $\hrt$ into 
 $\adfur(\hw\opp,R-\mmodd)$ according to Proposition \ref{prefl}(\ref{irefl4}). 

Lastly, assume that 
$A$ belongs to $\adfur(\hw\opp,R-\mmodd)$. 
 It remains to  check that the corresponding $w$-pure functor $H_A:\cu\opp\to R-\modd$ provided by Proposition \ref{pwrange}(\ref{iwrpure}) (cf. also Remark \ref{rpure}(2)) is  (anti) locally finite whenever $w$ is bounded below (resp. above).

Applying the fact that the category $R-\mmodd$ is an abelian subcategory of $R-\modd$ once again and combining it with Lemma 2.1.4 of \cite{bwcp} we obtain that the values of $H_A$ are finitely presented as well. Since  $H_A$ is of weight range $[0,0]$ and $w$ is bounded,   
$H_A$ is (anti) locally finite immediately from Proposition \ref{pwrange}(\ref{iwrpureb}).

III.1. Since $w$ is orthogonal to $t_{\du}$, Proposition \ref{pwfil}(4) says that for any $N\in \obj\du$ the functors $\du$-represented by its $t_{\du}$-truncations on $\cu$ are the corresponding virtual $t$-truncations of the functor $H_N$. Thus for any $M\in \obj \cu'$ the objects $t_{\du,\le k}M$ and $t_{\du,\ge k}M$ are objects of $\cu'$ as well according to assertion I; hence $t_{\du}$ restricts to $\cu'$ indeed. 

2. The full faithfulness of the functor $\hrt\to \adfur(\hw\opp,R-\modd)$ is immediate from Theorem \ref{tcompws}(II.3). Moreover, this functor obviously factors through $\adfur(\hw{}\opp,R-\mmodd)$, and arguing similarly to the proof of assertion II.2 we  obtain the equivalence in question. 
\end{proof}

\begin{rema}\label{rsatur}
\begin{enumerate}
\item\label{idercoh} In 
 the examples in \S\ref{sperf1} and \S\ref{sperf2} below 
 the ring $R$ will always be noetherian. So let us demonstrate that non-noetherian coherent rings are also actual (at least) in the context of parts I and III of our theorem.

Similarly to Remark \ref{rkeller}(\ref{ikel5}), we take $R$ to be an arbitrary (not necessarily noetherian) coherent  ring which we have to assume to be commutative here, set $\du=D(R)$; then $\cu=\du^{(\alz)}$ 
is the subcategory of  perfect complexes (see Proposition \ref{pfp}(\ref{ipfp6}) below). 

Hence $\cu$ is equivalent to $K^b(\operatorname{Proj_{fin}}R)$, where $\operatorname{Proj_{fin}}R$ is the category of finitely generated projective $R$-modules, and we set $w$ to be the "stupid" weight structure whose heart (essentially) equals $\operatorname{Proj_{fin}}R$; see Remark \ref{rstws}(1). Since $w$ is bounded, we can apply 
 any of the versions of (parts I and III) of our theorem 
 if we take $\cu'$ to be equal to the category $\cu'_i\subset \du$ for $1\le i\le 3$; here $\cu'_1$ corresponds to  finite cohomological functors from $\cu$ into $R-\modd$, $\cu'_2$ corresponds to locally finite functors, and $\cu_3$ corresponds to anti-locally finite ones. Moreover, it is easily  seen that $\cu'_1\cong D^b(R-\mmodd)$, $\cu'_2\cong D^-(R-\mmodd)$, and $\cu'_3\cong D^+(R-\mmodd)$.
Certainly, the corresponding $t$-structures are the canonical ones. 

This example is certainly closely related to Proposition \ref{pnee1}(1) below; we put it here just to demonstrate that it may be actual to consider the case where $R$ is coherent but not noetherian.

\item\label{ireg} The question whether $\cu=\cu'_1$ in the notation that we have just introduced is well-known to be equivalent to the regularity of $R$.  The reader is recommended to look at the paper \cite{ksosnreg}  for an interesting discussion of this matter (in the language of adjacent structures) in the more general setting of modules over ring spectra.



\item\label{iex} In  \S\ref{sperf1} 
 we will discuss interesting "geometric" examples for our theorem. Unfortunately, this does not include any examples for the locally finite and anti-locally finite versions of part II of the theorem. 


It is also worth noting that  bigger families of examples can be obtained by means of Theorem 0.3 of \cite{neesat} and Theorem 0.3 of \cite{neetc}.

\item\label{ilf}
 The  finite presentation of values of functors and the vanishing conditions can  certainly be treated separately.
So, we could have replaced the category $R-\mmodd$ of finitely presented $R$-modules by any other 
 exact subcategory of $R-\modd$ in our definitions and formulations. In particular, one can consider certain "levels of $\cu$-smallness" of objects of $\du$ (cf. Theorem \ref{tsmash}(II.2)). 





\item\label{iez} Certainly, one can take $R=\z$; this allows to apply parts I and III of our theorem to arbitrary triangulated categories. 
\end{enumerate}
\end{rema}

\begin{coro}\label{csatur} 
Assume 
that $\cu$ is $R$-saturated (where $R$ is a coherent ring) and $w$ is a bounded weight structure on it.
Then the following statements are valid.

1. For any $i\in \z$ and $M\in \obj \cu$ the functors $\tau_{\le i }(H_M)$  and   $\tau_{\ge  i }(H_M)$   are representable, where $H_M=\cu(-,M)$.

2. There exists a $t$-structure right adjacent to $w$. Moreover, $t$ is bounded if $\cu$ is densely generated by a single object $G$.

3. Its heart $\hrt$ is naturally  equivalent to $\adfur(\hw\opp,R-\mmodd)$. 
\end{coro}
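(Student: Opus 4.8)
The plan is to deduce all three assertions from Theorem \ref{tsatur} applied in the degenerate situation $\du=\cu'=\cu$, together with the defining property of $R$-saturatedness and, for the boundedness claim, Proposition \ref{psingen}. First I would unwind Definition \ref{dsatur}(2): $R$-saturatedness says exactly that a functor $\cu\opp\to R-\modd$ is representable if and only if it is finite. In particular each representable functor $H_M=\cu(-,M)$ is finite, and conversely every finite functor is of the form $H_N$ with $N\in\obj\cu$. Taking $\du=\cu'=\cu$, the hypotheses of Theorem \ref{tsatur} are then immediate: $w$ is bounded by assumption; the subcategory $\cu'=\cu$ is (trivially) characterized by the finiteness of $H_N$, since every object lies in $\cu$ and every $H_N$ is finite; and $\cu$ reflects $\cu'=\cu$ by Proposition \ref{prefl}(\ref{irefl1}).

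For assertion 1, since $H_M$ is representable it is finite, so by Theorem \ref{tsatur}(I) each of its virtual $t$-truncations $\tau_{\le i}(H_M)$ and $\tau_{\ge i}(H_M)$ (for every $i\in\z$) is again finite; applying $R$-saturatedness in the reverse direction then shows these finite functors are representable. Assertion 3 is precisely the heart computation of Theorem \ref{tsatur}(II.2), which identifies the Yoneda-type functor $\hrt\to\adfur(\hw\opp,R-\mmodd)$ as an equivalence.

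For assertion 2, Theorem \ref{tsatur}(II.1) produces the (unique) $t$-structure $t$ on $\cu'=\cu$ that is strictly right orthogonal to $w$; as $\cu=\cu'$, right orthogonality coincides with right adjacency by Proposition \ref{portadj}. There remains the boundedness statement under the assumption that $\cu$ is densely generated by a single object $G$, which I would obtain from Proposition \ref{psingen}(2) applied with $\cu'=\cu=\du$. Its running hypothesis $\cu(G,G[j])=\ns$ for $j\ll 0$ is not an extra assumption but a consequence of $R$-saturatedness: the representable functor $H_G$ is finite, hence locally finite, and local finiteness unwinds to $H_G^i(G)=\cu(G,G[i])=\ns$ for $i\ll 0$. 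With this vanishing in hand, together with the orthogonal $t$-structure just built, Proposition \ref{psingen}(2) delivers the boundedness of $t$.

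The only genuinely non-formal point is this last step: one must notice that the auxiliary vanishing condition required by Proposition \ref{psingen}(2) is automatically furnished by the local finiteness of $H_G$, so that no separate hypothesis on $G$ is needed. All the remaining work is a matter of unwinding $R$-saturatedness and specializing Theorem \ref{tsatur} to the case $\du=\cu'=\cu$.
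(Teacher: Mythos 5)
Your proposal is correct and follows essentially the same route as the paper: the paper likewise specializes Theorem \ref{tsatur}(II) to $\du=\cu'=\cu$ (with $R$-saturatedness supplying the identification of finite functors with representable ones) and then deduces boundedness from Proposition \ref{psingen}(2), using that the finiteness of $H_G=\cu(-,G)$ forces $\cu(G,G[j])=\ns$ for $j\ll 0$. Your extra care in checking the hypotheses of Theorem \ref{tsatur}(II) and in deriving assertion 1 via Theorem \ref{tsatur}(I) plus saturatedness merely makes explicit what the paper leaves implicit.
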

\begin{proof}
Putting $\cu'=\du=\cu$ in Theorem \ref{tsatur}(II) we obtain everything except the boundedness of $t$ in assertion 2.

Now, assume that $\cu$ is densely generated by $\{G\}$. Since the functor $H_G=\cu(-,G)$ is finite, $\cu(G,G[j])=\ns$ for almost all $i\in \z$. Thus we can apply Proposition \ref{psingen}(2) for $\cu'=\cu$ to obtain that $t$ is bounded.
\end{proof}

\subsection{On coherent sheaf examples for the theorem}\label{sperf1}

Now assume that $R$ is a (commutative unital)  noetherian ring.

\begin{pr}\label{pnee1}
Let $X$ be a scheme that is proper over $\spe R$.

1. Take $\du=D_{qc}(X)$ (the unbounded derived category of quasi-coherent sheaves on $X$), $\cu=D^{perf}(X)$ (the triangulated category  of perfect complexes of coherent sheaves on $X$), and  $\cu'=D^b_{coh}(X)$ (resp. $\cu'=D_{coh}^-(X)$; here  $D_{coh}^b(X)$ is the bounded derived category of coherent sheaves on $X$, and $D_{coh}^-(X)$ is its bounded above version). 

Then $\cu\cong \cu \opp$, and the "finite versions" of those assumptions of Theorem \ref{tsatur}(I-III) that do not mention $w$  (resp. the "locally finite versions"  of the corresponding assumptions of Theorem \ref{tsatur}(I,III)) are fulfilled for these categories. 


2. Assume that $X$ is a regular scheme (that is proper over $\spe R$). Then the category $\cu=D^{perf}(X)$ equals $\cu'=D^b_{coh}(X)$; thus it is $R$-saturated.

Moreover, $\cu$ 
 is densely generated by a single object $G$. 

\end{pr}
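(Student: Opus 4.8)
The plan is to reduce every clause of the statement to standard structural facts about $\du=D_{qc}(X)$ together with the saturatedness results of \cite{neesat} (and \cite{neetc}); once these inputs are in place, there is essentially nothing to prove beyond matching them to the hypotheses of Theorem \ref{tsatur}. First I would record the classical facts available for $X$ proper over $\spe R$ with $R$ noetherian (so $X$ is noetherian and separated, in particular quasi-compact and quasi-separated): $\du=D_{qc}(X)$ is smashing, its subcategory of compact objects is exactly $\cu=D^{perf}(X)$, and $\cu$ compactly generates $\du$ (see \cite{neebook}); hence $\cu=\du^{(\alz)}$, which furnishes the ambient hypotheses of Theorem \ref{tsatur}(III). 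The self-duality $\cu\cong\cu\opp$ is supplied by the derived dual $\mathbb{D}=R\mathcal{H}om(-,\mathcal{O}_X)$, which preserves perfection and satisfies $\mathbb{D}\circ\mathbb{D}\cong\id$, hence is a contravariant autoequivalence of $\cu$. Finally $\cu$ and $\cu'=D^b_{coh}(X)$ (resp.\ $D^-_{coh}(X)$) are visibly full $R$-linear triangulated subcategories of $\du$, so the structural part of the hypotheses of Theorem \ref{tsatur}(I--III) holds.

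Next I would verify the finiteness characterization, i.e.\ that for $N\in\obj\du$ the Yoneda functor $H_N=\du(-,N)|_{\cu}\colon\cu\opp\to R-\modd$ is finite (resp.\ locally finite) precisely when $N\in D^b_{coh}(X)$ (resp.\ $N\in D^-_{coh}(X)$). The ``only if'' direction is elementary: for perfect $M$ and any $i$ one has $\du(M,N[i])\cong H^i(R\Gamma(X,R\mathcal{H}om(M,N)))$, and since $R\mathcal{H}om(M,N)$ is bounded (resp.\ bounded above) coherent and $X\to\spe R$ is proper, the finiteness theorem for proper morphisms shows these $R$-modules are finitely generated (equivalently finitely presented, as $R$ is noetherian), while boundedness of $M$ and $N$ (resp.\ one-sided boundedness of $N$) forces the requisite vanishing of $\du(M,N[i])$ for $|i|\gg 0$ (resp.\ in the appropriate one-sided range). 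The converse ``if'' direction, together with the representability clause of Theorem \ref{tsatur}(II) asserting that every finite cohomological functor $\cu\opp\to R-\modd$ is of the form $H_N$ for a (unique) $N\in D^b_{coh}(X)$, is exactly the saturatedness statement of \cite{neesat}; I would simply cite it rather than reprove it. The bounded-above form of the same result yields the locally finite half of the characterization, which is all that is needed for the $D^-_{coh}(X)$ case (here only the hypotheses of parts I and III are claimed).

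For the reflection hypothesis of Theorem \ref{tsatur}(II) in the finite version---full faithfulness of the restricted Yoneda functor $D^b_{coh}(X)\to\adfu(\cu\opp,\ab)$---I would again appeal to Neeman's representability results, using that $\cu$ consists of compact generators of $\du$ and that bounded coherent complexes are detected on perfect ones; note that Proposition \ref{prefl}(\ref{irefl1}) does \emph{not} apply, since here $\cu\subset\cu'$ rather than $\cu'\subset\cu$. For part 2, regularity of the noetherian scheme $X$ forces every coherent sheaf to have finite $\mathrm{Tor}$-dimension, so every object of $D^b_{coh}(X)$ is perfect and $\cu=D^{perf}(X)=D^b_{coh}(X)=\cu'$; feeding $\cu'=\cu$ into the finite characterization just established shows that the representable functors into $R-\modd$ are exactly the finite ones, i.e.\ $\cu$ is $R$-saturated. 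Dense generation of $\cu$ by a single object follows from single-object compact generation of $\du$: if $G$ is a perfect compact generator, then the Thomason--Neeman description of compact objects (see \cite{neebook}) gives $\lan G\ra=\du^{(\alz)}=\cu$.

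The main obstacle is the ``if'' direction of the finiteness characterization and the attendant representability of finite functors by bounded coherent complexes; this is precisely where properness of $X$ enters essentially and is the substantive content of Neeman's saturatedness theorem, which I would invoke rather than reconstruct. Everything else---the self-duality, the elementary ``only if'' estimates, the compact-generation and thick-closure facts, and the regularity reduction---is routine bookkeeping that matches the geometric input to the abstract hypotheses of Theorem \ref{tsatur}.
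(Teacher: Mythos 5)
Your proposal is correct, and its core is the same as the paper's: all the substantive content of part 1 is outsourced to Neeman's theorem (Corollary 0.5 of \cite{neesat}), the paper's own proof being essentially that citation plus the observation that compact generation of $\du$ by $\cu$ and the self-duality $\cu\cong\cu\opp$ are well known (the paper also records the small bookkeeping point, which you omit, that $R$-linear and plain additive natural transformations between $R$-linear functors coincide, so that Neeman's $R$-linear statement really does yield the reflection condition of Definition \ref{drefl}). Your extra verifications -- the derived-dual construction of the self-duality, the elementary ``only if'' estimates via properness, and the correct observation that Proposition \ref{prefl}(\ref{irefl1}) is inapplicable here since $\cu\subset\cu'$ rather than the reverse -- are consistent with, though more detailed than, what the paper writes. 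The one genuine divergence is in part 2: for dense generation of $D^{perf}(X)$ by a single object the paper cites Theorem 0.5 of \cite{neestrong}, whereas you combine the Bondal--Van den Bergh single compact generator of $D_{qc}(X)$ (see \cite{bvdb}) with the Thomason--Neeman identification of the compact objects with the thick closure of a compact generator (the same Lemma 4.4.5 of \cite{neebook} that the paper itself invokes in the proof of Proposition \ref{pfp}(\ref{ipfp6})). Your route is arguably preferable at this point: it is more elementary, and it shows that dense generation by one object needs no regularity hypothesis at all (it holds for any $X$ proper over $\spe R$), whereas \cite{neestrong} delivers the stronger conclusion of strong generation, which is not needed for the application in Corollary \ref{csatur}(2).
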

\begin{proof}
1. Since $R$ is noetherian, $X$ is a noetherian scheme; thus it is well known that objects of $\cu$ compactly generate $\du$ and $\cu\cong \cu \opp$. Thus it remains to  apply Corollary 0.5 of \cite{neesat}; note here that the set of $R$-linear transformations between two $R$-linear functors between $R$-linear categories coincides with the set of transformations between the 
 underlying  additive functors.

2. It is well known that in this case $D^{perf}(X)=D^b_{coh}(X)$ indeed; thus $\cu$ is $R$-saturated. Moreover, 
  $\cu$ is densely generated by a single object according to Theorem 0.5 of \cite{neestrong} (see also Remark 0.6 of loc. cit.). 
\end{proof}

\begin{rema}\label{rtem1}
\begin{enumerate}
\item\label{item2}
Thus for $X$ and $\cu$ as in part 2 of the proposition both left and right adjacent $t$-structures to any bounded weight structure exist.

Moreover, any 
weight structure $w$ on $D^{perf}(X)$ (for $X$ that is proper over $\spe R$) gives a smashing $t$-structure on $D_{qc}(X)$ that is right orthogonal to $t$. This $t$ restricts to $D^-_{coh}(X)$ if $w$ is bounded below, and (also) restricts to $D^b_{coh}(X)$ if $w$ is bounded.

\item\label{igor} More generally, if $X$ is a Gorenstein (but not necessarily regular) scheme then the coherent duality functor $D_X$ gives an equivalence $D^{b}_{coh}(X)\opp\to D^b_{coh}(X)$ that restricts to an equivalence $D^{perf}(X)\opp\to D^{perf}(X)$. Hence for any bounded  weight structure $w$ on $D^{perf}(X)$ there also exists a left orthogonal weight structure on $D^b_{coh}(X)$.

\item\label{irecw}
Let us discuss the question which $t$-structures do possess left adjacent weight structures (in the setting of our proposition).  

Suppose that we start with a $t$-structure $t$ on $\cu=\cu'$. The easy Theorem \ref{twfromt}(I.1) below implies that there are enough projectives in $\hrt$ if there exists a weight structure $w$ that is left adjacent to $t$. Moreover, part IV of that theorem says that if this condition is fulfilled, $t$ is bounded, and $\cu\opp$ is $R$-saturated then $w$ does exist. 

Now we combine part 2 of our of our proposition with  Corollary \ref{csatur}(2) 
to obtain the following: if $X$ is a regular scheme that is proper over $\spe R$ and $\cu=D^{perf}(X)$ then there exists a 1-to-1 correspondence between bounded weights structures on $\cu$ and (their right adjacent) bounded $t$-structures such that $\hrt$ has enough projectives. Since $\cu$ is self-dual we also obtain a 1-to-1 correspondence between bounded weights structures on $\cu$ and bounded $t$-structures such that $\hrt$ has enough injectives. Certainly, these two statements can be combined to obtain a rather curious correspondence between those  bounded $t$-structures such that $\hrt$ has enough projectives and those  bounded $t$-structures such that $\hrt$ has enough injectives. Note also that there exists plenty of examples of bounded weight structures on $\cu$ whenever $X=\p^n(\spe R)$ (and $R$ is a regular ring); see 
 part \ref{item3} of this remark.

We will discuss a similar problem for orthogonal structures in Remark \ref{rtem2}(\ref{irecwo}) below.

\item\label{iconj}
The author also conjectures that all non-degenerate weight structures and $t$-structures on $D^{perf}(X)$ are bounded in this case. 

Note here that one can easily obtain non-trivial degenerate weight structures and $t$-structures $D^{perf}(X)$ by gluing (at least) if $X$ is a projective space (say, over a field); cf. part \ref{item3} this remark. Certainly, 
 degenerate weight and $t$-structures are not bounded. 


\item\label{item3} To make our proposition "practical" one needs to have some  bounded weight structures on a triangulated category $\cu$ as in our proposition.

Unfortunately, no "simple" general constructing methods similar to that provided by Theorem 5 of \cite{paucomp} (see Remark \ref{rsmashex}(1)) are available in this setting. However, one can {\it glue} weight structures (see Remark \ref{rsmashex}(2) and Theorem 8.2.3 of \cite{bws}). That is, if a triangulated category $\cu$ is glued from certain $\du$ and $\eu$ in the sense of \cite[\S1.4]{bbd} then any pair of (bounded) weight structures gives a "compatible" weight structure on $\eu$. Note here that one can shift weight structures on $\du$ and $\eu$ in the obvious way; thus a single pair of weight structures on $\du$ and $\eu$ gives a family of weight structures on $\cu$ indexed by $\z\times \z$ (whereas shifting a weight structure of this sort corresponds to adding $(i,i)$ to these parameters).

Now let us pass to more concrete examples; cf. also 
 Remark \ref{rsatur}(\ref{idercoh}). 
 One can construct a rich family of bounded weight structures on 
 $D^{perf}(X)$ at least in the case where $X=\p^n$ for some $n>0$ 
 (one can probably take any regular base ring $R$ here) by means of gluing. More generally, it suffices to assume that 
 $D^{perf}(X)$ possesses a {\it full exceptional collection} of objects.
Since one can "shift weight structures on components", one can obtain plenty of non-trivial weight structures and $t$-structures on $D^{perf}(X)$. 

Another important statement is that bounded weight structures $\cu$ are in one-to-one correspondence with those additive retraction-closed subcategories $\bu$ of $\cu$ such that $\bu$ strongly generates $\cu$ and $\bu$ is  negative in $\cu$; 
 see Proposition \ref{pcneg} below. 
 Thus one may look for negative subcategories in $D^{perf}(X)$ to obtain examples of weight structures.

\item\label{itemh} It appears that the first result in the direction of Proposition \ref{pnee1}(2) was Theorem 2.14 of \cite{bondkaprserr} where the case of a smooth projective variety over a field was considered. However, if $X$ is singular then one has to take $\cu'\neq \cu$; the corresponding statements were only recently established by Neeman, and they motivated our Definition \ref{drefl}  along with those results of this paper that depend on it. 

Recall also  that Theorem 4.3.4 of \cite{bvdb} 
 gives a certain a "non-commutative geometric" example of an $R$-saturated category (for $R$ being a field).

\item\label{item4} The author suspects that some of the statements in our theorem can be generalized to the case where $R$ is (not not necessarily noetherian itself but) coherent and can be presented as a "flat enough" direct limit of noetherian rings.
\end{enumerate}
\end{rema}

\subsection{Other possible examples related to coherent sheaves}\label{sperf2}

Once again, $R$ is a noetherian ring. We recall some more results of Neeman and D. Murfet.

\begin{pr}\label{pnee2}

Let $X$ be a scheme that is proper over $\spe R$; take $\cu= D^b_{coh}(X)\opp$.

1.  Assume in addition that {\it regular alterations} (see Remark \ref{rtem2}(\ref{item1}) below) exist for all 
 integral closed subschemes of $X$; take the categories $\du=D_{qc}(X)\opp$ 
 and $\cu'=D^{perf}(X)\opp$ (resp. $\cu'=D^-_{coh}(X)\opp$).

Then the "finite versions" of those assumptions of Theorem \ref{tsatur}(I,II) that do not mention $w$ 
  (resp. the "anti-locally finite versions"  of the corresponding assumptions of Theorem \ref{tsatur}(I)) are fulfilled for 
	our $(\cu,\cu',\du)$. 

2. Take $\du$ 
to be  
the mock homotopy category $\kmp X)$ of projectives over $X$ as defined in   \cite[Definition 3.3]{mur}; see Remark \ref{rtem2}(\ref{rmur}) below for more detail. Then $\du$ is compactly generated by (the image with respect to a full embedding of) $\cu$, and $\cu$ essentially equals the subcategory of compact objects of $\du$.

\end{pr}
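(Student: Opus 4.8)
The plan is to verify the enumerated hypotheses of Theorem \ref{tsatur} by feeding our geometric input into the representability theorems of Neeman (for the finiteness characterizations of part~1) and into Murfet's construction (for part~2). The genuine mathematical content sits in those external results; our role is to match them to the hypotheses, the only subtlety being the systematic passage to opposite categories, which interchanges the $i\gg 0$ and $i\ll 0$ vanishing conditions and thereby explains why the \emph{anti}-locally finite case appears here (in contrast to the locally finite case of Proposition \ref{pnee1}).

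First I would record the basic setup. As $R$ is noetherian and $X$ is proper over $\spe R$, the scheme $X$ is noetherian, so finitely presented and finitely generated $R$-modules coincide. The inclusions $D^{perf}(X)\subset D^b_{coh}(X)\subset D_{qc}(X)$ and $D^b_{coh}(X)\subset D^-_{coh}(X)\subset D_{qc}(X)$ are full $R$-linear triangulated; passing to opposites gives $\cu',\cu\subset \du$, which is the common setup of Theorem \ref{tsatur}. Next I would establish the characterization of $\cu'$. Under the identification $\cu\opp\cong D^b_{coh}(X)$ the $\du$-Yoneda functor $H_N$ attached to $N\in \obj D_{qc}(X)$ becomes, up to the shift conventions, the restriction to $D^b_{coh}(X)$ of $A\mapsto D_{qc}(X)(N,A)$. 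I must check that this functor is finite (resp. anti-locally finite) exactly when $N$ lies in $D^{perf}(X)$ (resp. $D^-_{coh}(X)$), and, for the finite case feeding part~II, that every finite functor $\cu\opp\to R-\mmodd$ extending to $\du$ is represented by such an $N$. This is precisely the content of Neeman's representability theorems: the hypothesis that regular alterations exist for all integral closed subschemes of $X$ is exactly what renders the (possibly singular) scheme $X$ tractable by the approximability machinery, making the finite-functor representability of Theorem~0.3 of \cite{neesat} and Theorem~0.3 of \cite{neetc} available; I would cite these (suitably dualized, compare the use of Corollary~0.5 of \cite{neesat} in Proposition \ref{pnee1}) as the input for the conditions on $\cu'$.

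It then remains to treat reflection and part~2. For the finite version $\cu'=D^{perf}(X)\opp$ is contained in $\cu=D^b_{coh}(X)\opp$, since perfect complexes are bounded coherent ones; hence $\cu'\subset \cu$ and $\cu$ reflects $\cu'$ automatically by Proposition \ref{prefl}(\ref{irefl1}), completing the part~II hypotheses. For the anti-locally finite version the proposition asserts only the part~I hypotheses, so no reflection is required --- which is fortunate, since there $D^b_{coh}(X)\subset D^-_{coh}(X)$ and the inclusion runs the other way. For part~2 I would take $\du=\kmp X)$ and invoke \cite{mur}: by the duality between the mock homotopy category of projectives and Krause's homotopy category of injectives, $\du$ is compactly generated and its compact objects form, via the stated full embedding, a category equivalent to $D^b_{coh}(X)\opp=\cu$.

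The hard part throughout will be transcribing Neeman's representability statements correctly through the opposite-category and Grothendieck-duality bookkeeping, and confirming that our finiteness and vanishing conventions align with his (in particular that the $\du$-representing object automatically lands in the predicted $\cu'$); the remaining verifications are formal.
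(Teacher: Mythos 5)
Your proposal is correct and takes essentially the same route as the paper, whose proof of part 1 is simply the citation ``this is most of Theorem 0.2 of \cite{neetc}'' (the geometric representability statement under the regular-alterations hypothesis; your pointers to Theorem 0.3 of \cite{neesat} and Theorem 0.3 of \cite{neetc} are the more general versions mentioned in Remark \ref{rsatur}(\ref{iex})), and whose proof of part 2 is the citation of Theorems 4.10 and 7.4 of \cite{mur}. Your extra bookkeeping --- the passage to opposite categories explaining the anti-locally finite case, and reflection for the finite case via $\cu'\subset\cu$ and Proposition \ref{prefl}(\ref{irefl1}) --- is exactly the intended way these external results plug into Theorem \ref{tsatur}.
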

\begin{proof}
1.  This is most of Theorem 0.2 of \cite{neetc}.

2. See Theorems 4.10 and 7.4 of \cite{mur}.
\end{proof}

\begin{rema}\label{rtem2}
\begin{enumerate}
\item\label{iunb} Now let us discuss the relation of these statements to the main subject of the paper.

The main problem is that if $X$ is not regular then it is well-known that there exist objects $M$ and $N$ of $\cu\opp=D^b_{coh}(X)$ such that $\cu\opp(M,N[i])\neq \ns$ for arbitrarily large values of $i$ (this is an easy consequence of the Serre criterion; cf. Proposition \ref{pnee1}(1) and Remark \ref{rsatur}(\ref{ireg})); hence there cannot exist any bounded weight structures on $\cu$. Thus one has no chance to apply the finite version of Theorem \ref{tsatur} in this setting.

The author does not know whether there can exist bounded 
 below weight structures on $\cu$ in this case. On the other hand, if $X=\spe R$ then it is easily seen that that the stupid weight structure on the category $K^-(\operatorname{Proj}R)$ of bounded above complexes of $R$-modules restricts to its subcategory $\cu\opp=D^b_{coh}(X)$; certainly, the corresponding weight structure on $\cu$ is bounded 
above. The author suspects that this example can be extended at least to the case $X=\p^n(\spe R)$ for any $n\ge 0$; cf.  Remark \ref{rtem1}(\ref{item3}).

\item\label{rmur} One can obtain a certain $t$-structure on a subcategory  of $\du=\kmp X)$ by means of Theorem \ref{tsatur}(III) if one takes the subcategory $\cu'$ of $\du$ corresponding to those anti-locally finite functors from $\cu=D^b_{coh}(X)\opp$ that are represented by objects of $\du$ and chooses a bounded 
above weight structure on $\cu$.

On the other hand, the embedding  $D^b_{coh}(X)\opp\to \kmp X)$ is given by a rather non-trivial construction from \S7 of \cite{mur}. So, the author does not currently know how to compute this category $\cu'$. Note however that the aforementioned example of a bounded below weight structure on $\cu$ (in the case $X=\spe R$) yields that our theory in non-vacuous in this setting.

\item\label{item1} So, Theorem \ref{satur} does not allow us to obtain any orthogonal $t$-structures from Proposition \ref{pnee2}(1). However, we will soon explain that that the latter proposition is useful for "recovering" left orthogonal weight structures. 

Thus it is worth 
 recalling that alterations were introduced in \cite{dej}; they generalize Hironaka's resolutions of singularities. Since the latter exist for arbitrary quasi-excellent $\spe\q$-schemes according to  Theorem 1.1 of \cite{temr}, part 1 of our proposition can be applies whenever $R$ is an quasi-excellent noetherian $\q$-algebra. Moreover, regular alterations of all integral (closed) subschemes of $X$ exist whenever $X$ is of finite type over a scheme $S$ that is quasi-excellent of dimension at most $3$; see Theorem 1.2.5 of \cite{tema}.

\item\label{irecwo} Similarly to Remark \ref{rtem1}(\ref{irecw}), let us now discuss to which extent the orthogonality relation between weight structures and certain $t$-structures in bijective in the setting of Proposition \ref{pnee1}(1). 

So, assume that $X$ is proper over $\spe R$. If  $w$ is a bounded weight structure on $\cu=D^{perf}(X)$ then there exists an orthogonal $t$-structure $t$ on  $\cu'= D^b_{coh}(X)$  such that $\hrt\cong \adfur(\hw\opp,R-\mmodd)$; see Remark \ref{rtem1}(\ref{item2}). 

Conversely, assume that $t$ is a bounded $t$-structure  on $\cu'$  such that $\hrt$ is equivalent to the category of $R$-linear functors from an $R$-linear category $\hu$ into $R-\mmodd$, and regular alterations exist for all 
 integral closed subschemes of $X$. Combining Proposition \ref{pnee2}(1) with Proposition \ref{psatw} below we obtain that  there exists a bounded weight structure $w''$ on a subcategory $\cu''$ of $\cu$ that is strictly 
	 orthogonal to $t$.
Moreover, if $t$ is actually orthogonal to a bounded weight structure $w$ on $\cu$ then we obtain $\cu''=\cu$ and $w''=w$ here; see Remark \ref{recw}.

 So the only obstacle for obtaining a one-to-one correspondence for these $\cu$ and $\cu'$ (and under our assumptions on $X$) is that we do not know whether $t$ that is orthogonal to a bounded weight structure on $\cu$ as above is necessarily bounded. Possibly, this question is related to Theorem 0.15 of \cite{neestrong}.
\end{enumerate}
\end{rema}

\subsection{On perfect complexes and coherent rings: a reminder}\label{scoher}

We will recall some basics on finitely presented modules and coherent rings.  Below we will only consider left $R$-modules, where $R$ is associative unital ring; 
 moreover, recall that in \S\ref{satur} we assume that $R$ is commutative.

\begin{defi}\label{dfp}
\begin{enumerate}
\item\label{idfp1}
We will say that a (left) $R$-module $M$ is {\it finitely presented} if there exists an exact sequence $P_1\to P_0\to M\to 0$ of $R$-modules, where $P_i$ are finitely generated $R$-projective.

\item\label{idfp2} We will say that $R$ is (left) {\it coherent} if any finitely generated left ideal of $R$ is finitely presented.

\item\label{idfp3} Moreover, $R$ is (left) {\it semi-hereditary} if any finitely generated left ideal of $R$ is projective.

\item\label{idfp4} We will use the notation $D(R)$ for the derived category of (left) $R$ modules. Moreover, we will write $D^{perf}(R)$ for the full 
  subcategory of $D(R)$ of {\it perfect complexes}, i.e. its objects are quasi-isomorphic to bounded complexes of finitely generated projective $R$-modules. 
\end{enumerate}
\end{defi}

Now we recall some basic properties of these notions.

\begin{pr}\label{pfp}

\begin{enumerate}
\item\label{ipfp1} Any valuation ring is semi-hereditary.

\item\label{ipfp2} All semi-hereditary and (left) noetherian rings are coherent.

\item\label{ipfp3} If $R$ is coherent then any finitely generated submodule of a finitely presented module is finitely presented, and finitely presented  modules form an exact abelian subcategory of $R-\modd$.


\item\label{ipfp5} If $R$ is semi-hereditary and $P$ is a finitely generated projective $R$-module then any finitely generated submodule of $P$ is projective.

\item\label{ipfp4} If $R$ is coherent  and a finitely presented $R$-module $M$ has an $R$-projective resolution of length $n$ then it also possesses a projective resolution of length $n$ whose terms are finitely presented $R$-modules.

\item\label{ipfp6} $D(R)$ is compactly generated by $\{R\}$ (considered as a left module over itself and put in degree $0$ as a complex), and $D(R)^{(\alz)}=D^{perf}(R)$. 
\end{enumerate}

\end{pr}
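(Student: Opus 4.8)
The plan is to establish these standard ring- and homological-theoretic facts in an order that lets each build on the previous ones, citing the classical literature (principally \cite{cohn}) for the purely module-theoretic inputs. I would begin with assertion \ref{ipfp1}: a valuation ring is an integral domain whose ideals are totally ordered by inclusion, so every finitely generated ideal is principal; a nonzero principal ideal $aR$ is isomorphic to $R$ (as $R$ has no zero divisors) and hence free, while the zero ideal is trivially projective, so every finitely generated ideal is projective, which is semi-heredity. For assertion \ref{ipfp2} I would first recall that a finitely generated projective module is finitely presented: realizing it as the image of an idempotent $e$ on $R^n$ yields a presentation $R^n \xrightarrow{1-e} R^n \xrightarrow{e} P \to 0$. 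Then if $R$ is semi-hereditary every finitely generated ideal is projective, hence finitely presented, so $R$ is coherent by definition; and if $R$ is noetherian every finitely generated ideal $I$ is a submodule of the finitely generated module $R$, so the kernel of any $R^n\twoheadrightarrow I$ is again finitely generated, making $I$ finitely presented.

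Next I would treat assertion \ref{ipfp5} by induction on $n$, where $P$ is a direct summand of $R^n$ so that a finitely generated submodule $N\subset P$ is a finitely generated submodule of $R^n$. Projecting onto the last coordinate gives a short exact sequence $0\to N\cap R^{n-1}\to N\to I\to 0$ with $I$ a finitely generated ideal; semi-heredity makes $I$ projective, the sequence splits, and $N\cap R^{n-1}$ appears as a finitely generated direct summand of $N$ inside $R^{n-1}$, reducing to the inductive hypothesis (the base case $n=1$ being the definition). For assertion \ref{ipfp3} I would invoke the classical characterization of coherent rings (Theorem A.9 and Lemma A.8 of \cite{cohn}): coherence is equivalent to the assertion that finitely generated submodules of finitely presented modules are finitely presented. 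From this the closure of $R-\mmodd$ under $\cok$, $\imm$, $\ke$ and extensions in $R-\modd$ follows formally; for a map $f\colon M\to N$ between finitely presented modules, $\imm f$ is a finitely generated submodule of $N$ hence finitely presented, $\cok f$ is then a quotient of the finitely presented $N$, and $\ke f$ is a finitely generated submodule of the finitely presented $M$ (the kernel of $M\twoheadrightarrow\imm f$ being finitely generated), hence again finitely presented.

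For assertion \ref{ipfp4} the plan is to build, step by step, a resolution of $M$ by finitely generated free modules: since $M$ is finitely presented one starts with $R^{a_0}\twoheadrightarrow M$ having finitely generated kernel $K_0$, and coherence (via assertion \ref{ipfp3}) guarantees that each successive syzygy $K_i$, being a finitely generated submodule of a finitely presented module, is finitely presented, so the process continues with finitely generated free terms. A generalized Schanuel's lemma argument then shows that if $M$ admits a projective resolution of length $n$ the $n$-th syzygy $K_{n-1}$ of our resolution differs from the top term of that resolution by projective summands and is therefore projective; truncating gives a length-$n$ resolution with finitely presented (indeed finitely generated projective) terms. Finally, for assertion \ref{ipfp6} I would note that $D(R)(R,-)=H^0(-)$ commutes with coproducts, so $R$ is compact, and that $H^*$ detects the zero object, so $\{R\}$ generates $D(R)$; perfect complexes are then compact since the compact objects form a thick subcategory containing $R$. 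The main obstacle is the reverse inclusion $D(R)^{(\alz)}\subset D^{perf}(R)$, that every compact object is perfect: this is the deepest input, and I would supply it by the standard characterization of compact objects in a compactly generated triangulated category (see \cite{neebook}), which identifies the compacts with the thick subcategory generated by the compact generating set, here the thick closure of $\{R\}$, namely $D^{perf}(R)$.
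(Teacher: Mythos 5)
Your proposal is correct and takes essentially the same route as the paper: parts (1)--(3) and (5) rest on the same classical module-theoretic facts (the paper simply cites Cohn and Swan for what you spell out), part (4) is the same syzygy-theoretic induction (the paper uses dimension shifting where you use generalized Schanuel's lemma, an equivalent standard device), and part (6) is the same appeal to Neeman's identification of the compact objects of $D(R)$ with the thick, i.e.\ Karoubi-closed, triangulated subcategory generated by $\{R\}$. The only point the paper makes explicit that you gloss over is that $D^{perf}(R)$ is Karoubian, which is what identifies it with that thick closure; since this is well known, there is no real gap.
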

\begin{proof}
All of these statements appear to be rather well-known. Moreover, assertions \ref{ipfp1} and \ref{ipfp2} are obvious (note that any finitely generated projective $R$-module is finitely presented);  assertion \ref{ipfp3} is immediate from Theorem 2.4 of \cite{swancoh} along with Theorem A.9 of \cite{cohn} (where right modules over a ring $R$ were considered) and assertion \ref{ipfp5} is straightforward from  Corollary 0.3.3 of ibid. 

\ref{ipfp4}. We recall that if $0\to N\to P\to Q\to 0$ is an exact sequence of $R$-modules, $P$ is projective, and $Q$ is of projective dimension at most $j$ for some $j>0$ (i.e., $Q$ has a projective resolution of length $j$)  then $N$ is of projective dimension at most $j-1$. Hence the following easy inductive argument gives the statement: if $n=0$ then $M$ is projective and finitely presented itself; otherwise we can take an exact sequence $0\to N\to P\to M\to 0$ with $P$ being finitely presented projective to obtain that $N$ is of projective dimension at most $n-1$ and also finitely presented according to assertion \ref{ipfp3}.

\ref{ipfp6}. $D(R)$ is obviously compactly generated by $\{R\}$. Applying Lemma 4.4.5 of \cite{neebook} we obtain that $D(R)^{(\alz)}$ 
 equals $\lan \{R\}\ra$ (i.e.,  the subcategory of $\cu$ densely generated by $\{R\}$). Thus it remains to note that $D^{perf}(R)$ is a full strict triangulated subcategory of $D(R)$ that obviously lies in 
 $\lan \{R\}\ra$ (look at the distinguished triangles corresponding to stupid truncations of complexes), and to obtain the equality in question we apply the fact that $D^{perf}(R)$ is Karoubian (that is well-known and also follows immediately from Proposition \ref{pcneg}(2) below).  
\end{proof}


\section{Converse results: the existence of orthogonal weight structures}\label{sortw}

In this section we 
 study the question 
 when a $t$-structure  possesses a left adjacent or orthogonal weight structure. So, in certain cases we are able to recover $w$ from a right adjacent $t$-structure $t$ that can be constructed using the results of previous sections.

For this purpose in \S\ref{slemma} we recall some statements related to the construction of weight structures.

In \S\ref{sdual} we recall the (aforementioned) general definition of duality between two triangulated categories and construct an interesting family of examples.

In \S\ref{sconstrw} we prove that the existence of a left adjacent weight structure is closely related to the existence of enough projectives in the heart of $t$. We also prove the existence of a left orthogonal 
 weight structure in a context related to Theorem \ref{tsatur} and Proposition \ref{pnee1}(2). 

\subsection{Some lemmas 
 and the existence of weight structures: a reminder}\label{slemma}

\begin{pr}\label{pstar}
 Assume that $A$ and $B$ 
 are extension-closed classes of objects of $\cu$. 

 I. Assume that $A\perp B[1]$. Then the class $A\star B$  of all extensions of elements of $B$ by elements of $A$ is extension-closed as well.

II. Assume in addition that $\cu$ is smashing, and $A$ and $B$ are closed with respect to $\cu$-coproducts. 

1. Then  $A\star B$  is closed with respect to $\cu$-coproducts as well.

2. Assume that $A$ 
 is closed either with respect to $[-1]$ or with respect to $[1]$. Then $A$ 
 is retraction-closed in $\cu$.  


\end{pr}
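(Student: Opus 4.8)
For Part I, I would start from a distinguished triangle $C_1\xrightarrow{f} D\xrightarrow{g} C_2\xrightarrow{h} C_1[1]$ with $C_1,C_2\in A\star B$, and fix for $i=1,2$ a triangle $A_i\xrightarrow{\iota_i}C_i\xrightarrow{p_i}B_i\to A_i[1]$ with $A_i\in A$, $B_i\in B$; the goal is to exhibit $D$ as an extension of some $B_D\in B$ by some $A_D\in A$. The natural guess is that $A_D$ is an extension of $A_2$ by $A_1$ and $B_D$ an extension of $B_2$ by $B_1$ (these then lie in $A$ resp.\ $B$ by extension-closedness). The orthogonality hypothesis enters precisely at the linking step: the composite $A_2\xrightarrow{\iota_2}C_2\xrightarrow{h}C_1[1]\xrightarrow{p_1[1]}B_1[1]$ is a morphism in $\cu(A_2,B_1[1])$, hence vanishes since $A_2\in A$, $B_1[1]\in B[1]$ and $A\perp B[1]$. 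Consequently $h\iota_2$ factors through $\iota_1[1]\colon A_1[1]\to C_1[1]$, yielding $\alpha\colon A_2\to A_1[1]$ with $\iota_1[1]\circ\alpha=h\iota_2$. I would then complete $\alpha$ to a triangle $A_1\to A_D\to A_2\xrightarrow{\alpha}A_1[1]$, so $A_D\in A$. The identity $\iota_1[1]\circ\alpha=h\iota_2$ says exactly that $(\iota_1,\iota_2)$ is a commutative square between the connecting maps $\alpha$ and $h$; rotating both triangles so that $\alpha$ and $h$ become first morphisms, the axiom completing a commutative square to a morphism of triangles produces $\beta\colon A_D\to D$ giving a morphism of triangles from $(A_1\to A_D\to A_2\to A_1[1])$ to $(C_1\to D\to C_2\to C_1[1])$ with verticals $\iota_1,\beta,\iota_2$. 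The $3\times 3$ (nine) lemma then shows that the cones of the three verticals form a distinguished triangle $B_1\to B_D\to B_2\to B_1[1]$ with $B_D:=\co(\beta)$, so $B_D\in B$ by extension-closedness; and since $\co(\beta)=B_D$, the triangle $A_D\xrightarrow{\beta}D\to B_D\to A_D[1]$ witnesses $D\in A\star B$. As $0$ is an extension of $0\in B$ by $0\in A$, the class $A\star B$ is extension-closed.

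For Part II.1 I would take a family $C_i\in A\star B$ with chosen triangles $A_i\to C_i\to B_i\to A_i[1]$ and form their coproduct: in a smashing category a coproduct of distinguished triangles is distinguished (Remark 1.2.2 of \cite{neebook}, as already used in Proposition \ref{psmash}), and since $A,B$ are closed under coproducts we get $\coprod A_i\in A$, $\coprod B_i\in B$, whence $\coprod C_i\in A\star B$ (orthogonality is not needed here). For Part II.2, let $X$ be a retract of $Y\in A$, say $\id_X=rs$ with $X\xrightarrow{s}Y\xrightarrow{r}X$, and set $P:=\coprod_{n\ge0}Y\in A$. Since $\cu$ is smashing it has countable coproducts, so idempotents split in it, and I would treat the two shift hypotheses separately. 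If $A$ is closed under $[1]$, I would realize $X$ as the homotopy colimit of $Y\xrightarrow{sr}Y\xrightarrow{sr}\cdots$, obtaining a distinguished triangle $P\to P\to X\to P[1]$; rotating it exhibits $X$ as an extension of $P[1]$ by $P$, and as $P[1]\in A$, extension-closedness gives $X\in A$. If instead $A$ is closed under $[-1]$, I would use the Eilenberg swindle: idempotent splitting gives $Y\cong X\oplus Z$, hence $P\cong X\oplus P$, yielding a split distinguished triangle $P[-1]\to X\to P\to P$ that exhibits $X$ as an extension of $P$ by $P[-1]\in A$, so again $X\in A$.

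The main obstacle lies in Part I, where two points require care: (a) using $A\perp B[1]$ to produce the compatibility morphism $\alpha$, and (b) invoking the $3\times 3$ lemma so that the $A$- and $B$-parts assemble into a single extension triangle for $D$ — the latter relying on choosing $\beta$ so that the whole nine-diagram closes up, which the triangle-morphism-completion axiom together with the octahedral axiom guarantees. In Part II.2 the only subtlety is the asymmetry between the two shift hypotheses: the homotopy-colimit triangle naturally yields the $[1]$-case whereas the swindle yields the $[-1]$-case, and since $\cu$ is assumed only smashing (not cosmashing) these two cases are genuinely not dual and must be handled separately.
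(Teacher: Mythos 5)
Your proof is correct, but it is worth noting that the paper does not actually prove Proposition \ref{pstar} directly: it disposes of parts I and II.1 by citing Proposition 2.1.1 of \cite{bsnew}, and of part II.2 by citing Corollary 2.1.3(2) and Remark 2.1.4(4) of that paper. What you have written is a self-contained reconstruction of the content behind those citations, and it follows the standard lines: in part I you use $A\perp B[1]$ exactly where it is needed, namely to kill the composite $p_1[1]\circ h\circ\iota_2$ and so lift $h\iota_2$ through $\iota_1[1]$ to a map $\alpha\colon A_2\to A_1[1]$, and you then invoke the nine ($3\times 3$) lemma --- which is indeed a consequence of the octahedral axiom, so the appeal is legitimate, provided (as you say) $\beta$ is chosen as part of the nine-diagram rather than as an arbitrary completion --- to make the $A$-parts and the $B$-parts assemble into a single extension triangle $A_D\to D\to B_D\to A_D[1]$; part II.1 is the observation that coproducts of distinguished triangles are distinguished; part II.2 is the Eilenberg swindle, with the $[1]$- and $[-1]$-cases rightly treated by separate, non-dual arguments since $\cu$ is only assumed smashing. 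Two small simplifications are available. First, you do not need countable coproducts (nor idempotent splitting) to get $Y\cong X\oplus Z$: in any triangulated category a retract is automatically a direct summand, as the paper itself records in a footnote in \S\ref{snotata}. Second, the homotopy colimit in the $[1]$-case is also avoidable: the swindle isomorphism $P\cong X\oplus P$ already gives a split distinguished triangle $P\to P\to X\xrightarrow{0}P[1]$, and rotating it forward exhibits $X$ as an extension of $P[1]$ by $P$, so both cases of II.2 follow from the swindle alone. What your route buys over the paper's is transparency --- the reader sees precisely where orthogonality and the octahedral axiom enter --- at the cost of length; the paper's citation keeps the text short but leaves the mechanism invisible.
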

\begin{proof}
All of these statements are rather easy.

Assertions I and II.1 
 immediately follow from Proposition 2.1.1 of \cite{bsnew}. Assertion II.2 is a straightforward consequence of Corollary 2.1.3(2) of 
 ibid. (see  Remark 2.1.4(4) of ibid.)
\end{proof}

We also recall a generalization of a well-known existence of weight structures result from \cite{bws}.

\begin{pr}\label{pcneg}
Let $\bu$ be an 
 additive negative subcategory (see Definition \ref{dwso}(\ref{id6})) 
of a triangulated category $\cu$ such that 
$\cu$ is densely generated by $\obj \bu$. 

 1. Then the envelopes  (see \S\ref{snotata}) $\cu_{w\le 0}$ and $\cu_{w\ge 0}$ of the classes $\cup_{i\le 0} \obj \bu[i]$ and $\cup_{i\ge 0} \obj \bu[i]$, respectively, give a  bounded weight structure $w$ on $\cu$, and 
 $\hw$ equals  $\kar_{\cu}(\bu)$. 

2.  $\obj \kar_{\cu}(\bu)$  strongly generates $\cu$. Moreover, $\cu_{w\le 0}$ (resp.  $\cu_{w\ge 0}$) is the extension-closure of $\cup_{i\le 0}\obj \kar_{\cu}(\bu)[i]$ (resp. of $\cup_{i\ge 0}\obj \kar_{\cu}(\bu)[i]$). 
\end{pr}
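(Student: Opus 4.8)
The plan is to verify the four axioms of Definition \ref{dwstr} for the couple $(\cu_{w\le 0},\cu_{w\ge 0})$ defined as the envelopes of $\cup_{i\le 0}\obj\bu[i]$ and $\cup_{i\ge 0}\obj\bu[i]$, and then to read off the heart together with the statements of part 2. Axiom (i) is immediate, since an envelope is retraction-closed by definition. Axiom (ii) follows from monotonicity of the envelope construction: as $\cup_{i\le 0}\obj\bu[i]\subset \cup_{i\le 0}\obj\bu[i+1]$, passing to envelopes gives $\cu_{w\le 0}\subset\cu_{w\le 0}[1]$, and dually for $\cu_{w\ge 0}$. For axiom (iii) I would first note that negativity of $\bu$ (Definition \ref{dwso}(\ref{id6})) yields $\obj\bu[i]\perp\obj\bu[j]$ whenever $j-i\ge 1$, i.e. the generators of $\cu_{w\le 0}$ are orthogonal to those of $\cu_{w\ge 1}$; since the relation $X\perp Y$ is stable under extensions, finite direct sums, and retracts in each variable, this orthogonality propagates to the envelopes, giving $\cu_{w\le 0}\perp\cu_{w\ge 0}[1]$.

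The substantial point is axiom (iv). First I would establish it on the strict full triangulated subcategory $\cu_0$ strongly generated by $\bu$, in which every object is a finite iterated cone of shifts of objects of $\bu$. Here weight decompositions are produced by induction on the number of such cones: the class of objects admitting weight decompositions at every level contains $\obj\bu$ (using that each $B\in\obj\bu$ lies in $\cu_{w=0}$), is stable under shifts, and is stable under extensions by the classical octahedral argument. Indeed, given a triangle $A\to M\to B\to A[1]$ with chosen decompositions of $A$ and $B$, the orthogonality axiom forces the composite $w_{\le 0}B\to A[1]\to (w_{\ge 1}A)[1]$ to vanish, so the relevant connecting map factors and one assembles $w_{\le 0}M\in\cu_{w\le 0}$ and $w_{\ge 1}M\in\cu_{w\ge 1}$ as extensions, then checks via the octahedron that they form a decomposition of $M$. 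This is exactly the existence argument of \cite[Theorem 4.3.2]{bws}, and it endows $\cu_0$ with a bounded weight structure $w_0$ whose heart is $\kar_{\cu_0}(\bu)$.

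To pass from $\cu_0$ to $\cu=\lan\bu\ra$ I would use that $\lan\bu\ra$ is precisely the retraction-closure of $\cu_0$ inside $\cu$, so that every object of $\cu$ is a retract of an object of $\cu_0$; the extension of a weight structure along such an idempotent completion is established in \cite{bonspkar}, and it produces on $\cu$ the weight structure whose classes are the retraction-closures of $(\cu_0)_{w_0\le 0}$ and $(\cu_0)_{w_0\ge 0}$ — these coincide with the envelopes $\cu_{w\le 0},\cu_{w\ge 0}$ — and whose heart is $\kar_\cu(\hw_0)=\kar_\cu(\bu)$. The inclusion $\kar_\cu(\bu)\subset\hw$ is also visible directly, since $\obj\bu\subset\cu_{w=0}$ and $\cu_{w=0}=\cu_{w\le 0}\cap\cu_{w\ge 0}$ is additive and retraction-closed; the reverse inclusion is the genuinely weight-theoretic input, where the idempotent-completion machinery (equivalently, the splitting Proposition \ref{pbw}(\ref{isplit}) applied to the layers of a weight decomposition of a weight-zero object) does the real work. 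Uniqueness of the resulting $w$ follows from Proposition \ref{pbw}(\ref{iuni}).

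Finally, part 2 is deduced from part 1 together with Proposition \ref{pbw}(\ref{igenlm}) and the boundedness of $w$. Since every object lies in some $\cu_{[a,b]}$ and $\cu_{w=j}=\kar_\cu(\bu)[j]$, Proposition \ref{pbw}(\ref{igenlm}) presents each object as an iterated extension of shifts of objects of $\kar_\cu(\bu)$; as the strict triangulated subcategory strongly generated by $\kar_\cu(\bu)$ is extension-closed, this shows that $\obj\kar_\cu(\bu)$ strongly generates $\cu$. Taking $b=0$ (resp. $a=0$) and using boundedness below (resp. above) to let $a\to-\infty$ (resp. $b\to+\infty$) identifies $\cu_{w\le 0}$ (resp. $\cu_{w\ge 0}$) with the extension-closure of $\cup_{i\le 0}\obj\kar_\cu(\bu)[i]$ (resp. $\cup_{i\ge 0}\obj\kar_\cu(\bu)[i]$), the reverse inclusions being clear from extension-closedness of $\cu_{w\le 0}$ and $\cu_{w\ge 0}$. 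I expect the main obstacle to be the passage to the retraction-closure in axiom (iv) and the corresponding reverse heart inclusion: a naive lift of the defining idempotent to a weight decomposition need not be idempotent, and this is precisely the gap that the idempotent-completion results for weight structures are designed to bridge.
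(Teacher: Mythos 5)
Your proposal is correct in substance, but you should know that the paper's own proof of this proposition is a single line: it observes that the statement is ``just (most of) Corollary 2.1.2 of \cite{bonspkar}''. What you have written is, in effect, a reconstruction of how that corollary is proved: first the octahedron/induction argument on the subcategory $\cu_0$ strongly generated by $\bu$, then passage to $\cu=\lan\obj\bu\ra$, which is an idempotent extension of $\cu_0$ (one indeed has $\cu_0\subset\cu\subset\kar(\cu_0)$, since retracts of objects of a full strict triangulated subcategory form a triangulated subcategory), via the extension-of-weight-structures theorem of \cite{bonspkar}. So your route and the paper's coincide mathematically; you merely localize the citation to the two genuinely hard steps instead of citing the whole statement. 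Two inaccuracies in your write-up deserve mention, though neither is fatal because both points are covered by the reference you invoke. First, Theorem 4.3.2 of \cite{bws} assumes that the ambient category is Karoubian (removing that hypothesis is precisely the purpose of \cite{bonspkar}); hence on the possibly non-Karoubian $\cu_0$ the classical argument yields only axioms (ii)--(iv) for the extension-closures, and axiom (i) must be repaired, e.g., by replacing both classes with their retraction-closures inside $\cu_0$ (this is harmless: (ii)--(iv) survive, since (iv) only asks for decompositions with components in the smaller classes). Second, and for the same reason, the claim that this stage already gives $\hw_0=\kar_{\cu_0}(\bu)$ overstates the classical input: the heart identification is part of the new content of \cite{bonspkar}, and your parenthetical suggestion that Proposition \ref{pbw}(\ref{isplit}) applied to the layers of a weight decomposition settles it does not work as stated --- a weight-zero object is a retract of a $w_{\le 0}$-truncation built as an iterated extension of objects $\bu[i]$ with $i\le 0$, and the strictly negative layers do not lie in $\cu_{w=0}$, so that splitting lemma has nothing to split; showing that these layers can be cancelled is exactly the hard point. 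With those steps attributed to \cite{bonspkar} (as you in fact do), the argument, including your deduction of part 2 from part 1 via Proposition \ref{pbw}(\ref{igenlm}) and boundedness, is sound.
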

\begin{proof}
This is just (most of) Corollary 2.1.2 of \cite{bonspkar}.\end{proof}

\subsection{On dualities and (strict) orthogonality of structures}\label{sdual}

Let us now recall the definition of duality.

\begin{defi}\label{ddual}

Let $\au$ be an abelian category. 

 1. We will call a (covariant) bi-functor   $\Phi:\cu^{op}\times\cu'\to \au$ a {\it duality} if  it is bi-additive, homological with respect
to both arguments, and is equipped with a (bi)natural bi-additive transformation
$\Phi(-,-)\cong \Phi (-[1],-[1])$.


2. Suppose that  $\cu$ is endowed with a weight structure $w$,  $\cu'$ is endowed with a $t$-structure $t$. Then we will say that $w$
 is {\it left  orthogonal} to $t$ and $t$ is {\it right  orthogonal} to $w$ with respect to $\Phi$  if the following  { orthogonality condition} is fulfilled:
  $\Phi (X,Y)=0$ if $X\in \cu_{w\le 0}$ and $Y\in \cu'_{t \ge 1}$ or if $X\in \cu_{w\ge 0}$ and $Y\in \cu'_{t \le -1}$.

3. Assume that $t$ is right  orthogonal to $w$ with respect to $\Phi$. 

Then we will also say that $t$  is {\it $-$-orthogonal} (resp. {\it $+$-orthogonal}) to $w$ (with respect to $\Phi$) if
for any $Y\in \obj \cu'$ we have $Y\in \cu'_{t \le -1}$ (resp. $Y\in \cu'_{t \ge 1}$) whenever $\Phi (X,Y)=0$ for all $X\in \cu_{w\ge 0}$ (resp. $X\in \cu_{w\le 0}$).

Moreover, we will say that $t$  is {\it strictly right  orthogonal} to $w$ and $w$  is {\it strictly left  orthogonal} to $t$ if $t$ is both $-$ and $+$-orthogonal to $w$.



4. We will  write $P_t$ 
for the class of those $X\in \obj \cu$ such that $\Phi (X,Y)=0$ for all $Y\in \cu'_{t \le -1}\cup \cu'_{t \ge 1}$ (cf. \S3 of \cite{zvon}); $P'_t={}^{\perp_{\cu'}} (\cu'_{t\le -1}\cup \cu'_{t\ge 1})$.
\end{defi}

\begin{rema}\label{rdualzero}
1. If $\cu$ and $\cu'$ are  triangulated subcategories of a triangulated category $\du$ then the restriction of the bi-functor $\du(-,-)$ to $\cu\opp\times \cu'$ is obviously a duality. Thus Definition \ref{ddual} is compatible with Definition \ref{dort}. 

More generally, if $i:\cu\to\du$ and $i':\cu'\to \du$ are arbitrary exact functors then $\du(i(-),i'(-))$ is a duality as well. Moreover, this duality is {\it nice} in the sense of Definition 2.5.1(2) of \cite{bger}. However, the notion of niceness is not relevant for the purposes of the current paper. 

2. Certainly, $\Phi=0$ is a duality, and any $w$ and $t$ on the corresponding categories are orthogonal with respect to it. Thus a certain strictness condition appears to be quite actual (at least) in 
 the setting of general dualities.

The importance of this notion is also illustrated by Proposition \ref{phw}(II.4) below (note that it is applied in Theorem \ref{twfromt}(I.3)).\end{rema}

Now let us study the relation between $\hw$ and $\hrt$.

\begin{pr}\label{phw}
Assume that   $t$  is  right  orthogonal to $w$  with respect to $\Phi$.

I.1. Then $\cu_{w=0}\subset P_t$.   Moreover, for any object $M\in P_t$ the functor $\Phi(M,-)$ restricts to an exact functor $E^M:\hrt\to \au$, and we have $\Phi(M,-)\cong E^M\circ H_0^t$.

2. Assume in addition that $t$  is  $-$ or $+$-orthogonal to $w$  (with respect to $\Phi$). Then 
$\{E^M:\ M\in \cu_{w=0}\}$ is a conservative collection of functors $\hrt\to \au$ (cf. Remark \ref{rcons} below).

3. Conversely, assume that functors of the type $E^M$ for $M\in \cu_{w=0}$ form a conservative collection and $t$ is right (resp. left) non-degenerate. Then $t$  is  $-$ (resp. $+$) right  orthogonal to $w$.

II. Assume 
that $\cu\subset \cu'$ and $\Phi$ is the restriction of $\cu'(-,-)$ to $\cu\opp\times \cu'$, i.e., $t$  is  
 right  orthogonal to $w$ in $\cu'$. 

1. Then 
 $\cu_{w\ge 0}= \cu'_{t\ge 0}\cap \obj \cu$,  $\cu_{w\le 0}= \perpp \cu'_{t\ge 1}\cap \obj \cu$, and $\cu_{w=0}=P_t$. 

2. For any $P\in P_t$ we have  natural isomorphisms of functors 
$$\cu'(P,-)\cong \cu'(P,H_0^t(-))\cong \hrt (H_0^t(P), H_0^t(-));$$ 
the first of them is induced by the transformations $ \id_{\cu'}\to t_{\le 0}$ and $H_0^t\to t_{\le 0}$ (see Remark \ref{rtst}(1)). 

 3. Assume that $t$ is cosmashing, $\cu$ equals $\cu'$ and satisfies the dual Brown representability property. Then $H_0^t$ gives an equivalence of (the full subcategory of $\cu$ given by) $P_t$ with the subcategory of projective objects of $\hrt$.

4. If $t$ is $+$-orthogonal to $w$ then $\cu'_{t\ge 0}$ is closed with respect to $\cu'$-products.
\end{pr}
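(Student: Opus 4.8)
The plan is to identify $\cu'_{t\ge 1}$ (equivalently $\cu'_{t\ge 0}$) with a right orthogonality class and then invoke the fact that representable functors convert products into products. In the present setting $\Phi$ is the restriction of $\cu'(-,-)$ to $\cu\opp\times\cu'$. The orthogonality of $t$ and $w$ (Definition \ref{ddual}(2), part of the standing hypotheses of the proposition) says exactly that $\cu'(X,Y)=0$ whenever $X\in\cu_{w\le 0}$ and $Y\in\cu'_{t\ge 1}$, i.e. $\cu'_{t\ge 1}\subseteq\cu_{w\le 0}^{\perp_{\cu'}}$. The extra $+$-orthogonality hypothesis (Definition \ref{ddual}(3)) furnishes the reverse inclusion: any $Y$ with $\Phi(X,Y)=0$ for all $X\in\cu_{w\le 0}$ already lies in $\cu'_{t\ge 1}$, so $\cu_{w\le 0}^{\perp_{\cu'}}\subseteq\cu'_{t\ge 1}$. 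Combining the two, I would record the equality $\cu'_{t\ge 1}=\cu_{w\le 0}^{\perp_{\cu'}}$.

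Next I would use that any right orthogonality class $D^{\perp_{\cu'}}$ is closed with respect to those products that exist in $\cu'$: given a family $\{Y_j\}$ in $D^{\perp_{\cu'}}$ whose product exists and any $X\in D$, the canonical isomorphism $\cu'(X,\prod_j Y_j)\cong\prod_j\cu'(X,Y_j)=0$ forces $\prod_j Y_j\in D^{\perp_{\cu'}}$. Applying this with $D=\cu_{w\le 0}$ shows that $\cu'_{t\ge 1}$ is product-closed. To pass to $\cu'_{t\ge 0}$ I would use that $\cu'_{t\ge 0}=\cu'_{t\ge 1}[-1]$ (Definition \ref{dtstro}(1)) together with the fact that the auto-equivalence $[-1]$ commutes with products; thus a product of objects of $\cu'_{t\ge 0}$ is the $[-1]$-shift of a product in $\cu'_{t\ge 1}$ and so lies in $\cu'_{t\ge 0}$. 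Equivalently, shifting the displayed equality yields directly $\cu'_{t\ge 0}=\cu_{w\le -1}^{\perp_{\cu'}}$ (using $\cu_{w\le 0}[-1]=\cu_{w\le -1}$ and the isomorphism $\cu'(X,Z[1])\cong\cu'(X[-1],Z)$), from which product-closure is immediate.

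I do not anticipate a genuine obstacle; the argument is essentially a matter of keeping track of the direction of the inclusions. The one point deserving emphasis is that bare orthogonality yields only the inclusion $\cu'_{t\ge 1}\subseteq\cu_{w\le 0}^{\perp_{\cu'}}$, which would be insufficient, since a product of objects lying in a subset of a product-closed class need not remain in the subset. It is precisely the $+$-orthogonality assumption that promotes this inclusion to the equality needed for the conclusion.
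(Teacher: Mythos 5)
Your argument, as far as it goes, is correct, but it establishes only part II.4 of the proposition --- the one assertion whose proof the paper dismisses with the single word ``Obvious.'' The statement consists of seven assertions, and your proposal says nothing about the others, which carry essentially all of the content and all of the work in the paper's proof: I.1 (that $\cu_{w=0}\subset P_t$, that $\Phi(M,-)$ restricts to an exact functor $E^M$ on $\hrt$, and that $\Phi(M,-)\cong E^M\circ H_0^t$; this is proved by applying $\Phi(M,-)$ to the distinguished triangles coming from short exact sequences in $\hrt$ and comparing two homological functors that kill $\cu'_{t\le -1}$ and $\cu'_{t\ge 1}$); I.2 (conservativity of $\{E^M:\ M\in\cu_{w=0}\}$, for which the paper invokes Theorem 2.1.2(2) of \cite{bwcp} to pass from vanishing of $\Phi(M,N)$ for all $M\in \cu_{w=0}$ to vanishing for all $M'\in\obj\cu$, and then uses $-$ or $+$-orthogonality to conclude $N=0$); I.3 (the converse, via non-degeneracy of $t$); II.1 (recovery of $w$ from $t$: orthogonality gives $\cu_{w\le 0}\subset \perpp\cu'_{t\ge 1}\cap\obj\cu$ and $\cu_{w\ge 0}\subset\cu'_{t\ge 0}\cap\obj\cu$, and the reverse inclusions follow from Proposition \ref{pbw}(\ref{iort}), whence also $\cu_{w=0}=P_t$); II.2 (the natural isomorphisms $\cu'(P,-)\cong\cu'(P,H_0^t(-))\cong\hrt(H_0^t(P),H_0^t(-))$, cited from \cite{zvon}); and II.3 (lifting projective objects of $\hrt$ to $P_t$, which needs the cosmashing and dual Brown representability hypotheses: $H_0^t$ respects products, so $\hrt(P_0,-)\circ H_0^t$ respects products and is corepresentable). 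None of this can be extracted from your orthogonality-class computation, so the proposal is not a proof of the stated proposition.

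For the record, your treatment of II.4 itself is sound and is precisely the argument the paper suppresses: plain orthogonality gives $\cu'_{t\ge 1}\subset\cu_{w\le 0}^{\perp_{\cu'}}$, the $+$-orthogonality hypothesis gives the reverse inclusion, a class of the form $D^{\perp_{\cu'}}$ is closed under whatever products exist in $\cu'$ because each functor $\cu'(X,-)$ takes products to products, and shifting (using $\cu_{w\le 0}[-1]=\cu_{w\le -1}$) identifies $\cu'_{t\ge 0}$ with $\cu_{w\le -1}^{\perp_{\cu'}}$. Your closing remark --- that the inclusion provided by bare orthogonality alone would not suffice, since a subclass of a product-closed class need not be product-closed --- is exactly the right point to emphasize, and it explains why the $+$-orthogonality hypothesis appears in II.4 at all.
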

\begin{proof}
I.1. $\cu_{w=0}\subset P_t$ immediately from our definitions.

If $0\to A_1\to A_2\to A_3\to 0$ is a short exact sequence in $\hrt$ then $A_1\to A_2\to A_3\to A_1[1]$ is well-known to be a distinguished triangle. Applying the functor  $\Phi(M,-)$ to it and recalling the definition of 
 $P_t$ we obtain an exact sequence  $0=\Phi(M,A_3[-1])\to E^M(A_1)\to E^M(A_2)\to E^M(A_3)\to \Phi(M,A_1[1])=0$; hence $E^M$ is exact indeed. Moreover, the functors $\Phi(M,-)$ and $ E^M\circ H_0^t$  are homological and annihilate both $\cu'_{t\le -1}$ and $\cu'_{t\ge 1}$; hence they are isomorphic.

2. Since all of these functors are exact, it suffices to verify that for any non-zero $N\in \cu'_{t=0}$ there exists $M\in \cu_{w=0}$ such that $\Phi(M,N)\neq 0$. Now, if $\Phi(M,N)=0$ for all $M\in \cu_{w=0}$ then Theorem 2.1.2(2) of \cite{bwcp} easily implies that $\Phi(M',N)=0$ for any $M'\in \obj \cu$. Combining this statement with either $-$ or $+$-orthogonality of $t$ to $w$ we immediately obtain $N=0$ (i.e., a contradiction).

3. If $t$ is right (resp. left)  non-degenerate, it suffices  to verify that $H_i^t(N)=0$ whenever $i<0$ (resp. $i>0$), $N\in \obj \cu'$,   and $\Phi(M,N)=0$ for all $M\in \cu_{w=i}$. For this purpose it is certainly sufficient to check that $H_0^t(N)=0$ whenever
$\Phi(M,N)=0$ for all $M\in \cu_{w=0}$. The latter statement is immediate from our assumption on $\cu_{w=0}$ along with the isomorphism $\Phi(M,-)\cong E^M\circ H_0^t$ provided by assertion I.1.

II.1. 
The argument is rather similar to the proof of Proposition \ref{prefl} (cf. Remark \ref{rhop}). We will use the notation $(C_1,C_2)$ for the couple $(\perpp\cu'_{t\ge 1}\cap \obj \cu,\ \cu'_{t\ge 0}\cap \obj \cu)$.

Since $w$ is left orthogonal to $t$, we have $\cu_{w\le 0}\subset  C_1$ and $\cu_{w\ge 0}\subset  C_2$. Next, $C_1\perp C_2[1]$, and applying Proposition \ref{pbw}(\ref{iort}) we also obtain inverse inclusions. Thus $w=(C_1,C_2)$ indeed; hence $\cu_{w=0}=C_1\cap C_2=P_t$. 

2. All of these 
statements are rather easy; 
 they are given by Lemma 2(1) of \cite{zvon}.

3. We should prove that any projective object $P_0$ of $\hrt$ "lifts" to $P_t$. 

Now, the functor $H_0^t$ respects products according to the easy Lemma 1.4 of \cite{neesat}  (applied in the dual form; cf. also Proposition 3.4(2) of \cite{bvt}); thus the composition $G^{P_0}=\hrt(P_0,-)\circ H_0^t:\cu\to \ab$ respects products as well. Moreover, $G^{P_0}$ is obviously homological functor. Thus it is corepresentable by some $P\in \obj \cu$ that certainly belongs to $P_t$, and it remains to apply the previous assertion.

4. Obvious.
\end{proof}

\begin{rema}\label{rcons}
 Since the functors of the type $E^M$ that we consider in part I of our proposition are exact (on $\hrt$), the conservativity of $\{E^M:\ M\in \cu_{w=0}\}$ is 
 fulfilled if and only if for any non-zero $N\in \cu'_{t=0}$ there exists $M\in \cu_{w=0}$ such that $E^M(N)\neq 0$.
\end{rema}

To  describe an interesting family of orthogonal structures (for $\hrt$ that is not necessarily semi-simple) we need the following definition that is closely related to Definition  D.1.13 of \cite{bvk} (see Remark \ref{rwfilab}(1) below).  

\begin{defi}\label{dwfilab} 
For an abelian category $\au$ we will say that an increasing family of full strict 
abelian subcategories $\au_{\ge i}\subset \au$, $i\in \z$, 
give a {\it nice split filtration}  for $\au$ if $\cup_{i\in \z}\obj\au_{\ge i}=\obj \au$, $\cap_{i\in \z}\obj\au_{\ge i}=\ns$, 
 and there exist exact left adjoints $W_{\ge i}$ to the embeddings  $\au_{\le i}\subset \au$.
\end{defi}

\begin{pr}\label{portss}
Adopt the assumptions of Definition \ref{dwfilab}.

1. Then $\au_{\ge i}$  are actually Serre subcategories of $\au$.

Moreover, the localization functors $\au_{\ge i}/\au_{\ge i+1}\to \au_i $ possess exact left adjoints $l_i$. So we will assume that $\au_i $ are subcategories of $\au$; for an object $N$ of $\au$ and any $j\in \z$ we will write $N_j$ for $l_j(W_{\ge i}M)$.

Furthermore, $\obj\au_i\perp \obj\au_j$ whenever $i\neq j$.

2. Assume that all the categories $\au_i$ are (abelian) semi-simple. Then the category $\au'$ of semi-simple objects of $\au$ consists of finite coproducts of objects of various $\au_i$. 

3. Let $\cu'$ be a triangulated category endowed with a non-degenerate (see Definition \ref{dtstro}(3)) $t$-structure $t$ such that $\hrt=\au$; take $\cu=K^b(\au')$.

Then the pairing $\Phi:\cu\opp\times \cu'\to \ab$ that sends $(M,N)$ into $\bigoplus_{i\in \z}(\bigoplus_{j\in \z} H_i(M)_j, H_{i}^t(N)_j)$, where $H_*(M)$ is the homology of the complex $M$ and  $H_{*}^t(N)$ is the $t$-homology of $N$, is a duality. Moreover, $t$  is  strictly right  orthogonal to  the stupid weight structure $w$ on $\cu$ with respect to $\Phi$.
\end{pr}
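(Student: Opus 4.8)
The plan is to verify in turn the three defining properties of a duality from Definition \ref{ddual}(1), then the orthogonality condition from Definition \ref{ddual}(2), and finally the two strictness conditions from Definition \ref{ddual}(3). Throughout I would use the structural description furnished by parts 1 and 2 of the proposition: the graded pieces $\au_j$ are semisimple, the graded-piece functors $(-)_j=l_j\circ q_j\circ W_{\ge j}:\au\to \au_j$ (where $q_j$ is the Serre localization $\au_{\ge j}\to \au_{\ge j}/\au_{\ge j+1}=\au_j$) are exact as composites of exact functors, and $\obj\au_i\perp\obj\au_j$ for $i\ne j$. I would also record that $\au'$ is a semisimple abelian subcategory of $\au$, so that $\hw=\kar_{\cu}(\au')\simeq \au'$ and the homology $H_i(M)\in\au'$ of any $M\in\cu=K^b(\au')$ is defined and decomposes as $\bigoplus_j H_i(M)_j$ with $H_i(M)_j\in\au_j$. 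Finally, since $t$ is non-degenerate, I would use the standard homological description of its aisles, $\cu'_{t\ge n}=\{N:\ H_i^t(N)=0\ \forall i<n\}$ and $\cu'_{t\le n}=\{N:\ H_i^t(N)=0\ \forall i>n\}$; the stupid weight structure (see Remark \ref{rstws}(1,3)) likewise satisfies $\cu_{w\ge 0}=\{M:\ H_i(M)=0\ \forall i<0\}$ and $\cu_{w\le 0}=\{M:\ H_i(M)=0\ \forall i>0\}$.

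For the duality axioms, bi-additivity is immediate. Homologicity in the second argument I would deduce as follows: a distinguished triangle in $\cu'$ yields a long exact sequence of $t$-homology objects in $\au$; applying the exact functor $(-)_j$ gives a long exact sequence in $\au_j$; and since $\au_j$ is semisimple, $\au_j(H_i(M)_j,-)$ is exact, so applying it and summing over $i,j$ produces the required long exact sequence. Homologicity in the first argument is dual: $H_i$ is homological on $K^b(\au')$, $(-)_j$ is exact, and $\au_j(-,H_i^t(N)_j)$ is exact by semisimplicity. For the shift compatibility I would use $H_i(M[1])=H_{i-1}(M)$ and $H_i^t(N[1])=H_{i-1}^t(N)$; reindexing $i\mapsto i-1$ then gives a binatural isomorphism $\Phi(-,-)\cong\Phi(-[1],-[1])$, completing the proof that $\Phi$ is a duality.

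The orthogonality condition I would check by comparing homological supports: for $M\in\cu_{w\le 0}$ (so $H_i(M)=0$ for $i>0$) and $N\in\cu'_{t\ge 1}$ (so $H_i^t(N)=0$ for $i\le 0$), every summand $\au_j(H_i(M)_j,H_i^t(N)_j)$ vanishes, and symmetrically for $M\in\cu_{w\ge 0}$, $N\in\cu'_{t\le -1}$. For strictness, I would feed in the test objects $S[i]$ with $S\in\au_j$: since $H_k(S[i])$ is concentrated in degree $i$, one computes $\Phi(S[i],N)=\au_j(S,H_i^t(N)_j)$, and $S[i]\in\cu_{w\ge 0}$ exactly when $i\ge 0$ (resp. $S[i]\in\cu_{w\le 0}$ when $i\le 0$). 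Thus if $\Phi(M,N)=0$ for all $M\in\cu_{w\ge 0}$, then $\au_j(S,H_i^t(N)_j)=0$ for all $i\ge 0$, all $j$, and all simple $S\in\au_j$; semisimplicity of $\au_j$ forces $H_i^t(N)_j=0$, and the exhaustiveness and separatedness of the filtration (via the exactness and adjunction properties of $W_{\ge j}$ and $l_j$ from part 1) force $H_i^t(N)=0$ for all $i\ge 0$; non-degeneracy of $t$ then gives $N\in\cu'_{t\le -1}$, i.e. $t$ is $-$-orthogonal to $w$. The $+$-orthogonality is proved identically with $i\le 0$, yielding $N\in\cu'_{t\ge 1}$, so $t$ is strictly right orthogonal to $w$.

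The only genuinely delicate point is the homologicity of $\Phi$: a naive pairing by $\au(H_i(M),H_i^t(N))$ would fail, since $\au(X,-)$ is merely left exact. It is precisely the passage to the semisimple graded pieces $\au_j$ — where every short exact sequence splits and the relevant Hom-functors become exact — that makes $\Phi$ homological in both variables. The remaining steps (support-disjointness for orthogonality, detection of graded $t$-homology by the test objects $S[i]$ for strictness) are then routine, with non-degeneracy of $t$ entering only to convert vanishing of $t$-homology into membership in the $t$-aisles.
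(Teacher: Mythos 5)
Your argument for part 3 is correct and follows essentially the same route as the paper's: exactness of the graded-piece functors plus semisimplicity of the $\au_j$ make $\Phi$ homological in both variables, and strictness comes from detecting $t$-homology via objects of the $\au_j$ together with non-degeneracy of $t$ (the paper packages this last step as an appeal to Proposition \ref{phw}(I.3), and obtains cohomologicity in the first variable from the splitting of all distinguished triangles in the semisimple category $\cu=K^b(\au')$ rather than from the homology long exact sequence on $K^b(\au')$, but these are cosmetic differences). The only caveat is that parts 1 and 2, which you presuppose, belong to the statement being proved; the paper disposes of them by citing \cite{bvk} (exactness of $W_{\ge i}$ makes the $\au_{\ge i}$ Serre subcategories, the dual of Lemma D.1.15 of ibid. yields the exact adjoints $l_i$, and Proposition D.1.16(1) of ibid. identifies the semisimple objects of $\au$), so a complete write-up should include those references or arguments.
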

\begin{proof}
1. The exactness of $W_{\ge i}$ easily implies that $\au_{\ge i}$ are Serre subcategories of $\au$  indeed (cf. Remark D.1.20 of \cite{bvk}). Hence the existence and exactness of $l_i$ is  given by Lemma D.1.15 of ibid. (applied in the dual form; see  Remark \ref{rwfilab}(1)).

2. We should prove that simple objects of $\au$ are precisely the elements of $\cup \obj \au_i$.  This is immediate from Proposition D.1.16(1) of ibid.

3. The choice of the isomorphism $\Phi(-,-)\cong \Phi (-[1],-[1])$ is obvious. Next, $\cu$ is semi-simple; hence any its object 
 is a sum of shifts of objects of $\au_i$. Combining this fact with the exactness of all $l_j\circ W_{\ge j}:N\mapsto N_j$ along with the semi-simplicity of $\au_j$ we obtain that the functor $\Phi(M,-)$ is homological for any $M\in \obj \cu$. Moreover, the semi-simplicity of $\cu$ means that any distinguished triangle in it is a sum  of rotations of triangles of the form $M\to M\to 0\to M[1]$; this observation easily implies that the functor $\Phi(-,N)$ is cohomological for any object $N$ of $\cu'$.

Furthermore,  $t$  is  obviously  right  orthogonal to   $w$ with respect to $\Phi$. Since $t$ is non-degenerate, to verify strictness it suffices to check that for any  non-zero $N\in \cu'_{t=0}$ there exists $M\in \cu_{w=0}$ such that $\Phi(M,N)\neq 0$. The latter statement is an easy consequence of the definition of $\Phi$; see Proposition D.1.16(3) of loc. cit. 
\end{proof}

\begin{rema}\label{rwfilab}

1. The difference of our definition \ref{dwfilab} from the definition of a {\it weight filtration} in D.1.14 of ibid. is that we reverse the arrows and change the sign of inequalities (for $\au_{\le i}$).

2. Assume that endomorphism rings of simple objects of $\au$ are commutative (and so, they are fields). Then one can easily "replace" the duality $\Phi$ in Proposition \ref{portss}(3) by a duality $\Phi':\cu\opp\times \cu\to \au'$ such that $\Phi'(M,N)=M$ whenever $N=M$ are simple objects of $\au'\subset \au$ and $\Phi'(M,N)=0$ if $M$ and $N$ are non-isomorphic simple objects of $\au'$.

3. Certainly, we could have taken $\cu=K(\au')$ instead of $\cu=K^b(\au')$ in Proposition \ref{portss}(3); one can also take any intermediate triangulated category here.
\end{rema}

\subsection{Existence of 
 orthogonal weight structures on subcategories}\label{sconstrw}


Now we study the question which weight structures are adjacent to weight structures; yet in certain cases we are only able to construct a weight structure on a subcategory of the corresponding category.


\begin{theo}\label{twfromt}
Let $t$ be a $t$-structure on $\cu'$.

I. Assume that there exists a weight structure $w$ on a triangulated category $\cu\subset \cu'$ that is left orthogonal to $w$ (in $\cu'$; here we use Definition \ref{dort}(1)) and  $\cu'_{t=0}\subset \obj \cu$.

1. Then there are enough projectives in $\hrt$, 
 for any $M\in  \cu'_{t=0}$ there exists an $\hrt$-epimorphism from 
 $H_0^t(P)$ into $ M$ for some $P\in \cu_{w=0}$,   and the functor $H_0^t$  induces  an equivalence of $\kar(\hw)$
  with the category of projective objects of $\hrt$.  

2. Moreover, $\hw$ is equivalent to the latter category whenever the class $\obj\cu$ is retraction-closed in $\cu'$ and $\cu'$ is Karoubian.

3. Assume in addition that $t$ is left non-degenerate. Then $t$ is  $+$-orthogonal to $w$; hence $\cu'_{t\le 0}$ is closed with respect to $\cu'$-products. 

II. Assume that there are enough projectives in $\hrt$ and for any projective object $P'$ of $\hrt$ there exists $P\in P'_t$ (see Definition \ref{ddual}(4)) along with an  
$\hrt$-epimorphism $H_0^t(P)\to P'$.

1. Then the full subcategory $\cu$ of $\cu'$ whose object class equals $\cup_{i\in \z}\cu'_{t\ge i}$ is triangulated, and 
there exists a weight structure $w$ on $\cu$  
such that  $\cu_{w\ge 0}=\cu'_{t\ge 0}$ and $t$ is $-$-orthogonal to $w$. 

 2. 
Furthermore, one can 
 extend  (see Definition \ref{dwso}(\ref{idrest})) $w$ as above to a weight structure $w'$ on $\cu'$ that  is left adjacent to $t$ 
 whenever any of the following additional assumptions is fulfilled:

a.  $t$ is bounded below (see Definition \ref{dtstro}(4)).  
 
b. There exists 
an integer $n$ such that $\cu_{t\le 0}\perp \cu_{t\ge n}$. 
 
III. Assume in addition that $\cu'$ satisfies the dual Brown representability property (see Definition \ref{dcomp}(\ref{idbrown})), $t$ is cosmashing and $\hrt$ has enough projectives. Then the category $\cu'$ is smashing,  there exists a weight structure $\wu$ on the localizing subcategory $\ccu$ of $\cu'$ that is generated by $\cu'_{t\le 0}$ such that  $\cu_{\wu\le 0}=\cu'_{t\le 0}$, and $\hwu$ is equivalent to the subcategory of  projective objects of $\hrt$.

IV. Assume that $R$ is a commutative unital coherent ring, the category $\cu'{}\opp$ is $R$-saturated, $t$ is bounded, and $\hrt$ has enough projectives. Then there exists a weight structure $w'$ on $\cu'$ that is left adjacent to $t$. 
\end{theo}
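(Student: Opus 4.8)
The plan is to deduce the statement from part II of the theorem, verifying its hypotheses by means of the $R$-saturatedness of $\cu'\opp$. Recall that part II requires $\hrt$ to have enough projectives (which is assumed) and every projective object of $\hrt$ to lift to an object of $P'_t$ (see Definition \ref{ddual}(4)); once this is established, note that the boundedness of $t$ gives $\cup_{i\in\z}\cu'_{t\ge i}=\obj\cu'$, so the subcategory $\cu$ of part II.1 is all of $\cu'$, and part II.1 directly produces a weight structure $w'$ on $\cu'$ with $\cu'_{w'\ge0}=\cu'_{t\ge0}$. By Proposition \ref{portadj} this equality is precisely the assertion that $w'$ is left adjacent to $t$. (One may instead invoke part II.2(a), as $t$ is bounded below.) The whole argument parallels the proof of part III and of Proposition \ref{phw}(II.3), with $R$-saturatedness playing the role that the dual Brown representability property plays there.

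So fix a projective object $P_0$ of $\hrt$ and consider the $R$-linear functor $G^{P_0}=\hrt(P_0,-)\circ H_0^t:\cu'\to R-\modd$ (see Remark \ref{rtst}(2)). Since $H_0^t$ is homological and $\hrt(P_0,-)$ is exact ($P_0$ being projective), $G^{P_0}$ is homological, i.e. a cohomological functor on $\cu'\opp$. I claim it is finite in the sense of Definition \ref{dsatur}(1). For the finite presentation of its values, observe that $\hrt$ is a full subcategory of $\cu'$, so $G^{P_0}(M)=\cu'(P_0,H_0^t(M))$ is a morphism $R$-module of $\cu'$; since $\cu'\opp$ is $R$-saturated, every representable functor on it is finite, whence all morphism $R$-modules $\cu'(N,M)$ are finitely presented (cf. also Proposition \ref{pfp}(\ref{ipfp3})). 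For the vanishing conditions, note that $G^{P_0}(M[i])=\hrt(P_0,H^t_{-i}(M))$; since $t$ is bounded we have $H^t_j(M)=0$ for $|j|\gg0$, so $G^{P_0}(M[i])=0$ for $|i|\gg0$. Thus $G^{P_0}$ is both locally and anti-locally finite, and the $R$-saturatedness of $\cu'\opp$ provides an object $P\in\obj\cu'$ with a natural isomorphism $\cu'(P,-)\cong G^{P_0}$.

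It remains to identify $P$. For $Y\in\cu'_{t\ge1}\cup\cu'_{t\le-1}$ we have $H_0^t(Y)=0$, hence $\cu'(P,Y)\cong G^{P_0}(Y)=0$; thus $P\in P'_t$, and in particular $P\in\perpp\cu'_{t\le-1}=\cu'_{t\ge0}$ by Remark \ref{rtst}(3). For $P\in\cu'_{t\ge0}$ the truncation triangle $t_{\ge1}P\to P\to H_0^t(P)\to(t_{\ge1}P)[1]$, together with the orthogonality axiom $\cu'_{t\ge1}\perp\cu'_{t\le0}$, yields a natural isomorphism $\cu'(P,N)\cong\cu'(H_0^t(P),N)=\hrt(H_0^t(P),N)$ for every $N\in\hrt$. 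Comparing this with $\cu'(P,N)\cong G^{P_0}(N)=\hrt(P_0,N)$ and applying the Yoneda lemma in $\hrt$, we get $H_0^t(P)\cong P_0$ with $P\in P'_t$. This verifies the hypothesis of part II (the resulting isomorphism being the required epimorphism), and the conclusion follows as explained above.

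The main obstacle is the finiteness of $G^{P_0}$, and the crucial observation is that $R$-saturatedness of $\cu'\opp$ does double duty: it first forces every morphism $R$-module of $\cu'$ — in particular each value $\cu'(P_0,H_0^t(M))$ — to be finitely presented, and then supplies the corepresenting object $P$. Everything else is a routine manipulation of $t$-truncations, the only point requiring care being the Yoneda identification $H_0^t(P)\cong P_0$, which rests on the fact that $P$ automatically lies in $\cu'_{t\ge0}$.
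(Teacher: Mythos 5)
You prove only part IV of the theorem: parts I--III are nowhere addressed, and your argument moreover takes part II.1 as given. If the target is the entire statement this is a substantial omission; however, your reliance on II.1 mirrors the paper's own proof of part IV (which is deduced from II.1 in exactly the same way), so I assess part IV on its own terms.

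For part IV your argument is correct and follows essentially the same route as the paper. Both proofs reduce the hypotheses of II.1 to producing, for each projective object $P_0$ of $\hrt$, an object $P\in P'_t$ with $H_0^t(P)\cong P_0$; both obtain $P$ by corepresenting the functor $G^{P_0}=\hrt(P_0,-)\circ H_0^t$ via the $R$-saturatedness of $\cu'{}\opp$ (the boundedness of $t$ gives the two vanishing conditions, and saturatedness itself gives the finite presentation of the values, since it forces all Hom-modules of $\cu'$ to be finitely presented); and both conclude by noting that boundedness forces the category $\cu$ of II.1 to coincide with $\cu'$, so that left orthogonality becomes left adjacency (your appeal to Proposition \ref{portadj} and the paper's appeal to Definition \ref{dort}(3) are interchangeable here). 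The one genuine difference is localized: the paper simply declares, citing Proposition \ref{phw}(II.2), that corepresentability of $G^{P_0}$ is \emph{equivalent} to the existence of the required $P$, whereas you supply the nontrivial direction of that equivalence explicitly --- the isomorphism $\cu'(P,-)\cong G^{P_0}$ kills $\cu'_{t\le -1}\cup\cu'_{t\ge 1}$, hence $P\in P'_t\subset \cu'_{t\ge 0}$; then the truncation triangle $t_{\ge 1}P\to P\to H_0^t(P)$ together with the Yoneda lemma in $\hrt$ identifies $H_0^t(P)$ with $P_0$. This spelled-out identification is exactly what the paper's citation hides, so at that point your version is, if anything, more complete.
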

\begin{proof}

I.1. 
Fix $M\in \cu_{t=0}$ and consider its 
$w$-decomposition $P\stackrel{p}{\to} M\to w_{\ge 1}M\to P[1]$. Since $M\in \cu'_{t= 0}$, Proposition \ref{phw}(II.1) implies that $P$ belongs to $\cu_{w\ge 0}$; hence $P $ belongs to $ \cu_{w=0}$ according to Proposition \ref{pbw}(\ref{iwd0})). Next,  $P\in  \cu_{w\ge 0}\subset \cu'_{t\ge 0}$ (see Proposition \ref{phw}(II.1)); hence  the object  $P_0=t_{\le 0}P$  equals  $H_0^t(P)$ (see Remark \ref{rtst}). Therefore $P_0$ is projective in $\hrt$ according to Proposition \ref{phw}(II.2).

Next, the adjunction property for the functor $t_{\le 0}$ (see Remark \ref{rtst}(1)) implies that $p$ factors through the $t$-decomposition 
morphism $P\to P_0$.  Now we check that the corresponding morphism $P_0\to M$ is an $\hrt$-epimorphism. This is certainly equivalent to its cone $C$ belonging to $\cu_{t\ge 1}$. The octahedral axiom of triangulated categories gives a distinguished triangle $(t_{\ge 1}P)[1]\to w_{\ge 1}M\to C\to (t_{\ge 1}P)[2]$; it yields the assertion in question since $w_{\ge 1}M\in \cu_{w\ge 1}\subset \cu'_{t\ge 1}$ 
 and the class $\cu_{t\ge 1}$ is extension-closed. Thus we obtain that $\hrt$ has enough projectives.

Now, the category of projective objects of $\hrt$ is certainly Karoubian. As we have just verified,   for any projective object $Q$ of $\hrt$ there exists an $\hrt$-epimorphism $H_0^t(S)\to Q$ for some $S\in \cu_{w=0}$. Since $H_0^t(S)$ is projective in $\hrt$ according  to Proposition \ref{phw}(II.2), this epimorphism splits, i.e.,   $Q$ equals the image of some idempotent endomorphism of $H_0^t(S)$. Applying Proposition \ref{phw}(II.2) once again and lifting this endomorphism to $\hw$ we obtain that $\kar(\hw)$ is equivalent to the category of projective objects of $\hrt$ indeed. 

2. 
 Since $\cu_{w=0}$ is  retraction-closed in $\cu'$, $\hw$ is Karoubian as well. Hence $\hw\cong \kar(\hw)$ in this case and we obtain the result in question.

3. If $t$ is  $+$-orthogonal to $w$ then $\cu'_{t\le 0}$ is closed with respect to $\cu'$-products according to Proposition \ref{phw}(II.4). 

Applying Proposition \ref{phw}(I.3) we obtain that it remains to verify that the functors of the form $E^M$ for $M\in \cu_{w=0}$ give a conservative family of functors $\hrt\to \ab$. Now, for any object $N$ of $\hrt$ our assumptions give the existence of a projective object $P_0$ of $\hrt$ that surjects onto it. Moreover, applying Proposition \ref{phw}(II.2) we obtain the existence of $P\in \cu_{w=0}$ and a morphism $h$ from $P$ 
  such that $H_0^t(h)$ is isomorphic to this surjection $P_0\to N$. Hence $E^P(N)\neq 0$ if $N$ is non-zero, and we obtain the conservativity in question (see Remark \ref{rcons}).

II.1.  
$\cu$ is  triangulated since the functor $H_0^t:\cu'\to \hrt$ is homological. Next we take $C_1=\perpp \cu'_{t\ge 1}\cap \obj \cu$, $C_2=\cu_{t\ge 0}$, and prove that $(C_1,C_2)$ is a weight structure on $\cu$ (cf. Proposition \ref{phw}(II.1)).

The only non-trivial axiom check here is the existence of $w$-decompositions for all objects of $\cu$. 
 Let us verify the existence of a $w$-decomposition for any $M\in \cu_{t\ge i}$ by induction on $i$. The statement is obvious for $i> 0$ since $M\in \cu_{w\ge 1}=\cu_{t\ge 1}$ and we can take a "trivial" weight decomposition $0\to M\to M\to 0$. 

Now assume that existence of $w$-decompositions is known for any $M\in \cu_{t\ge j}$ for some $j\in \z$. We should verify the existence of weight decomposition of an element $N$ of $\cu_{t\ge j-1}$. Certainly, $N$ is an extension of $N'[-j-1]=H_0^t(N[j+1])[-j-1]$   by $t_{\ge j}N$ (see Remark \ref{rtst}(1) for the notation). Since the latter object possesses a weight decomposition, Proposition \ref{pstar}(I) (with $A=C_1$ and $B=C_2[1]$) allows us to verify the existence of a weight decomposition of $N'[-j-1]$ (instead of $N$). Our assumptions imply that there exists an epimorphism $H_0^t(P)\to N'$ with $P\in P'_t=C_1\cap C_2$. 
 Then a cone $C $ of the corresponding composed morphism  $P\to N'$ is easily seen to belong to $\cu'_{t\ge 1}$. Since both $P$ and $C$ possess weight decompositions, applying Proposition \ref{pstar}(I) once again we obtain the statement in questions.

 Lastly, $t$ is $-$-orthogonal to $w$ immediately from Remark \ref{rtst}(3).

2. If the assumption a is fulfilled then we can just take $w'=w$ since  $\cu$ obviously equals $\cu'$.

Now suppose that assumption b is fulfilled. Similarly to the previous proof, 
  it suffices to verify that for the couple 
$w'=
 (\perpp \cu'_{t\ge 1},\cu'_{t\ge 0})$ the corresponding $w'$-decompositions exist for all objects of $\cu'$.

 Since $w'$ is an extension of $w$, assertion II.1 gives the existence of $w'$-decompositions for all elements of $\cu'_{t\ge 2-n}$. Next, our  orthogonality assumption on $t$ yields that $\cu'_{t\le 1-n}\subset \cu_{w\le 0}$; hence one can take trivial 
 $w'$-decompositions for elements of  $\cu'_{t\le 1-n}$. It remains to note that $\obj \cu'=\cu'_{t\ge 2-n}\star \cu'_{t\le 1-n}=\obj \cu'$ by axiom (iv) of weight structures, and apply Proposition \ref{pstar}(I) once again.

III. $\cu'$ is smashing according to  Proposition 8.4.6 of  \cite{neebook} (applied in the dual form). Applying Proposition \ref{phw}(II.2--3) we obtain that $\cu'$ and $t$ satisfy the assumptions of assertion II.1. We argue similarly to its proof and verify that  the corresponding $(\tilde{C}_1,\tilde{C}_2)$ give a weight structure $\wu$ on $\ccu$. Once again, for this purpose it suffices to verify that the class $\tilde{C}=\tilde{C}_1\star \tilde{C}_2[1]$ equals $\obj \ccu$. Immediately from assertion II, $\tilde{C}$ contains $\cu'_{t\ge j}$ for all $j\in \z$. Moreover, $\tilde{C}$ is extension-closed and closed with respect to $\cu'$-coproducts according to Proposition \ref{pstar}(I, II.1); hence $\tilde{C}$ equals $\obj \ccu$ indeed.

Lastly, $\obj\ccu$ is retraction-closed in $\cu'$ and $\cu'$ is Karoubian according to  Proposition \ref{pstar}(II.2); hence $\hwu$ is
 equivalent to the subcategory of  projective objects of $\hrt$ according to assertion I.2.

IV. We want to apply assertion II.1 in this case; to check its assumptions it certainly suffices to verify that for any projective object $P'$ of $\hrt$ there exists $P\in P'_t$ 
such $P'\cong H_0^t(P)$. According to Proposition \ref{phw}(II.2), the latter statement is equivalent to the corepresentability of the functor $G^{P_0}=\hrt(P_0,-)\circ H_0^t:\cu\to R-\modd$ (cf. the proof of Proposition \ref{phw}(II.3)). Now, the values of $G^{P_0}$ are finitely presented $R$-modules (since morphisms between any two objects of $\cu$; here we also apply Proposition \ref{pfp}(\ref{ipfp3})). Since $t$ is bounded, $G^{P_0}$ is finite in the sense of Definition \ref{dsatur}(1), and we obtain the corepresentability in question (see Definition \ref{dsatur}(2)).

Lastly, the corresponding category $\cu$ equals $\cu'$ since $t$ is bounded (cf. assertion II.2; one can also apply its formulation directly). Since the resulting weight structure $w'=w$ is left orthogonal to $t$, it is also left adjacent to it (see Definition \ref{dort}(3)). 
\end{proof}

\begin{rema}\label{rtfindim}
1. Certainly, part III of our theorem becomes more interesting in the case $\cu=\cu'$.

2. Moreover, parts I and III of our theorem can be considered as a certain complement to Theorem \ref{tsmash}(I). So we obtain that the class of 
 $t$-structures right adjacent to smashing weight ones is "closely related" to the one of cosmashing $t$-structures such that $\hrt$ has enough projectives.

3. Similarly, parts I, II, and IV of our theorem complement Corollary \ref{csatur}(2,3); see Remark \ref{rtem1}(\ref{irecw}). 

It is also an interesting question whether for a general $R$-saturated  category $\cu$ a weight structure $w$ that is left adjacent to a $t$-structure $t$ is bounded if and only if $t$ is.

4. The condition $\cu_{t\le 0}\perp \cu_{t\ge n}$ for $n\gg 0$ (see part II.2 of our theorem) is a natural generalization of the finiteness of the cohomological dimension condition (for an abelian category).

5. In parts II and III of our theorem the "starting points" for  constructing weight decompositions were the classes $\cu'_{t\ge 0}$ and $P'_t$. Now, 
 it is easily seen that one can construct a weight structure starting from $P_t\subset \obj \cu$ if $\cu\opp$ reflects $\cu'{}\opp$ (in a certain $\du\opp$ or with respect to a duality) using Proposition \ref{pcneg}. Certainly, one will also need certain corepresentability assumptions to prove that $P_t$ is large enough in some sense (if it is).
\end{rema}

\begin{pr}\label{psatw}

Assume that $R$ is a commutative unital coherent ring, $\cu$ and $\cu'$ are $R$-linear triangulated subcategories of an $R$-linear triangulated category $\du$,  
  the corresponding functor $M\mapsto H^M_{\cu'}$ gives an equivalence of $\cu\opp$ with the category of finite homological functors from $\cu'$ into $R-\modd$ (i.e., the functors of the type $H^M$ are finite as $R$-linear functors from $\cu'{}\opp$ into $R-\modd$), and $t$ is a bounded $t$-structure on $\cu'$ such that $\hrt$ is equivalent to the category of $R$-linear functors from an $R$-linear category $\hu$ into $R-\mmodd$. 

Then there exists a bounded weight structure $w''$ on the subcategory $\cu''=\lan P_t\ra$ of $\cu$ such that $\hw''= P_t\supset \hu$ and 
 $w''$ is strictly left orthogonal to $t$.
\end{pr}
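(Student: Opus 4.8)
The plan is to build $w''$ from Proposition \ref{pcneg}, taking the negative subcategory to be $P_t$ itself, and then to verify each clause of the conclusion in turn. I work throughout with the duality $\Phi=\du(-,-)$ restricted to $\cu\opp\times\cu'$, so that (Definition \ref{ddual}(4)) $P_t$ is the class of $X\in\obj\cu$ for which the $\du$-Yoneda functor $H^X_{\cu'}$ annihilates $\cu'_{t\le -1}\cup\cu'_{t\ge 1}$. First I would record two soft facts. Since $P_t$ is the intersection with $\obj\cu$ of a left orthogonal, it is additive and retraction-closed in $\cu$. Moreover, by the argument proving Proposition \ref{phw}(I.1), every $H^X_{\cu'}$ with $X\in P_t$ is ``concentrated in $t$-degree $0$'' and hence factors as $E^X\circ H_0^t$ for an exact $E^X\colon\hrt\to R-\modd$.

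The main step is to show that $P_t$ is negative in $\cu$ (Definition \ref{dwso}(\ref{id6})). Given $X,Y\in P_t$ and $i>0$, the hypothesis that $M\mapsto H^M_{\cu'}$ is an equivalence of $\cu\opp$ onto the finite homological functors identifies $\cu(X,Y[i])$ with $\mathrm{Nat}(H^{Y[i]}_{\cu'},H^X_{\cu'})$. Here $H^X_{\cu'}$ kills $\cu'_{t\ge 1}$, while the computation $H^{Y[i]}_{\cu'}(M)=H^Y_{\cu'}(M[-i])$ shows $H^{Y[i]}_{\cu'}$ kills $\cu'_{t\le i-1}\supseteq\cu'_{t\le 0}$. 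I would then run the usual argument that transformations between functors supported in distinct degrees vanish: for $\eta\colon H^{Y[i]}_{\cu'}\to H^X_{\cu'}$ and any $M$, naturality along $M\to t_{\le 0}M$, together with $H^{Y[i]}_{\cu'}(t_{\le 0}M)=0$ and the isomorphism $H^X_{\cu'}(M)\xrightarrow{\sim}H^X_{\cu'}(t_{\le 0}M)$, forces $\eta_M=0$. Thus $\cu(X,Y[i])=0$, which is negativity.

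With negativity established, Proposition \ref{pcneg} applied to $\bu=P_t$ inside $\cu''=\lan P_t\ra$ yields a \emph{bounded} weight structure $w''$ with $\hw''=\kar_{\cu''}(P_t)$; as $P_t$ is already additive and retraction-closed in $\cu$, this equals $P_t$, giving $\hw''=P_t$. Left orthogonality of $w''$ to $t$ I would verify on generators: $\cu''_{w''\le 0}$ (resp. $\cu''_{w''\ge 0}$) is the envelope of $\{P[j]:P\in P_t,\ j\le 0\}$ (resp. $j\ge 0$), and for such a $P[j]$ and $Z\in\cu'_{t\ge 1}$ (resp. $Z\in\cu'_{t\le -1}$) the identity $\du(P[j],Z)=\du(P,Z[-j])$ lands in the defining vanishing of $P_t$.

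It remains to embed $\hu$ and to upgrade orthogonality to strictness. Writing $\Psi\colon\hrt\xrightarrow{\sim}\adfur(\hu,R-\mmodd)$ for the given equivalence, for each $h\in\hu$ I would form $G_h=\mathrm{ev}_h\circ\Psi\circ H_0^t\colon\cu'\to R-\modd$: it is homological, concentrated in $t$-degree $0$, has finitely presented values, and—because $t$ is bounded—vanishes on $M[j]$ for $|j|\gg 0$, hence is finite. The equivalence then furnishes $P_h\in P_t$ with $H^{P_h}_{\cu'}\cong G_h$, so $E^{P_h}\cong\mathrm{ev}_h\circ\Psi$, and $h\mapsto P_h$ is a full embedding $\hu\hookrightarrow P_t=\hw''$. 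Since any nonzero $N\in\hrt$ has $\Psi(N)(h)\neq 0$ for some $h$, we get $E^{P_h}(N)\neq 0$, so $\{E^M:M\in P_t\}$ is conservative; as $t$ is bounded and thus non-degenerate, Proposition \ref{phw}(I.3) makes $t$ both $-$ and $+$-orthogonal to $w''$, i.e. $w''$ is strictly left orthogonal to $t$. The hard part is the negativity step together with the construction of the $P_h$: both convert the abstract equivalence $\cu\opp\simeq$ (finite functors) into representability statements and require controlling natural transformations between functors supported in different $t$-degrees, with the finiteness bookkeeping—where boundedness of $t$ is essential—legitimizing the corepresentability of $G_h$.
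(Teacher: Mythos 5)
Your proposal is correct and takes essentially the same route as the paper's own proof: negativity of $P_t$ via the vanishing of natural transformations between corepresented functors concentrated in disjoint $t$-ranges (the paper runs the truncation square with $t_{\ge i}$ on the source functor where you use $t_{\le 0}$ on the target one — mirror versions of the same argument), then Proposition \ref{pcneg} applied to $\bu=P_t$ inside $\lan P_t\ra$, left orthogonality checked on the envelope generators $P[j]$, and strictness obtained by corepresenting the finite homological functors $\mathrm{ev}_h\circ\Psi\circ H_0^t$ (with boundedness of $t$ ensuring finiteness) and feeding the resulting conservativity into Proposition \ref{phw}(I.3). The only cosmetic difference is that your strictness step is written more explicitly than the paper's, which denotes your $G_h$ by $G^{N}=\hrt(N,-)\circ H_0^t$ without spelling out the Yoneda-type identification.
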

\begin{proof}
First we prove that $P_t$ gives a negative subcategory of $\cu$. So we fix $A,B\in P_t$, $i\ge 0$, and prove that $A\perp B[i]$ by an argument somewhat similar to the proof of Proposition \ref{pwrange}(\ref{iwruni}).

So $T$ be a transformation $H^{B[i]}_{\cu'}\to H^{A}_{\cu'}$. We can obviously complete it to a commutative square 
$$\begin{CD} 
H^{B[i]}_{\cu'}\circ t_{\ge i}@>{}>>H^{A}_{\cu'}\circ t_{\ge i} \\
@VV{T'}V@VV{}V \\
H^{B[i]}_{\cu'}@>{T}>>H^{A}_{\cu'}\end{CD} $$ 
Since the transformation $T'$ is an isomorphism and $H^{A}_{\cu'}\circ t_{\ge i}=0$, we obtain $T=0$.

Applying Proposition \ref{pcneg} we obtain the existence of a bounded weight structure $w''$ on the category $\cu''=\lan P_t\ra$ such that $\cu''_{w''=0}=\kar_{\cu}P_t=P_t$. Next, the description of  $\cu''_{w''\le 0}$ and $\cu''_{w''\ge 0}$ provided by this proposition easily implies that $w''$ is left orthogonal to $t$. 

Now, $t$ is non-degenerate since it is bounded. Applying Proposition \ref{phw}(I.3) we obtain that to finish the proof and verify strong orthogonality it suffices to verify that any object $M$ of $\hu$ lifts to an element of $P_t$. Now  
 we argue similarly to the proof of Proposition \ref{phw}(II.3). The functor $G^{N}=\hrt(N,-)\circ H_0^t:\cu\to R-\modd$ is homological; it takes values in finitely presented $R$-modules immediately from Proposition \ref{pfp}(\ref{ipfp3}).  Since $t$ is bounded, it is finite as a functor from $\cu'{}\opp$ into $R-\modd$, and we obtain that it is corepresentable by an object of $\cu$.
\end{proof}

\begin{rema}\label{recw}
Now assume that the $t$-structure $t$ in our proposition is orthogonal to a weight structure $w$ on the whole category $\cu$; cf. Remark \ref{rtem1}(\ref{irecwo}). Then $P_t$ contains $\cu_{w=0}$; hence $\cu''=\cu$ (see Proposition \ref{pcneg}(1)). Moreover,  Propositions \ref{pbw}(\ref{iuni}) and \ref{pcneg}(1) easily imply that $w''=w$. 

However, one does not have $\cu''=\cu$ in general. In particular, one can easily construct an example with $\cu\neq 0$ and $\cu'=0$; in this case we certainly have $\cu''=0\neq \cu$.
\end{rema}

\end{document}